\newcommand{\beq}{\begin{equation}}
\newcommand{\eeq}{\end{equation}}
\numberwithin{equation}{section}
\newtheorem{theorem}{Theorem}[section]
\newtheorem{definition}[theorem]{Definition}
\newtheorem{lemma}[theorem]{Lemma}
\newtheorem{remark}[theorem]{Remark}
\newcommand{\RP}[1]{\mathbb{RP}^{#1}}
\newcommand{\tr}{\mathrm{tr}\,}
\newcommand{\bm}{{\bf m}}
\newcommand{\thmref}[1]{Theorem~\ref{#1}}
\newcommand{\lemref}[1]{Lemma~\ref{#1}}
\newcommand{\R}[1]{\mathbb{R}^{#1}}
\newcommand{\eps}{\varepsilon}
\newcommand{\abs}[1]{\left\vert{#1}\right\vert}
\def\intav#1{\mathchoice
          {\mathop{\vrule width 6pt height 3 pt depth -2.5pt
                  \kern -9pt \intop}\nolimits_{\kern -6pt#1}}%
          {\mathop{\vrule width 5pt height 3 pt depth -2.6pt
                  \kern -6pt \intop}\nolimits_{#1}}%
          {\mathop{\vrule width 5pt height 3 pt depth -2.6pt
                  \kern -6pt \intop}\nolimits_{#1}}%
          {\mathop{\vrule width 5pt height 3 pt depth -2.6pt
                  \kern -6pt \intop}\nolimits_{#1}}}
\def\XXint#1#2#3{{\setbox0=\hbox{$#1{#2#3}{\int}$}
      \vcenter{\hbox{$#2#3$}}\kern-.5\wd0}}
\begin{document}

\title{On Minimizers of a Landau-de Gennes Energy Functional on Planar Domains}
\author{
{\Large Dmitry Golovaty\footnote{Department of Mathematics, Buchtel College of Arts and Sciences, The University of Akron, Akron, OH, 44325, USA. \ Supported in part by the NSF grant DMS-1009849 {\tt dmitry@uakron.edu}}} \and
{\Large Jos\'e Alberto Montero\footnote{Facultad de Matem\'aticas, Pontificia Universidad Cat\'olica de Chile,
Vicu\~na Mackenna 4860, San Joaqu\'in, Santiago, Chile. \ Supported by Fondecyt Grant No. 1100370 {\tt amontero@mat.puc.cl}}}}
\date{\today}
\thispagestyle{empty}
\maketitle
\begin{abstract}
We study tensor-valued minimizers of the Landau-de Gennes energy functional on a simply-connected planar domain $\Omega$ with non-contractible boundary data.  Here the tensorial field represents the second moment of a local orientational distribution of rod-like molecules of a nematic liquid crystal. Under the assumption that the energy depends on a single parameter---a dimensionless elastic constant $\eps>0$---we establish that, as $\eps\to0$, the minimizers converge to a projection-valued map that minimizes the Dirichlet integral away from a single point in $\Omega$.  We also provide a description of the limiting map.
\end{abstract}

\section{Introduction}

In this paper we study minimizers of the Landau-de Gennes (LdG) energy functional in the presence of disclinations.  Under the assumptions that will be discussed later in the introduction, the corresponding variational problem can be described as follows. Let $\Omega \subset \R{2}$ be a smooth, bounded, and simply-connected domain and denote by $F_1$ the set of symmetric $3\times 3$ matrices with trace $1$.  For each $u\in W^{1,2}(\Omega, F_1)$, set
\begin{equation}\label{E_eps_def}
E_\eps(u) = \int_\Omega \left ( \frac{\abs{\nabla u}^2}{2} + \frac{W(u)}{\eps^2}\right ).
\end{equation}
Here $\eps > 0$ is a small parameter and
\begin{equation}
\label{W}
W(u) = {\rm tr}(q(u))=\frac{1}{2}{\rm tr}\left(\left(u-u^2\right)^2\right).
\end{equation}
Observe that $W(u)\geq 0$ for any $u \in F_1$ and $W(u)=0$ if and only if $u \in \cal P,$ where
$$
{\cal P} = \{A\in F_1: A^2=A\}
$$
is the set of rank-one, orthogonal projection matrices. 

Our results concern the minimizers $u_\eps$ of $E_\eps$ among $u \in W^{1,2}(\Omega, F_1)$ that satisfy $u = g$ on $\partial \Omega$ for {\em topologically nontrivial} boundary data $g$ corresponding to non-contractible curves in $\cal P$.  Our first result establishes the existence of a single point $a$ in the interior of $\Omega$ such that the $u_\eps$ converge to a function $u_0\in W^{1,2}_{loc}(\Omega \setminus \ \{a\}, {\cal P})$ as $\eps \to 0$.  More precisely, we prove the following

\begin{theorem}
\label{one_sing}
Let $g : \partial \Omega \to \cal P$ be a non-contractible curve in $\cal P$ and suppose that $u_\eps \in W^{1,2}(\Omega; F_1)$ is a minimizer of $E_\eps$ among functions $u \in W^{1,2}(\Omega; F_1)$ that satisfy the Dirichlet boundary condition $u = g$ on $\partial \Omega.$  First, the minimizers $u_\eps$ take values in the convex envelope of $\cal P$; in particular they are uniformly bounded in $\eps$.  Second, there is a single point $a$ in the interior of $\Omega$ such that the $u_\eps$ converge strongly (along a subsequence) to $u_0 \in W^{1,2}(\Omega\setminus B_R\{a\}; {\cal P})$ in $W^{1,2}(\Omega\setminus B_R\{a\}; F_1)$ as $\eps \to 0$ for any fixed $R >0$.  Finally, for any open set $U \subset \subset \overline{\Omega}\setminus \{a\}$, $u_0$ minimizes $\int_U \abs{\nabla v}^2$ among functions $v \in W^{1,2}_{loc}(\Omega\setminus \{a\}; {\cal P})$ satisfying $v = u_0$ on $\partial U$.
\end{theorem}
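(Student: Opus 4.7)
The approach is to adapt the Bethuel-Brezis-H\'elein and Sandier-Serfaty analysis of two-dimensional Ginzburg-Landau to the $\mathcal P$-valued setting, noting that $\mathcal P$ is diffeomorphic to $\RP{2}$ (a rank-one orthogonal projection is determined by its one-dimensional image), so $\pi_1(\mathcal P)=\mathbb Z/2\mathbb Z$ and the hypothesis on $g$ means that $g$ represents the nontrivial class. For the first assertion I would observe that $\mathrm{conv}(\mathcal P)\subset F_1$ is the compact set of trace-$1$ symmetric matrices with spectrum in $[0,1]$, and that the Frobenius nearest-point projection $\Pi\colon F_1\to\mathrm{conv}(\mathcal P)$ is $1$-Lipschitz and satisfies $W\circ\Pi\le W$. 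The latter inequality reduces, after diagonalization, to an elementary statement about $h(\lambda)=\lambda^2(1-\lambda)^2$ under projection of the eigenvalues onto $[0,1]^3\cap\{\sum\mu_i=1\}$. Composing $u_\eps$ with $\Pi$ yields an admissible competitor of no greater energy, so $u_\eps\in\mathrm{conv}(\mathcal P)$ almost everywhere and $u_\eps$ is uniformly bounded.

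The heart of the proof is a pair of matching estimates of the form $E_\eps(u_\eps)=\kappa|\log\eps|+O(1)$, where $\kappa$ is the universal minimal energy cost of a single $\mathbb Z/2$-vortex. The upper bound comes from an explicit competitor that equals $g$ on $\partial\Omega$, extends $g$ harmonically into $\mathcal P$ outside a small disk $B_r(a_0)$, and inside is modeled on a rescaled planar profile realizing the minimal one-vortex configuration. The lower bound comes from a vortex-ball construction applied to the density $\tfrac12|\nabla u_\eps|^2+W(u_\eps)/\eps^2$: the set $\{\mathrm{dist}(u_\eps,\mathcal P)\ge\eta\}$ is covered by disjoint balls that can be inflated to any scale $r\gg\eps$ at a logarithmic cost tied to the $\pi_1$-class of $u_\eps$ on the boundary circles. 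Since the sum of the $\mathbb Z/2$-degrees equals that of $g$, which is nontrivial, at least one ball must carry a nontrivial defect, yielding $N\ge 1$. The main obstacle is ruling out $N\ge 2$: the matching of upper and lower bounds, together with a careful accounting of how additional defects contribute at least another $\kappa|\log\eps|$, forces $N=1$.

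Call this unique concentration point $a$. Because $\int_{\Omega\setminus B_R(a)}W(u_\eps)/\eps^2=O_R(1)$ and $\int_{\Omega\setminus B_R(a)}|\nabla u_\eps|^2=O_R(1)$ (obtained by subtracting the vortex-ball lower bound from the matching upper bound), one extracts a subsequence converging weakly in $W^{1,2}(\Omega\setminus B_R(a);F_1)$. An $\eta$-ellipticity / clearing-out argument for the Euler-Lagrange equation of $E_\eps$ provides pointwise closeness of $u_\eps$ to $\mathcal P$ away from $a$, so the limit $u_0$ lies in $W^{1,2}(\Omega\setminus B_R(a);\mathcal P)$. Strong convergence is obtained by precluding energy concentration outside the core: if the measure $|\nabla u_\eps|^2\,dx$ carried any extra mass outside $B_R(a)$ in the limit, a competitor cutting that mass away would contradict the sharp upper bound, so $\int|\nabla u_\eps|^2\to\int|\nabla u_0|^2$ on $\Omega\setminus B_R(a)$.

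For the final minimality assertion, let $U\subset\subset\overline\Omega\setminus\{a\}$ and take any $v\in W^{1,2}(U;\mathcal P)$ with $v=u_0$ on $\partial U$. Pick a thin collar $U_\delta=\{x\in U:\mathrm{dist}(x,\partial U)<\delta\}$ on which, by strong convergence and Fubini, I can select a good level curve where the traces of $u_\eps$ and $v$ nearly agree. I then construct a competitor $\tilde u_\eps$ equal to $u_\eps$ outside $U$, equal to $v$ on $U\setminus U_\delta$, and given by the linear interpolation in $F_1$ followed by $\Pi$ on $U_\delta$. Minimality of $u_\eps$ gives $E_\eps(u_\eps)\le E_\eps(\tilde u_\eps)$; passing to $\eps\to 0$ using strong $W^{1,2}$ convergence of $u_\eps\to u_0$ on $U$ and a vanishing-collar estimate for the interpolation term yields $\int_U|\nabla u_0|^2\le\int_U|\nabla v|^2+o_\delta(1)$, and letting $\delta\to 0$ finishes the proof.
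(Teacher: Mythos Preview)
Your outline is broadly correct and follows the same Bethuel--Brezis--H\'elein / Jerrard--Sandier strategy the paper uses: projection onto $\Sigma=\mathrm{conv}(\mathcal P)$ for the first claim, an explicit one-vortex competitor for the upper bound, a ball-growth construction for the matching lower bound, and a competitor argument to upgrade weak to strong $W^{1,2}$ convergence and to obtain the local Dirichlet-minimality of $u_0$. Two points deserve comment.

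First, you have omitted the assertion that the concentration point $a$ lies in the \emph{interior} of $\Omega$. This is a genuine, separate step in the paper (their Lemma~3.9): if $a\in\partial\Omega$, then for small $r$ the circle $\partial B_r(a)$ meets $\Omega$ in an arc of length at most $\frac{3\pi r}{2}$, so the non-contractibility of $u_0$ on $\partial B_r(a)$ forces
\[
\int_{\partial B_r(a)\cap\Omega}|\nabla u_0|^2 \;\ge\; \frac{2L_0^2}{3\pi r}-C,
\]
and integrating in $r$ yields $\frac{2L_0^2}{3\pi}\ln(1/\eta)$ on the annulus, which for small $\eta$ exceeds the $\frac{L_0^2}{2\pi}\ln(1/\eta)$ ceiling coming from the upper bound. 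Your sketch does not address this, and nothing in the ball construction by itself prevents $a$ from drifting to $\partial\Omega$.

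Second, your mechanism for strong convergence and for local minimality is somewhat different from the paper's. You propose a collar-interpolation competitor (linear interpolation in $F_1$ followed by $\Pi$). The paper instead works on a single ball $B_\rho(x)$ with good boundary data, and builds the competitor
\[
v_\eps \;=\; Q\bigl(R_\eps\,u_0\,R_\eps^T\bigr) + Z_\eps,
\]
where $R_\eps$ is the harmonic extension of the \emph{minimal rotation} $R(u_0,Q(u_\eps))$ from $\partial B_\rho(x)$ (so $R_\eps\to I_3$ strongly in $W^{1,2}$), and $Z_\eps$ solves $-\Delta Z_\eps+\eps^{-2}Z_\eps=0$ with boundary value $u_\eps-Q(u_\eps)$ (so its full $E_\eps$-energy is $O(\eps)$ by Pohozaev). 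This gives $v_\eps=u_\eps$ on $\partial B_\rho(x)$ exactly, with $E_\eps(v_\eps)\to\frac12\int|\nabla u_0|^2$, and the strong convergence and local minimality follow immediately. Your collar approach is workable in principle, but the paper's rotation-based competitor is cleaner because it matches the boundary trace \emph{exactly} without an $o_\delta(1)$ error to chase.
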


To describe the structure of $u_0,$ let $M_a^3(\R{})$ be the set of antisymmetric $3\times 3$ matrices and let $[A;B]=AB-BA$ denote the commutator of matrices $A$ and $B$. It turns out that one can consider a vector field $j(u_0)$ with matrix entries
$$
j(u_0) = \left (\left [ u_0 ; \frac{\partial u_0}{\partial x}\right ], \left [ u_0 ; \frac{\partial u_0}{\partial y}\right ] \right ),
$$
instead of $u_0$ because $u_0$ can always be recovered from $j(u_0)$ (the reason for this reduces to the following standard fact: if $A:[0,T]\to M_a^3(\R{})$, then the solution of the initial value problem
$$
\gamma' = [\gamma; A],\ \ \gamma(0) \in\cal P,
$$
takes values in $\cal P$). In light of this observation, the following theorem gives a rough description of the limiting map $u_0$ described in \thmref{one_sing}.

\begin{theorem}
\label{main_theo}
Let $u_0$ be as in \thmref{one_sing}.  There is a function 
$$\psi_0 \in (W^{1,2}\cap L^\infty)(\Omega; M_a^3(\R{}))$$ 
and a constant anti-symmetric matrix $\Lambda_0$ such that
$$
j(u_0) = \frac{1}{2\pi r_a} (\hat{\theta}_a \Lambda_0)  + \nabla^\perp \psi_0
$$
in $\Omega.$ Here $a \in \Omega$ is as defined in \thmref{one_sing}, $r_a$ and $\hat{\theta}_a$ are the radial variable and the unit vector in an angular direction for polar coordinates centered at $a$ respectively, and we interpret $(\hat{\theta}_a \Lambda_0)$ and $\nabla^\perp \psi_0$ as matrix-valued vector fields according to (\ref{vector_matrix_entries}) and (\ref{def_grad_perp}) respectively.  Further, $\psi_0$ satisfies
$$
\Delta \psi_0 = 2\left [ \frac{\partial \psi_0}{\partial x}; \frac{\partial \psi_0}{\partial y}\right ] + \frac{1}{\pi r_a} \left [ \nabla \psi_0 \cdot \hat{\theta}_a ; \Lambda_0\right ],
$$
in $\Omega$, where we interpret $\nabla \psi_0 \cdot \hat{\theta}_a$ according to (\ref{def_F_dot_e}), subject to boundary conditions
$$
-\nabla \psi_0 \cdot \nu = \left [g; \frac{dg}{d \tau}\right ] - \frac{\hat{\theta}_a\cdot \tau}{2\pi r_a} \Lambda_0,
$$
on $\partial \Omega,$ where $\nu$ and $\tau$ are the outward unit normal and unit tangent vector to $\partial\Omega,$ respectively.  Finally, the function
$
Z_{u_0}(x) := \frac{1}{2\pi r_a}(\Lambda_0 - u_0 \Lambda_0 - \Lambda_0 u_0) \in L^2(\Omega; M_a^3(\R{})).
$
\end{theorem}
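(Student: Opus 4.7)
The plan is to exploit the algebraic relation between $u_0$ and $j(u_0)$ on the projection manifold $\mathcal{P}$, combined with the harmonic map equation from \thmref{one_sing} and a tangent map analysis near $a$. Writing $u_0 = ee^T$ locally with $|e|=1$, differentiation of the projection constraint $u_0^2=u_0$ yields the reconstruction identity $\partial_i u_0 = [u_0;j(u_0)_i]$. From this one obtains that $j(u_0)$ takes antisymmetric matrix values, that $\nabla\cdot j(u_0) = [u_0;\Delta u_0]$, and that $\partial_2 j(u_0)_1 - \partial_1 j(u_0)_2 = 2[\partial_2 u_0;\partial_1 u_0]$. The minimality of $u_0$ as a harmonic map into $\mathcal{P}$ on $\Omega\setminus\{a\}$ encodes the Euler--Lagrange equation as $[u_0;\Delta u_0]=0$, so $j(u_0)$ is divergence-free on $\Omega\setminus\{a\}$.

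To identify $\Lambda_0$, I would perform a blow-up analysis of $u_0$ at $a$. Because $g$ is non-contractible in $\mathcal{P}\simeq\RP{2}$, any tangent map at $a$ is a non-constant closed geodesic $\phi:S^1\to\mathcal{P}$ of the form $\phi(\theta)=e^{\theta\Lambda/2}\phi(0)e^{-\theta\Lambda/2}$ for a constant $\Lambda\in M_a^3(\R{})$ whose axial vector is perpendicular to $e(\phi(0))$. Define $\Lambda_0$ to be this generator normalized so that the leading singular part of $j(u_0)$ near $a$ matches $\frac{1}{2\pi r_a}\hat\theta_a\Lambda_0$; equivalently, $\Lambda_0=\lim_{R\to 0^+}\oint_{\partial B_R(a)}j(u_0)\cdot\tau\,ds$. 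With this choice the residual $v:=j(u_0)-\frac{1}{2\pi r_a}\hat\theta_a\Lambda_0$ has vanishing distributional divergence on all of $\Omega$ (since both $j(u_0)$ and the singular vortex are individually divergence-free on $\Omega\setminus\{a\}$, and their fluxes through small circles around $a$ cancel), and since $\Omega$ is simply connected we obtain $\psi_0$ with $v=\nabla^\perp\psi_0$. The technical heart of the argument is the regularity $\psi_0\in W^{1,2}\cap L^\infty$: although $j(u_0)$ and the vortex are individually only in $L^p_{\mathrm{loc}}$ for $p<2$, the correct $\Lambda_0$ forces the leading $1/r_a$ angular modes of their difference to cancel, giving $v\in L^2$ and $\psi_0\in W^{1,2}$; the $L^\infty$ bound then follows by elliptic bootstrapping in the PDE derived below.

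For the PDE, I would take the (scalar) curl of the decomposition: $\Delta\psi_0$ on the left; on the right, the curl of $\frac{1}{2\pi r_a}\hat\theta_a\Lambda_0$ vanishes on $\Omega\setminus\{a\}$ and the curl of $j(u_0)$ equals $2[\partial_x u_0;\partial_y u_0]$. Substituting $\partial_i u_0=[u_0;j(u_0)_i]$ and then replacing $j(u_0)$ by its decomposition produces an expression in $\psi_0$, $\Lambda_0$, and $u_0$; after algebraic simplification that uses $u_0^2=u_0$, the rank-one identity $u_0Bu_0=0$ for antisymmetric $B$, and the resulting outer-product identity $(j_xe)\wedge(j_ye)$ on tangent vectors at $u_0$, one collects the terms to obtain the stated equation
\[
\Delta\psi_0 = 2\Bigl[\frac{\partial\psi_0}{\partial x};\frac{\partial\psi_0}{\partial y}\Bigr] + \frac{1}{\pi r_a}\bigl[\nabla\psi_0\cdot\hat\theta_a;\Lambda_0\bigr].
\]
The boundary condition follows by restricting the decomposition to $\partial\Omega$ and pairing with the unit tangent $\tau$: since $u_0=g$ on $\partial\Omega$, one has $j(u_0)\cdot\tau=[g;dg/d\tau]$, while the tangential component of $\nabla^\perp\psi_0$ equals $-\nabla\psi_0\cdot\nu$, yielding the claimed identity.

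Finally, for the $L^2$ claim on $Z_{u_0}$, a direct calculation gives $u_0\Lambda_0+\Lambda_0 u_0=(\Lambda_0 e)\wedge e$, so $\Lambda_0-u_0\Lambda_0-\Lambda_0 u_0=\Lambda_0-(\Lambda_0 e)\wedge e$ is the component of $\Lambda_0$ commuting with $u_0$, i.e.\ the part of its axial vector along $e$. By construction $\Lambda_0$ generates a rotation whose axis is perpendicular to $e(\phi(\theta_a))$ on the tangent map, so this axial component vanishes identically on $\phi$. A quantitative comparison of $u_0$ to $\phi$ near $a$ using monotonicity and $\eps$-regularity estimates for minimizing harmonic maps yields $|\Lambda_0-u_0\Lambda_0-\Lambda_0 u_0|=O(r_a^\alpha)$ for some $\alpha>0$, which immediately gives $Z_{u_0}\in L^\infty_{\mathrm{loc}}$ near $a$ and hence $Z_{u_0}\in L^2(\Omega;M_a^3(\R{}))$. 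The principal obstacles in this program are the singular analysis at $a$ (existence and uniqueness of the tangent map, together with the rate of convergence), the exact matching of the $1/r_a$ modes that forces $v\in L^2$, and the algebraic simplification that collapses $2[\partial_x u_0;\partial_y u_0]$ into the clean PDE form.
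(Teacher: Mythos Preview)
Your outline gets the algebra and the overall architecture right (divergence-free $j(u_0)$, flux defines $\Lambda_0$, simply connected domain gives $\psi_0$, curl gives the PDE, tangential restriction gives the boundary condition). But the two points you flag as ``principal obstacles'' are exactly where your proposal diverges from the paper and where it has genuine gaps.

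For $\psi_0\in W^{1,2}$, saying that the correct $\Lambda_0$ forces the leading $1/r_a$ angular modes to cancel is not a proof; in fact the paper's mechanism is different and more robust. The paper first applies a Pohozaev identity to the harmonic map equation for $u_0$ to show that $r\int_{\partial B_r(a)}\bigl(\tfrac{1}{2}|\nabla u_0|^2-|\nabla u_0\cdot\nu|^2\bigr)$ is a constant equal to $L_0^2/(2\pi)$, from which one extracts $\int_\Omega|\nabla u_0\cdot\hat r_a|^2<\infty$. This immediately gives $\nabla\psi_0\cdot\hat\theta_a\in L^2$ (since $\nabla\psi_0\cdot\hat\theta_a=-j(u_0)\cdot\hat r_a$), and then a \emph{second} Pohozaev identity, now applied to the equation for $\psi_0$ itself, yields $\int_{\partial B_r(a)}|\nabla\psi_0\cdot\nu|^2\le C\int_{\partial B_r(a)}|\nabla\psi_0\cdot\tau|^2$, which upon integration gives the full $W^{1,2}$ bound. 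Your blow-up/tangent-map route would have to reproduce this radial-derivative integrability from scratch, and the monotonicity/$\eps$-regularity tools you invoke are calibrated for $W^{1,2}$ harmonic maps in higher dimensions, not for this logarithmically-singular 2D situation.

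For $Z_{u_0}\in L^2$, you propose proving $|\Lambda_0-u_0\Lambda_0-\Lambda_0 u_0|=O(r_a^\alpha)$ via uniqueness of the tangent map and a convergence rate; that is heavy machinery you have not established (and which the paper explicitly says it cannot yet prove---it conjectures but does not show smoothness of $\psi_0$). The paper's argument is purely algebraic and much cheaper: from $u_0^2=u_0$ one gets $j(u_0)\cdot\hat\theta_a=u_0(j(u_0)\cdot\hat\theta_a)+(j(u_0)\cdot\hat\theta_a)u_0$, and substituting the decomposition $j(u_0)\cdot\hat\theta_a=\tfrac{1}{2\pi r_a}\Lambda_0+\nabla^\perp\psi_0\cdot\hat\theta_a$ shows that $Z_{u_0}$ equals a linear expression in $\nabla^\perp\psi_0\cdot\hat\theta_a$ with bounded coefficients. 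Since $\psi_0\in W^{1,2}$, this is in $L^2$ immediately. Finally, your ``elliptic bootstrapping'' for $L^\infty$ glosses over the $1/r_a$ coefficient in the equation; the paper handles this by a Green's function representation and a delicate bipolar-coordinate estimate of the resulting singular integral.
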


Although we cannot prove it yet, we conjecture that the map $\psi_0$ in \thmref{main_theo} is smooth. If in addition $\nabla \psi_0(a)=0,$ our results allow for renormalization of $E_\eps(u_\eps)$ along the lines of \cite{BBH} via an expansion of $E_\eps(u_\eps)$ containing a leading term proportional to $\abs{\ln{\eps}}$ and bounded terms depending only on $a$ and the boundary data $g$.  

Our problem is closely related to and motivated by the studies of equilibrium configurations of nematic liquid crystals---materials composed of rod-like molecules that flow like fluids, yet they retain a degree of molecular orientational order similar to crystalline solids. There are several mathematical frameworks to study the nematics, leading to different, but related variational models that we will discuss next.

The local orientational order can be described by specifying a director---a unit vector in a direction preferred by the molecules at a given point. The director field forms a basis for the Oseen-Frank theory for the uniaxial nematic liquid crystals \cite{virga}. Within this theory, one constructs an energy penalizing for spatial variations of the director, distinguishing between various elastic modes (splay, bend, twist) and taking into account interactions with electromagnetic fields. Although this theory has generally been very successful in predicting equilibrium nematic configurations, it prohibits certain types of topological defects, e.g., disclinations, as the constraint that the director must have a unit length becomes too rigid. A possible remedy was proposed by Ericksen \cite{ericksen} who introduced a scalar parameter intended to describe the quality---the degree---of local molecular orientational order. 

Despite the fact that the Ericksen's theory is capable of handling line defects, it still assumes that a preferred direction is specified by the director, excluding a possibility that the nematic can be biaxial. Here a biaxial state differs from a uniaxial state in that it has no rotational symmetry; instead it possesses reflection symmetries with respect to each of a three orthogonal axes (only two of which need to be specified). Biaxial configurations are conjectured to exist, e.g., at the core of a nematic defect. Further, certain nematic configurations cannot even be orientable, that is, they cannot be described by a continuous director field \cite{Ball_Zarn}. These deficiencies can be circumvented within the Landau-de Gennes theory that we will now briefly review (see also \cite{Ball_Zarn}, \cite{apala_zarnescu_01}, and \cite{Mottram_Newton}).

Suppose that orientations of rod-like molecules in a small neighborhood of a point $x\in\Omega$ can be described in terms of a probability density function $\psi(x,\bm):\Omega\times\mathbb S^2\to\mathbb R^+,$ i.e., the probability that the molecules near $x$ are oriented within a subset $S\in\mathbb S^2$ is given by
\[
p(x,S)=\int_S\psi(x,\bm)\,d\sigma\,.
\] 
Since the head and tail of a nematic molecule are indistinguishable, the function $\psi(x,\cdot)$ is even and the first moment of $\psi(x,\cdot)$ vanishes. Consequently, if one were to seek a macroscopic theory based on moments of $\psi(x,\cdot),$ the simplest approach would be to use the second moment
\begin{equation}
\label{pdf}
u(x)=\int_{\mathbb S^2}{\bm\otimes\bm}\,\psi(x,\bm)\,d\sigma,
\end{equation}
where $({\bf a}\otimes{\bf b})_{ij}=a_ib_j,\ i,j=1,\ldots,3$ is the tensor product of $\bf a$ and $\bf b$. 

The following properties of $u$ immediately follow from \eqref{pdf} and the fact that $\psi(x,\cdot)$ is a probability density function
 \begin{enumerate}
\item $u(x)\in F_1$ and its eigenvalues satisfy $\lambda_i\in[0,1],\ i=1,\ldots,3$.
\item $u(x)=\frac{1}{3}I$ in an {\em isotropic state} when all molecular orientations in a vicinity of $x$ are equally probable, i.e., $\psi(x,\bm)=\frac{1}{4\pi}.$ Here $I$ is the identity matrix.  
\item $u(x)=\bm_0\otimes\bm_0\in\cal P$ in a {perfect uniaxial nematic state} when all molecules near $x$ are parallel to $\pm\bm_0,$ i.e., $\psi(x,\bm)=\frac{1}{2}(\delta(\bm-\bm_0)+\delta(\bm+\bm_0)).$
\end{enumerate}
For other forms of $\psi(x,\cdot),$ the set of eigenvalues of $u(x)$ differs from those in (i) and (ii) and the nematic is in intermediate states of order that can be either uniaxial with a degree of orientation less than $1$ or biaxial. 

In {\em thermotropic nematics}, a phase transition from an isotropic to a nematic state occurs as the temperature is decreased below a certain threshold value. The best way to account for a symmetry change during the transition is to define an appropriate order parameter. Within the LdG phenomenological theory, the role of the order parameter is played by the second order tensor $Q$ equal to $u$ defined in the previous paragraph, translated by a factor of $\frac{1}{3}I$. The theory is based on the hypothesis that equilibrium properties of the system can be found from a non-equilibrium free energy, constructed as an $O(3)$-symmetric expansion in powers of $Q$.

In this paper we formulate our results in terms of the matrix $u$ for the reasons of mathematical simplicity, although they can easily be restated within a standard $Q-$tensor framework by incorporating the appropriate translation. We will further assume that the lowest energy configuration at temperatures below the isotropic-nematic transition is that of a perfect uniaxial nematic $u\in\cal P,$ while the isotropic state $u=\frac{1}{3}I$ minimizes the energy above the transition temperature. Since the LdG free energy must be invariant with respect to rotations, it can only be a function of the invariants of the matrix $u.$ Given these conditions and incorporating the invariants to the least possible powers, we obtain that
\begin{equation}
\label{wisborn}
W_\beta(u)=2I_2^2(u)-\beta I_3(u)\,,
\end{equation}
where the invariants are given by
\[I_2(u)=\frac{1}{2}\left(1-\tr{(u^2)}\right),\ \ I_3(u)={\rm det}(u) = \frac{1}{6}\left(1-3\tr{(u^2)}+2\,\tr{\left(u^3\right)}\right),\]
since the trace of $u$ is equal $1.$ Simple calculations show that a perfect uniaxial state $u\in\cal P$ is a local minimum of $W_\beta$ when $0<\beta<8$ and it is a global minimum of $W_\beta$ when $\beta\leq6$. The isotropic state $u=\frac{1}{3}I$ is a local maximum of $W_\beta$ when $0<\beta\leq4$, it is a local minimum of $W_\beta$ when $4<\beta<6,$ and it is a global minimum of $W_\beta$ when $6\leq\beta\leq8.$ Note that the expression \eqref{wisborn} is equivalent to the standard LdG energy for the traceless tensors, once the condition that the nematic minimum corresponds to a perfect uniaxial state is imposed. Since only two out of the three coefficients in the standard energy can be imposed independently, the additional condition reduces the number of the coefficients to one and $\beta$ above should be temperature-dependent. In this work we will assume that $2<\beta<6,$ i.e., the temperature is below that of the nematic-to-isotropic transition. The lower bound on $\beta$ will be explained later on in the text---it is related to the fact that predictions on the phenomenological, expansions-based LdG theory become non-physical away from the transition temperature (cf. \cite{apala_nonphys}). Unless specified otherwise, for simplicity we will set $\beta=3,$ thus recovering \eqref{W}.

The spatial variations of the order parameter in the LdG theory are controlled by the term quadratic in the gradient of the order parameter. Here we will assume that all elastic constants are equal so that this part of the energy becomes proportional to the Dirichlet integral. Finally, we assume that the remaining (non-dimensional) elastic constant $\epsilon$ is small---e.g., when the diameter of $\Omega$ is large---and that the three-dimensional cylindrical domain $\Omega\times[-L,L]$ occupied by the liquid crystal and the boundary data are such that we can ignore the dependence on the axial spatial variable.

The Dirichlet boundary conditions on $u$ are referred to as the strong anchoring conditions on $\partial\Omega$ in the physics literature: they impose specific preferred orientations on nematic molecules on surfaces bounding the liquid crystal. We are interested in a situation in which the nematic is in a perfect uniaxial state on the boundary and has a winding number $\pm\frac{1}{2}$; in this case the nematic has a disclination in $\Omega\times[-L,L]$ or, equivalently, a point defect/vortex in $\Omega$.

To summarize the discussion above, we consider a variational problem for an energy functional $E_\eps$ given in (\ref{E_eps_def}) that describes a nematic liquid crystal within the context of the Landau-de Gennes theory. The functional is defined over the set of matrix-valued functions; the principal contribution of this work is that {\em we do not impose any constraints on the target set} $F_1$ of $3\times 3$ symmetric, trace-one matrices, beyond what is required by the LdG theory. The variational problem consists of minimizing $E_\eps$ among all $u \in W^{1,2}(\Omega;F_1)$ that are subject to the Dirichlet boundary condition $u = g$.  Here $\Omega \subset \R{2}$ is a bounded, smooth, simply-connected domain and $g:\partial \Omega \to \cal P$ represents a non-contractible curve in the set of rank-one, orthogonal projection matrices.  Our main goal is to understand the behavior of the minimizers of $E_\eps$ in the limit of a vanishing elastic constant $\eps \to 0$.

Whenever possible, our approach follows the roadmap established for Ginzburg-Landau vortices by Bethuel, Brezis and Helein in \cite{BBH}.  As in that work, we find that the minimizers $u_\eps$ of $E_\eps$ have energies that blow up as $\abs{\ln(\eps)}$ when $\eps \to 0.$ The minimizers in \cite{BBH} converge to an $S^1$-valued harmonic map away from a finite set of points in $\Omega$ in the limit of $\eps \to 0.$ The situation is similar here, although the limiting harmonic map is $\cal P$-valued and the singular set consists of a single point. On the other hand, even though the energy-estimates-based techniques from \cite{BBH} can mostly be extended to our case (albeit, nontrivially), the principal difference between this work and \cite{BBH} is that the results in \cite{BBH} that rely on the structure of harmonic maps into $S^1$ are no longer applicable to harmonic maps with values in $\cal P$ (or, equivalently, to $\RP{2}$). Inspired by H\'elein's treatment of the B\"acklund transformation \cite{Helein_moving_frames}, instead of studying the limiting map directly, we choose to describe it in terms of its current vector (\ref{def_current_vector}). This leads us to consider solutions of the CMC equation \cite{riviere} that are not in $W^{1,2}$.  The connection between the CMC equation and the LdG energy seems to have not been made in the literature before.

In a recent work \cite{bauman_phillips_park}, Bauman, Park and Phillips considered a related problem for an energy functional with a more general expression for the elastic energy that is defined over a more narrow admissible class of functions. The mathematical problem in \cite{bauman_phillips_park} describes a thin nematic film with the strong orthogonal anchoring on the surfaces of the film. The anchoring forces one eigenvector of the order parameter matrix inside the film to be perpendicular to the film surface. The limiting map in \cite{bauman_phillips_park} then takes values in $\RP{1}$ making the analysis of \cite{bauman_phillips_park} closer to that of \cite{BBH} than what is possible for our problem. On the other hand, the additional constraint on the admissible space of functions allows for a comparatively better description of the limiting map. 

Note that, when $\Omega \subset \R{3},$ the convergence analysis for $u_\eps$ is quite different from its two-dimensional counterpart. Indeed, although the limiting map from $\R{3}$ into $\cal P$ can also have singularities, the energies $E_\eps(u_\eps)$ of the minimizers $u_\eps$ are uniformly bounded as $\eps \to 0$.  The interested reader can find a thorough review of recent work on this problem in \cite{CULect}.

After this work was submitted for publication, we had learned that results similar to our Theorem 1 have been simultaneously obtained by Canevari \cite{canevari}.  However, the methods in \cite{canevari} are significantly different from ours in that the author intentionally avoids using the matrix algebra of the problem, whereas we use it extensively.

The manuscript is organized as follows: in the next section we set our notation and collect some well known-facts needed for subsequent developments.  In Section 3 we prove \thmref{one_sing}.  In the last section we prove \thmref{main_theo}.

\section{Notation}

In this section we set our notation.  We will denote by $M^3(\R{})$ the set of $3\times 3$ matrices with real entries and by $M^3_a(\R{})$, $M^3_s(\R{}),$ and $O(3)$ the sets of anti-symmetric, symmetric, and orthogonal matrices, respectively.  For any pair $A,B \in M^3(\R{}),$ we set
$$
\langle A,B \rangle = {\rm tr}(A^TB),\,\,\, \abs{A}^2 = \langle A,A \rangle \,\,\,\mbox{and}\,\,\, [A;B]=AB-BA.
$$
$I_n$ will denote the $n\times n$ identity matrix, whereas we will write $I_{3\times 3}$ for the identity map from $M^3(\R{})$ to itself.

For any $a \in \R{2}$, the standard polar coordinates centered at $a$ will be denoted by $r_a$, $\theta_a,$ with $\hat{r}_a$, $\hat{\theta}_a$ being the corresponding unit vectors (we will drop the subscript whenever there is no ambiguity).

The set of rank-one orthogonal projections in $\R{3}$ will be denoted by $\cal P$, that is,
$$
{\cal P} = \{ A \in M^3(\R{}): A^T = A^2 = A, \,\,\, {\rm tr}(A) = 1\}.
$$

It is well known that $\cal P$ is diffeomorphic to the real projective space.  We define
$$
L_0 = \inf \{l(\gamma): \,\,\, \gamma \,\,\, \mbox{a closed, non-contractible curve in}\,\,\, \cal P\},
$$
where $l(\gamma)$ denotes the length of the curve $\gamma$.  With the usual ($2$ to $1$) covering map from $\mathbb S^2$ to $\cal P,$ we can associate every closed geodesic in $\cal P$ with a great circle in $\mathbb S^2$, thus $L_0 > 0$.  Further, if $\gamma_1, \gamma_2$ are two closed geodesics in $\cal P$, there is an orthogonal {\em constant} matrix $R\in O(3)$ such that $\gamma_2 = R\gamma_1 R^T$.  

Let now
\begin{equation}\label{canonical_flat}
\gamma_0(t) = \frac{1}{2} \left  ( I_3 + \left ( \begin{array}{ccc} \cos(t) & \sin(t) & 0\\ \sin(t) & -\cos(t) & 0 \\ 0 & 0 & -1 \end{array} \right ) \right ),
\end{equation}
represent a closed, non-contractible geodesic in $\cal P$.  A direct computation shows that
\begin{equation}\label{A_0_constant}
A_0(t) = \frac{1}{2\pi}\left [ \gamma_0(t); \frac{d\gamma_0}{dt}(t)\right ]
\end{equation}
is a constant. Since any other closed geodesic in $\cal P$ can be written as $\gamma_1 = R\gamma_0 R^T$, where $R\in O(3)$ is a constant orthogonal matrix,
$$
A(t) = \frac{1}{2\pi}\left [ \gamma(t) ; \frac{d\gamma}{dt}(t)\right ] = \frac{1}{2\pi}R\left [ \gamma_0(t) ; \frac{d\gamma_0}{dt}(t)\right ]R^T
$$
is constant for any closed geodesic $\gamma$ in $\cal P$, and
$$
\abs{A}^2 = \abs{A_0}^2.
$$
Consider now any matrix $A\in M_a^3(\R{})$ that can be written as $A=RA_0 R^T$, where $A_0$ is given by (\ref{A_0_constant}) and $R\in O(3)$.  Next, solve the system of ODEs
$$
\gamma' = [A; \gamma]
$$
with the initial condition $\gamma(0) = R\gamma_0(0)R^T$.  By a uniqueness theorem for this ODE, the solution $\gamma = R\gamma_0 R^T$ is a closed geodesic.  A direct computation shows that $A=[\gamma;\gamma']$.

The previous discussion demonstrates that there is a $1-1$ correspondence between closed geodesics in $\cal P$ and antisymmetric matrices of the form $A=RA_0R^T$ with $R\in O(3)$.  We will call such an $A\in M_a^3(\R{})$ an antisymmetric representative of a geodesic.

Set now
$$
F_\lambda = \{A \in M_s^3(\R{}): {\rm tr}(A) = \lambda\}.
$$
We will denote by
$$
\Sigma \,\,\,\,\mbox{and} \,\,\,\, \Pi
$$
the closed convex envelope of $\cal P$ in $F_1$, and the projection from $F_1$ onto $\Sigma$, respectively.

We shall make use of the following
\begin{definition}\label{def_min_rotation} Let $A,B \in \cal P$, $A \neq B$, be any two matrices such that $\langle A,B\rangle \neq 0$.  The {\em minimal rotation} $R(A,B)$ mapping $A$ to $B$ is the unique matrix $R \in O(3)$ such that
$$
B=RAR^T
$$
and
$$
RCR^T = C
$$
for the unique matrix $C\in \cal P$ with $CA=AC=0$ and $CB=BC=0$.  If $A = B$, we define $R(A,B) = I_3$.
\end{definition}
\begin{remark}
We emphasize that
$$
B = R(A,B) A R^T(A,B)
$$
for any two $A, B \in \cal P$ such that the angle between their images is not $\frac{\pi}{2}$.
\end{remark}
We will make use of the fact that $R(A,B)$ depends smoothly on $A,B \in \cal P$, at least when $A$ and $B$ are close to each other.  This can be seen from the next
\begin{lemma}\label{expr_min_rot}  Let $A, B \in \cal P$ be such that $\langle A,B\rangle \neq 0$, then
\begin{align}
R(A,B) &= {\rm exp}\left ( \frac{1}{2}\ln \left (  I_3  + \frac{2}{\langle A, B\rangle}[A;B]^2 + 2[A;B]\right ) \right ),
\end{align}
where $\ln$ denotes a local inverse of the exponential map ${\rm exp}:M_a^3(\R{})\to O(3)$ near $I_3$.
\end{lemma}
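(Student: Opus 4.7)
The strategy is to verify the formula by reducing both sides to an explicit rotation matrix. Write $A = aa^T$ and $B = bb^T$ for unit vectors $a, b\in\R{3}$ with $a\cdot b = \cos\theta$; the hypothesis $\langle A, B\rangle = \cos^2\theta \neq 0$ becomes $\theta \neq \pi/2$. In the nondegenerate case $A\neq B$, take $c$ to be a unit vector perpendicular to both $a$ and $b$, so that $C = cc^T$ is the unique element of $\cal P$ annihilated by $A$ and $B$. By Definition~\ref{def_min_rotation}, $R(A,B)$ fixes $c$ and rotates $a$ onto $\pm b$ in the plane $c^\perp$; hence $R(A,B) = \exp(\theta K_c)$, where $K_c\in M_a^3(\R{})$ is the antisymmetric matrix acting by $K_c x = c\times x$ (with the orientation of $c$ chosen to make this consistent).

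Next, I would carry out the algebraic computation of the expression inside $\ln$. From $AB = (a\cdot b)\,ab^T$, $BA = (a\cdot b)\,ba^T$, and the identity $ab^T - ba^T = \pm\sin\theta\, K_c$, one obtains $[A;B] = \pm\tfrac{1}{2}\sin(2\theta)K_c$ and, using $K_c^2 = cc^T - I_3$, $[A;B]^2 = \cos^2\theta\sin^2\theta\,(cc^T - I_3)$. Dividing the latter by $\langle A, B\rangle = \cos^2\theta$ and combining,
\begin{align*}
I_3 + \tfrac{2}{\langle A, B\rangle}[A;B]^2 + 2[A;B] = \cos(2\theta)(I_3 - cc^T) + cc^T \pm \sin(2\theta)K_c.
\end{align*}
This is precisely Rodrigues' formula for the rotation through $\pm 2\theta$ about $c$; with the orientation of $c$ matched to that implicit in $R(A,B)$, this equals $R(A,B)^2 = \exp(2\theta K_c)$. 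Since $\theta < \pi/2$ implies $2\theta < \pi$, the rotation $R(A,B)^2$ lies inside the domain of the principal inverse $\ln$ near $I_3$, and applying $\ln$ gives $2\theta K_c$; halving and exponentiating returns $R(A,B)$. The degenerate case $A = B$ is immediate, as both sides reduce to $I_3$.

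The calculational heart of the argument is the Rodrigues identification, resting on two facts: first, since $A, B \in \cal P$ are rank-one projections, $[A;B]$ is a \emph{scalar} multiple of the single infinitesimal generator $K_c$ of rotations about their common perpendicular; second, that scalar combines with $K_c^2 = cc^T - I_3$ to reproduce the $\cos(2\theta)$, $\sin(2\theta)$ coefficients of a double-angle rotation matrix. Only the orientation of $c$ (equivalently, the sign of the commutator) requires genuine bookkeeping. As a byproduct, the smoothness claim preceding the lemma follows immediately, since $\exp$, $\ln$, and the polynomial-over-$\langle A,B\rangle$ expression inside $\ln$ are analytic on the open region $\{\langle A, B\rangle \neq 0\}$.
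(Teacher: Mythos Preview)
Your proposal is correct and follows essentially the same approach as the paper: reduce to an explicit computation in a frame adapted to the pair $A,B$. The paper's proof is terser, simply noting that the formula can be verified directly for $A=\gamma_0(\alpha)$, $B=\gamma_0(\beta)$ on the canonical geodesic~(\ref{canonical_flat}) and that any two projections can be brought to this form by conjugation with a constant orthogonal matrix; your Rodrigues-formula computation makes this verification explicit.
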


\begin{proof}

This expression is easy to establish if we take $A=\gamma_0(\alpha)$ and $B=\gamma_0(\beta)$, where $\gamma_0$ is given in (\ref{canonical_flat}).  However, any two $A, B \in \cal P$ can be written in this form in some coordinate system.  This proves the lemma.

\qed
\end{proof}

Let now $\Omega \subset \R{2}$.  We will often deal with {\em matrix-valued functions} $u:\Omega \to M^3(\R{})$ and {\em matrix-valued vector fields} $F : \Omega \to (M^3(\R{}))^2$, $F = (F_1,F_2)$.  For a matrix-valued function $u$, the gradient and its perpendicular are given by the matrix-valued vector fields
\begin{equation}\label{def_grad_perp}
\nabla u = \left ( \frac{\partial u}{\partial x}, \frac{\partial u}{\partial y} \right ), \,\,\, \nabla^\perp u = \left ( \frac{\partial u}{\partial y}, -\frac{\partial u}{\partial x} \right ),
\end{equation}
respectively.  For matrix-valued vector fields, the divergence and curl
$$
\nabla \cdot F = \frac{\partial F_1}{\partial x} + \frac{\partial F_2}{\partial y}, \,\,\, \nabla^\perp \cdot F = \frac{\partial F_2}{\partial x} - \frac{\partial F_1}{\partial y}
$$
are matrix-valued functions.   When $z :\Omega \to \R{2}$ and $A:\Omega \to M^3(\R{})$, the matrix-valued vector field $zA$ has the entries
\begin{equation}\label{vector_matrix_entries}
zA = (z_1A, z_2A).
\end{equation}
On the other hand, if $F$ is a matrix-valued vector field and $e=(e_1, e_2)\in \R{2}$, we set
\begin{equation}\label{def_F_dot_e}
F\cdot e = e_1F_1+e_2F_2,
\end{equation}
which is a matrix-valued function.  We emphasize the difference between $F\cdot e$ and $zA$ defined in (\ref{vector_matrix_entries}). In what follows, unless there is an ambiguity, we will refer to matrix-valued functions and matrix-valued vector fields simply as functions and vector fields, respectively.

Given a function $u\in W^{1,2}(\Omega, M^3(\R{}))$ its {\em current vector field} is
\begin{equation}\label{def_current_vector}
j(u) = (j_1,j_2) = \left ( \left [ u ; \frac{\partial u}{\partial x}\right ], \left [ u ; \frac{\partial u}{\partial y}\right ] \right ),
\end{equation}
which can be written informally as
$$
j(u) = [u; \nabla u].
$$
Notice that
$$
\nabla \cdot j(u) = [u; \Delta u] \,\,\,\mbox{and}\,\,\, \nabla^\perp \cdot j(u) = 2\left [\frac{\partial u}{\partial x}; \frac{\partial u}{\partial y}\right ].
$$

Whenever $u :\Omega \to {\cal P}$, differentiating the identity $u^2=u$ and performing some simple computations, we obtain 
$$
\abs{j(u)}^2 = \abs{\left [ u ; \frac{\partial u}{\partial x}\right ]}^2 + \abs{\left [ u ; \frac{\partial u}{\partial y}\right ]}^2 = \abs{\nabla u}^2.
$$
Also from $u^2 = u$ we have
$$
\left [ \frac{\partial u}{\partial x}; \frac{\partial u}{\partial y}\right ] = -\left [ \left [ u ; \frac{\partial u}{\partial x}\right ] ; \left [ u ; \frac{\partial u}{\partial y}\right ] \right ].
$$
Hence, for $u \in W^{1,2}(\Omega , {\cal P})$ we have
$$
\nabla^\perp \cdot j(u) +2[j_1;j_2]=0.
$$

We are now ready to proceed with the proofs of the main results of the paper.

\section{Proof of \thmref{one_sing}}

The proof will be split into a series of lemmas.  Throughout the remainder of the paper we will fix a smooth open set $\Omega_L$ such that $\Omega \subset \subset \Omega_L$ and there is an extension $u_g$ of the function $u_\eps$ to $\Omega_L$ that depends only on the boundary data $g$ and
$$
\int_{\Omega_L \setminus \Omega} \abs{\nabla u_g}^2
$$
is finite (and, obviously, independent of $\eps$).

We start by proving the following
\begin{lemma}
\label{proj_lemma}
For any $u \in W^{1,2}(\Omega; F_1)$ and $v = (\Pi \circ u)$, we have
$$
E_\eps(v) \leq E_\eps(u).
$$
\end{lemma}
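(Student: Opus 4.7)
The plan is to prove the lemma by establishing the two pointwise inequalities
\[
|\nabla v(x)|^2 \le |\nabla u(x)|^2 \qquad \text{and} \qquad W(v(x)) \le W(u(x))
\]
for almost every $x \in \Omega$ and then integrating. The first bound is standard: since $\Sigma$ is a closed convex subset of the affine Hilbert space $F_1$, the metric projection $\Pi:F_1 \to \Sigma$ is $1$-Lipschitz (and a retraction, since $\Pi$ equals the identity on $\Sigma$). Composing a Sobolev map with a Lipschitz map preserves $W^{1,2}$ and gives the pointwise gradient estimate, and in particular $v = \Pi \circ u \in W^{1,2}(\Omega; F_1)$.

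For the potential inequality $W(\Pi(A)) \le W(A)$, valid for every $A \in F_1$, I would exploit the $O(3)$-symmetry of the problem. Both $W$ and $\Sigma$ are invariant under $A \mapsto RAR^T$ for every $R \in O(3)$, so $\Pi$ commutes with orthogonal conjugation; this forces $\Pi(A)$ to share an eigenbasis with $A$. The eigenvalues of $\Pi(A)$ are then the metric projection of those of $A$ onto the probability simplex, given by the water-filling formula $\mu_i = (\lambda_i + c)^+$, where $c \in \mathbb{R}$ is the Lagrange multiplier for the trace constraint $\sum_i \mu_i = 1$. Writing $g(t) = t^2(1-t)^2$, the pointwise inequality reduces to the scalar claim that, whenever $\sum_i \lambda_i = 1$ and $\mu$ is the simplex projection of $\lambda$,
\[
\sum_{i=1}^3 g(\mu_i) \;\le\; \sum_{i=1}^3 g(\lambda_i).
\]

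To establish this scalar inequality I would case-analyze by the number of vanishing coordinates of $\mu$. If none vanish then $\mu = \lambda$; if two vanish then $\mu$ is a vertex of the simplex and $\sum_i g(\mu_i) = 0$. The only substantive case is exactly one vanishing coordinate, say $\mu_1 = 0$, in which case $\lambda_1 < 0$. Setting $\lambda_1 = -2a$ with $a > 0$ and parametrizing $\lambda_2 = \tfrac12 + a + s$, $\lambda_3 = \tfrac12 + a - s$ with $|s| \le \tfrac12$, one checks that the projection simplifies to $\mu_2 = \tfrac12 + s$, $\mu_3 = \tfrac12 - s$, independently of $a$. A direct expansion of the squares then yields
\[
W(A) - W(\Pi(A)) \;=\; \tfrac{3}{2}a^2 + 8 a^3 + 9 a^4 + 6 a^2 s^2 \;\ge\; 0,
\]
with equality only when $a = 0$, i.e., when $A$ already lies in $\Sigma$.

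The main obstacle is this final scalar inequality: the function $g$ is not convex on $[0,1]$ (it has two inflection points and a local maximum at $t = \tfrac12$), so the conclusion does not follow from any general Schur-convexity principle, nor from monotonicity of $W$ along the straight segment from $\Pi(A)$ to $A$ (both derivatives of $W$ at the endpoint $\Pi(A)$ can vanish by symmetry). One really seems to need the explicit algebraic cancellation displayed above, which is what makes the spectral reduction and the one-negative-eigenvalue parametrization essential.
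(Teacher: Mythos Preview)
Your proof is correct and follows essentially the same route as the paper: both use the $1$-Lipschitz property of $\Pi$ for the gradient term, reduce the potential inequality to eigenvalues via the simplex projection, and then verify the scalar inequality by a direct computation in the single nontrivial case (one negative eigenvalue). The only differences are cosmetic---you invoke $O(3)$-equivariance and the water-filling formula where the paper checks the variational characterization of the simplex projection by hand, and you expand $W(A)-W(\Pi(A))$ as an explicitly nonnegative polynomial in $(a,s)$ where the paper instead shows $\partial_s\psi\ge 0$ for $s\ge\tfrac12$ in the equivalent variables $s=\tfrac12+a$, $t=s_{\text{yours}}$.
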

\begin{proof}
Recall that $\Pi$ is the projection onto $\Sigma$, the convex envelope of $\cal P$.  It is well known that $\Pi$ is a Lipschitz function with a Lipschitz constant $L = 1$, hence
$$
\abs{\nabla v} \leq \abs{\nabla u}.
$$
We need to check then that
$$
W(v) \leq W(u).
$$
To this end, let
$$
S = \{x=(x_1,x_2,x_3) \in \R{3}: x_j \geq 0,\,\,\, \sum_{j=1}^3 x_j = 1\},
$$
be a standard simplex in $\R{3}$ and denote by $\mu$ the projection onto $S$ in $\R{3}$.  Let now $u \in F_1$.  Since $u$ is symmetric, there are three projections $P_j \in \cal P$, and real numbers $\lambda_j \in \R{}$, $j=1,2,3$, such that
$$
u = \sum_{j=1}^3 \lambda_jP_j, \,\,\,\, \sum_{j=1}^3 \lambda_j = 1,
$$
and
$$
P_jP_k = P_kP_j = \delta_{j,k}P_k.
$$
Here $\delta_{j,k}$ denotes the Kronecker symbol and the eigenvalues of $u$ are labeled in the decreasing order $\lambda_1 \geq \lambda_2 \geq \lambda_3$.  Note also, that $\sum_{j=1}^3 P_j = I_3$.

We need to prove that $W(\Pi(u)) \leq W(u)$.  We can assume that $\lambda_3 < 0$; otherwise, $u \in \Sigma$ and $\Pi(u) = u$ and there is nothing to prove.

Our first claim is the following:  if $(\mu_1,\mu_2,\mu_3)=\mu(\lambda_1,\lambda_2,\lambda_3)$ denotes the projection of the vector $(\lambda_1,\lambda_2,\lambda_3)$ onto the simplex $S$, then
$$
v = \Pi(u) = \sum_{j=1}^3 \mu_jP_j.
$$
To prove this, first let $Q \in \cal P$ be any rank-one orthogonal projection.  Then we have
\begin{align}
\langle u-v; Q-v\rangle = \sum_{j=1}^3 (\lambda_j - \mu_j)(\langle P_j; Q \rangle - \mu_j) \leq 0 \label{char_proj_one}
\end{align}
because $(\mu_1,\mu_2,\mu_3)=\mu(\lambda_1,\lambda_2,\lambda_3)$ is the projection of $(\lambda_1,\lambda_2,\lambda_3)$ onto $S$ and the vector $(\langle Q; P_1 \rangle, \langle Q; P_2 \rangle, \langle Q; P_3 \rangle) \in S$.  Indeed,
$$
\sum_{j=1}^3 \langle Q; P_j \rangle = \langle Q ; I_3 \rangle = {\rm tr}(Q) = 1,
$$
and $\langle Q; P \rangle \geq 0$ for any $P \in \cal P$.  We observe now that a general $A \in \Sigma$ can be written as
$$
A = \sum_{j=1}^3 \alpha_j Q_j
$$
for some projections $Q_j \in \cal P$ and scalars $\alpha_j \in \R{}$, $j = 1,2,3$ such that
$$
\alpha_j \geq 0, \,\,\, \sum_{j=1}^3 \alpha_j = 1,\,\,\,\, Q_jQ_k=Q_kQ_j = \delta_{k,j}Q_j.
$$
Using this expression, we conclude through (\ref{char_proj_one}) that
\begin{align*}
\langle u-v; A-v\rangle &= \sum_{j=1}^3 (\lambda_j - \mu_j)(\langle P_j; A \rangle - \mu_j) \\
&=\sum_{i=1}^3 \alpha_i \left (  \sum_{j=1}^3 (\lambda_j - \mu_j)(\langle P_j; Q_i \rangle - \mu_j)  \right ) \leq 0
\end{align*}
for all $A \in \Sigma$.  This characterizes the fact that $v = \Pi(u)$.

Hence we need to find $(\mu_1,\mu_2,\mu_3)=\mu(\lambda_1,\lambda_2,\lambda_3)$---the projection of the vector $(\lambda_1,\lambda_2,\lambda_3)$ on the simplex $S$ when
$$
\sum_{j=1}^3 \lambda_j = 1, \,\,\,\, \lambda_3 < 0.
$$
Recall also that the eigenvalues were labeled in the decreasing order $\lambda_1 \geq \lambda_2 \geq \lambda_3$.  We consider the following two cases: $\lambda_3<0$ and either
$$
\lambda_2 + \frac{\lambda_3}{2}\geq 0 \,\,\,\, \mbox{or} \,\,\,\, \lambda_2 + \frac{\lambda_3}{2} < 0.
$$
Case 1:  $\lambda_3 < 0$ and $\lambda_2 + \frac{\lambda_3}{2}\geq 0$.  In this case
$$
\mu_1 = \lambda_1 + \frac{\lambda_3}{2}, \,\,\,\, \mu_2 = \lambda_2 + \frac{\lambda_3}{2}\geq 0 \,\,\,\, \mbox{and}\,\,\,\, \mu_3=0.
$$
Denoting $\lambda = (\lambda_1,\lambda_2,\lambda_3)$ and $\mu = (\mu_1,\mu_2,\mu_3)$, we need to check that
$$
\langle \lambda - \mu; z - \mu \rangle \leq 0,
$$
for any $z \in S.$ We demonstrate this as follows:
\begin{align*}
\langle \lambda - \mu; z - \mu \rangle &= (\lambda_1-\mu_1)(z_1-\mu_1)+(\lambda_2-\mu_2)(z_2-\mu_2)+\lambda_3z_3 \\
&= -\frac{\lambda_3}{2}\left ( z_1 - \lambda_1 -\frac{\lambda_3}{2}\right ) -\frac{\lambda_3}{2}\left ( z_2 - \lambda_2 -\frac{\lambda_3}{2}\right ) + \lambda_3z_3\\
&= -\frac{\lambda_3}{2}(z_1+z_2) + \frac{\lambda_3}{2} + \lambda_3z_3 \\
&= -\frac{\lambda_3}{2}(1-z_3) + \frac{\lambda_3}{2} +\lambda_3z_3 = \frac{3\lambda_3z_3}{2} \leq 0.
\end{align*}

\noindent Case 2: $\lambda_2 + \frac{\lambda_3}{2} < 0$.  In this case $\mu_1 = 1$, $\mu_2 = \mu_3 = 0$.  Again, we need to check that,  for any $z \in S$ we have
$$
\langle \lambda - \mu; z - \mu \rangle \leq 0.
$$
Indeed, 
\begin{align*}
\langle \lambda - \mu; z - \mu \rangle &= (\lambda_1-1)(z_1-1)+\lambda_2z_2+\lambda_3z_3 \\
&= \left ( \lambda_1 +\frac{\lambda_3}{2} - 1\right )(z_1-1) + \left ( \lambda_2 +\frac{\lambda_3}{2}\right )z_2+\lambda_3z_3 \\
&- \frac{\lambda_3}{2}(z_1+z_2-1)\\
&= \left ( \lambda_1 +\frac{\lambda_3}{2} - 1\right )(z_1-1) + \left ( \lambda_2 +\frac{\lambda_3}{2}\right )z_2+\lambda_3z_3 \\
&+ \frac{\lambda_3}{2}z_3.
\end{align*}
We recall now that $\lambda_2+\frac{\lambda_3}{2}<0$.  Since $\lambda_1+\frac{\lambda_3}{2} + \lambda_2+\frac{\lambda_3}{2} = 1$, then $\lambda_1+\frac{\lambda_3}{2} > 1$ and we conclude that $\langle \lambda - \mu; z - \mu \rangle\leq 0$.

Finally we need to verify that $W(u) \geq W(\Pi(u))$ when $\lambda_3<0$.  Notice that in Case 2 above, we have $\Pi(u)=P_1$.  We then have
$$
W(\Pi(u))=0 \leq W(u).
$$
We consider now Case 1 above.  Recall that here we assumed that $\lambda_3 \leq \lambda_2 \leq \lambda_1$, 
$$
\lambda_3 < 0\,\,\,\,\mbox{and}\,\,\,\, \lambda_2 + \frac{\lambda_3}{2} \geq 0.
$$
Recall also that $\lambda_1+\lambda_2+\lambda_3 = 1$.  We will use the notation
$$
s = \frac{\lambda_1 + \lambda_2}{2}, \,\,\,\, t = \frac{\lambda_1-\lambda_2}{2},
$$
and observe that
$$
s = \frac{\lambda_1 + \lambda_2}{2} = \frac{1 - \lambda_3}{2} > \frac{1}{2}.
$$
Using this notation we have that
$$
\lambda_1 + \frac{\lambda_3}{2} = \frac{1 + \lambda_1-\lambda_2}{2} = \frac{1}{2}+t
$$
and
$$
\lambda_2 + \frac{\lambda_3}{2} = \frac{1 + \lambda_2-\lambda_1}{2} = \frac{1}{2}-t.
$$
Since we also have
$$
\lambda_1 = s+t, \,\,\, \lambda_2 = s-t,
$$
direct computations show that
$$
W(u) = \frac{1}{2}((s+t)^2(s+t-1)^2+(s-t)^2(s-t-1)^2 + (2s)^2(2s-1)^2).
$$
On the other hand we have
$$
W(\Pi(u)) = \left ( \frac{1}{2} + t \right )^2\left ( \frac{1}{2} - t \right )^2.
$$
Denote
$$
\psi(s,t) = \frac{1}{2}((s+t)^2(s+t-1)^2+(s-t)^2(s-t-1)^2 + (2s)^2(2s-1)^2),
$$
and observe that
$$
\psi\left ( \frac{1}{2}, t \right ) = W(\Pi(u)).
$$
To show that $W(\Pi(u)) \leq W(u)$ it suffices to show that
$$
\frac{\partial \psi}{\partial s}(s,t) \geq 0
$$
for all $s \geq 1/2$ and all $t \geq 0$.  To this end, notice first that
$$
\psi(s,t)=q(s+t)+q(s-t) + q(1-2s) = q(s+t)+q(s-t) + q(2s),
$$
where $q(t) = \frac{t^2(1-t)^2}{2}$.  Obviously
$$
\frac{\partial \psi}{\partial s}(s,t) = q'(s+t)+q'(s-t)+2q'(2s).
$$
After some algebra we arrive at
$$
\frac{\partial \psi}{\partial s}(s,t) = 6(2s-1)(s(3s-1)+t^2),
$$
which is non-negative for $s \geq 1/2$ and all $t \geq 0$.  This shows that $W(u) \geq W(\Pi(u))$, and completes the proof of the Lemma.
\qed
\end{proof}
\begin{remark}
The conclusion of Lemma \ref{proj_lemma} is valid for the more general form of the potential $W_\beta$ given by \eqref{wisborn} as long as $\beta\geq 2$. We conjecture that the conclusion is false when $0<\beta<2$ because, in this case, a geodesic connecting two nematic minima on the surface of $W_\beta$ expressed as a function of two independent eigenvalues of $u$ partially lies outside of $S$. Since the trace of $u$ is equal $1$, at least one eigenvalue of $u$ is negative outside of $S$---this violates the condition that the eigenvalues of $u$ must be between $0$ and $1$ within the framework of the LdG theory. We conclude that our approach is valid for the range of the parameters corresponding to the physically relevant case. The fact that the LdG theory fails in a deep nematic regime has been previously discussed in \cite{apala_nonphys}. The non-physicality is due to the fact that the LdG free energy is constructed as an expansion of the non-equilibrium free energy in terms of the order parameter near the temperature of the isotropic-to-nematic transition; the expansion no longer has to approximate the original energy away from the transition temperature.
\end{remark}

\begin{remark}
We will use Lemma \ref{proj_lemma} to establish that minimizers of $E_\eps$ take values in the convex hull of $\cal P$. An alternative maximum-principle-type argument showing that {\em critical points} of $E_\eps$ with boundary data in $\cal P$ have values in $S$ is given in the Appendix.
\end{remark}

Next we collect for future reference some well-known facts regarding $Q(u)$---the nearest point projection of $u \in \Sigma$ onto $\cal P$.  We start by choosing $\delta > 0$ such that, if ${\rm dist}(u;{\cal P}) < \delta$, then $Q(u)$ is well-defined and smooth in $u$.  Next, recall the classical expressions
$$
\lambda_1(u) = \sup \{e \cdot (ue) : e \in \R{3}, \,\,\, \abs{e}=1\},
$$
and
$$
\lambda_3(u) = \inf \{e \cdot (ue) : e \in \R{3}, \,\,\, \abs{e}=1\}.
$$
Also, given $A \in M_s^3(\R{})$, its Moore-Penrose inverse will be denoted by $A^\dagger$. Here $A^\dagger$ is the symmetric matrix that has the same kernel as $A$ and is the inverse of $A$ in the subspace of $\R{3}$ where $A$ is non-singular.

\begin{lemma}
The functions $\lambda_1$ and $\lambda_3$ are convex and concave, respectively.  Furthermore, whenever $u \in \Sigma$ is such that ${\rm dist}(u;{\cal P}) < \delta$, we have
$$
(\nabla_u \lambda_1)(u) = Q(u).
$$
Finally
$$
(D_u Q)(u)(A) = (D_u^2 \lambda_1)(u)(A) = -(u-\lambda_1(u)I_3)^\dagger A v-v A (u-\lambda_1(u)I_3)^\dagger .
$$
Here we use the notation $v=Q(u)$ and $(D_u Q)(u)(A)$ for the Jacobian matrix of $Q(u)$ at $u$ acting on $A$.
\end{lemma}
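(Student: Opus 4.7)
The plan is to rely on standard first-order perturbation theory for a simple eigenvalue, which applies here because the hypotheses $u\in\Sigma$ and $\mathrm{dist}(u,\cal P)<\delta$ force the eigenvalues of $u$ to lie close to $(1,0,0)$, so that $\lambda_1(u)$ is simple and well-separated from the other two eigenvalues. Convexity of $\lambda_1$ and concavity of $\lambda_3$ follow immediately from the stated variational characterizations: $\lambda_1$ is a pointwise supremum over the unit sphere of the linear (hence affine) functionals $u\mapsto e\cdot(ue)$ and is therefore convex, while $\lambda_3$ is the analogous pointwise infimum and is therefore concave.

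For the gradient identity, I would fix a smooth local choice of unit eigenvector $e(u)$ with $u\,e(u)=\lambda_1(u)\,e(u)$, so that $Q(u)=e(u)e(u)^T$. Differentiating $\lambda_1(u)=e(u)\cdot(u\,e(u))$ in a direction $A\in M_s^3(\R{})$ and using $u\,e=\lambda_1\,e$ together with $e\cdot\dot e=0$ (coming from $|e|^2\equiv 1$), the two contributions involving $\dot e$ cancel and only the $A$-term survives, giving $(D_u\lambda_1)(u)(A)=e\cdot(Ae)=\langle A,Q(u)\rangle$. This proves $\nabla_u\lambda_1=Q$, and differentiating this identity once more yields the equality $(D_uQ)(u)=(D_u^2\lambda_1)(u)$ asserted in the statement.

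To compute $(D_uQ)(u)(A)$ explicitly, I would differentiate the eigenvalue equation $u\,e(u)=\lambda_1(u)\,e(u)$ in direction $A$ to obtain $(u-\lambda_1 I_3)\dot e=\dot\lambda_1\,e-A\,e$. The constraint $e\cdot\dot e=0$ places $\dot e$ in the range of $I_3-Q(u)$, and since $\dot\lambda_1=e\cdot Ae$ the right-hand side likewise has vanishing component along $e$; hence both sides lie in the range of $I_3-Q(u)$, on which $u-\lambda_1 I_3$ is invertible with inverse equal to $(u-\lambda_1 I_3)^\dagger$. Using also $(u-\lambda_1 I_3)^\dagger e=0$ to kill the $\dot\lambda_1\,e$ term, I obtain $\dot e=-(u-\lambda_1 I_3)^\dagger A\,e$, and the product rule then gives
$$
(D_uQ)(u)(A)=\dot e\,e^T+e\,\dot e^T=-(u-\lambda_1 I_3)^\dagger A\,v-v\,A\,(u-\lambda_1 I_3)^\dagger,
$$
where symmetry of the Moore-Penrose inverse of the symmetric matrix $u-\lambda_1 I_3$ is used to put the second term in the stated form.

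The only real subtlety is verifying that $e(u)$ and $(u-\lambda_1(u)I_3)^\dagger$ depend smoothly on $u$ throughout the neighborhood where $\mathrm{dist}(u,\cal P)<\delta$; both follow from the uniform spectral gap guaranteed by the simplicity of $\lambda_1$ on that set, which permits the implicit function theorem to be applied to the eigenvalue equation (or, equivalently, lets the Riesz spectral projection representation of $Q$ be used). No delicate estimates or limiting arguments are required, so this step is bookkeeping rather than a genuine obstacle.
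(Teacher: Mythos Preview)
Your argument is correct and follows essentially the same approach as the paper: both differentiate the eigenvalue relation and invert on the orthogonal complement of the top eigenspace via the Moore--Penrose inverse. The only cosmetic difference is that you work with a local unit eigenvector $e(u)$ and reconstruct $Q=ee^T$, whereas the paper differentiates the projection identity $(u-\lambda_1 I_3)v=0$ directly (together with $v^2=v$), thereby sidestepping the sign ambiguity in the choice of $e$; either packaging yields the stated formula.
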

\begin{remark}
For $v \in \cal P$, the expression $(D_vQ)(v)$ is the orthogonal projection from $M_s^3(\R{})$ onto $T_v \cal P$, the tangent plane to $\cal P$ at $v$.
\end{remark}

\begin{proof}
The fact that $\lambda_1$, $\lambda_3$ are convex and concave, respectively, can be obtained via a standard argument.  Furthermore, it is well-known that
$$
\nabla_u \lambda_1(u) = Q(u).
$$
To obtain the last assertion of the lemma, let $v = Q(u)$ and note that
$$
(u-\lambda_1(u)I_3)v = v(u-\lambda_1(u)I_3) = 0.
$$
Denote by $e_{i,j}:=e_i\otimes e_j$ the matrix with $1$ at the $(i,j)$ and zeros everywhere else, and differentiate the left hand side of the equation above with respect to $u_{i,j}$ to obtain
$$
0 = \left (e_{i,j} - \frac{\partial \lambda_1}{\partial u_{i,j}}(u)I_3 \right )v + (u-\lambda_1(u)I_3)\frac{\partial v}{\partial u_{i,j}}.
$$
From here
$$
(u-\lambda_1(u)I_3)\frac{\partial v}{\partial u_{i,j}} = (u-\lambda_1(u)I_3)(I_3-v)\frac{\partial v}{\partial u_{i,j}} = -\left (e_{i,j} - \frac{\partial \lambda_1}{\partial u_{i,j}}(u)I_3 \right )v,
$$
then
$$
(I_3-v)\frac{\partial v}{\partial u_{i,j}} = -(u-\lambda_1(u)I_3)^\dagger\left (e_{i,j} - \frac{\partial \lambda_1}{\partial u_{i,j}}(u)I_3 \right )v = -(u-\lambda_1(u)I_3)^\dagger e_{i,j} v.
$$
Taking transpose we obtain
$$
\frac{\partial v}{\partial u_{i,j}}(I_3-v) = -v e_{j,i} (u-\lambda_1(u)I_3)^\dagger .
$$
Adding these last two equations we obtain
$$
2\frac{\partial v}{\partial u_{i,j}} - v\frac{\partial v}{\partial u_{i,j}} - \frac{\partial v}{\partial u_{i,j}}v = -(u-\lambda_1(u)I_3)^\dagger e_{i,j} v-v e_{j,i} (u-\lambda_1(u)I_3)^\dagger .
$$
We finally recall that $v = Q(u) \in \cal P$,  hence $v = v^2$. Differentiating this expression, we obtain
$$
v\frac{\partial v}{\partial u_{i,j}} + \frac{\partial v}{\partial u_{i,j}}v = \frac{\partial v}{\partial u_{i,j}}.
$$
All this yields
$$
\frac{\partial v}{\partial u_{i,j}} = -(u-\lambda_1(u)I_3)^\dagger e_{i,j} v-v e_{j,i} (u-\lambda_1(u)I_3)^\dagger .
$$
Taking now $A = (a_{i,j}) \in M_s^3(\R{})$, we multiply the above equation by $a_{i,j}$ and add in $i,j$ to obtain
$$
(D_uQ)(u)(A) = -(u-\lambda_1(u)I_3)^\dagger A v-v A (u-\lambda_1(u)I_3)^\dagger,
$$
which is the last conclusion of the lemma.
\qed
\end{proof}

\begin{lemma}\label{Jerrard_form}
There is a distance $r > 0$, a constant $C > 0$, and an integer $n \geq 2$ such that, for any $\partial B_s(a) \subset\subset \Omega_L$ and any $u \in W^{1,2}(\partial B_s(a); \Sigma)$ with ${\rm dist}(u;{\cal P}) < r$ on $\partial B_s(a)$, we have
$$
\int_{\partial B_s(a)} e_\eps(u) \geq \int_{\partial B_s(a)} \left ( \frac{\rho^n \abs{\nabla_\tau Q(u)}^2}{2} + \frac{\abs{\nabla_\tau \rho}^2}{C} + \frac{1}{C\eps^2}\abs{1-\rho}^2\right ).
$$
Here $\rho = \abs{u}$, and $\nabla_\tau$ denotes the tangential derivative on $\partial B_r(a)$.
\end{lemma}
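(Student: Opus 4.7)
The strategy is to establish the inequality pointwise at a.e.\ $t \in \partial B_s(a)$ (parametrizing by arc-length, with $\dot{}\,$ denoting $\nabla_\tau$) and then integrate. Fix $r > 0$ small enough that for every $u \in \Sigma$ with $\mathrm{dist}(u, {\cal P}) < r$ the top eigenvalue $\lambda_1(u)$ is simple (so $v:=Q(u)$ is smooth on this set) and the gap $\lambda_1 - \lambda_2 \geq 1/2$. By perturbation theory the ordered spectrum $\lambda_1 \geq \lambda_2 \geq \lambda_3 \geq 0$ (with $\sum_i \lambda_i = 1$) is uniformly close to $(1,0,0)$, and locally in $t$ we may write $u(t) = R(t) D(t) R(t)^T$ with $R \in O(3)$ smooth and $D = \mathrm{diag}(\lambda_1, \lambda_2, \lambda_3)$. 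The potential bound is then immediate: from $W(u) = \frac{1}{2}\sum_i \lambda_i^2(1-\lambda_i)^2 \geq \frac{1}{2}\lambda_1^2(\lambda_2+\lambda_3)^2$, the Taylor expansion $1 - \rho = (\lambda_2 + \lambda_3) + O((\lambda_2+\lambda_3)^2)$ at $(1,0,0)$, and $\lambda_1 \geq 1/2$, one obtains $W(u) \geq C^{-1}(1-\rho)^2$.

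For the gradient, set $A := \dot R R^T \in M_a^3(\R{})$. Then $\dot u = [A, u] + R \dot D R^T$, and since $[A,u]$ has zero diagonal in the eigenbasis while $R \dot D R^T$ is diagonal, the two summands are Frobenius-orthogonal: $|\dot u|^2 = |[A,u]|^2 + \sum_i \dot\lambda_i^2$. Writing the strict-upper-triangular entries of $A$ in the eigenbasis as $(a, b, c)$, a direct calculation gives
\[
|[A,u]|^2 = 2a^2(\lambda_1 - \lambda_2)^2 + 2b^2(\lambda_1 - \lambda_3)^2 + 2c^2(\lambda_2-\lambda_3)^2,
\]
while $v = Q(u) = R E_1 R^T$ (with $E_1 = \mathrm{diag}(1,0,0)$) satisfies $\dot v = [A, v]$ and $|\dot v|^2 = 2(a^2 + b^2)$. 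The key spectral-gap estimate $(\lambda_1 - \lambda_2)^2 \geq \rho^n$ follows from the Taylor identity $(\lambda_1 - \lambda_2)^2 - \rho^n = (n-4)\lambda_2 + (n-2)\lambda_3 + O((\lambda_2+\lambda_3)^2)$ at $(1,0,0)$: any integer $n \geq 5$ works on a sufficiently small neighborhood. Since $\lambda_1 - \lambda_3 \geq \lambda_1 - \lambda_2$, this gives $|[A,u]|^2 \geq \rho^n|\dot v|^2$. For the diagonal part, using $\sum_i \dot\lambda_i = 0$ one writes $\rho\dot\rho = \sum_i(\lambda_i - \tfrac{1}{3})\dot\lambda_i$, and Cauchy--Schwarz gives $\sum_i \dot\lambda_i^2 \geq C^{-1}\dot\rho^2$.

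Combining these estimates yields the pointwise bound
\[
\frac{|\dot u|^2}{2} + \frac{W(u)}{\varepsilon^2} \geq \frac{\rho^n|\dot v|^2}{2} + \frac{\dot\rho^2}{C} + \frac{(1-\rho)^2}{C\varepsilon^2},
\]
which integrates along $\partial B_s(a)$ to the claim. The principal obstacle is the spectral-gap estimate: both $(\lambda_1 - \lambda_2)^2$ and $\rho^n$ equal $1$ on $\cal P$, so the exponent $n$ must be taken large enough that the linear Taylor coefficients have the correct sign. A minor technicality is that the smooth decomposition $u = RDR^T$ may break down where $\lambda_2 = \lambda_3$, but that set has measure zero along the $W^{1,2}$ curve, so the pointwise inequality still holds a.e.\ and integrates correctly.
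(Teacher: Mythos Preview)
Your argument is correct and reaches the same pointwise inequality as the paper, but by a genuinely different route. The paper never fully diagonalizes $u$: it decomposes $|\nabla_e u|^2 = |T_v(\nabla_e u)|^2 + |(I_{3\times 3}-T_v)(\nabla_e u)|^2$ via the orthogonal projection $T_v$ onto $T_v{\cal P}$, then uses the Moore--Penrose formula $(D_uQ)(u)(A) = -(u-\lambda_1 I_3)^\dagger A v - vA(u-\lambda_1 I_3)^\dagger$ to control $|\nabla_e v|^2$ by $|T_v(\nabla_e u)|^2 + C(1-\lambda_1)|\nabla_e u|^2$; the key algebraic step there is the inequality $1 - C(1-\lambda_1) \geq |u|^n$ for $n$ large and $r$ small. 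Your route is explicitly spectral and replaces that step by the sharper gap estimate $(\lambda_1 - \lambda_2)^2 \geq \rho^n$. Your off-diagonal/diagonal split cleanly separates the $\dot v$ and $\dot\rho$ contributions in one stroke, whereas the paper's frame-free argument trades this directness for never having to worry about the lower eigenspace.

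One point needs tightening. Your claim that the set $\{\lambda_2 = \lambda_3\}$ has measure zero along the curve is unjustified: a $W^{1,2}$ curve may satisfy $\lambda_2(u(t)) = \lambda_3(u(t))$ on a set of positive measure, and there $R(t)$ need not be differentiable, so $A = \dot R R^T$ is undefined. The remedy is simply to avoid $\dot R$ altogether. At a.e.\ differentiability point of $u$, choose \emph{any} orthonormal eigenbasis of $u(t)$ and write $\dot u = (b_{ij})$ in that basis; then $|\dot u|^2 = \sum_i b_{ii}^2 + 2\sum_{i<j}b_{ij}^2$, while $\dot v = (D_uQ)(u)[\dot u]$ gives
\[
|\dot v|^2 = 2\Big(\tfrac{b_{12}^2}{(\lambda_1-\lambda_2)^2} + \tfrac{b_{13}^2}{(\lambda_1-\lambda_3)^2}\Big),
\]
a quantity invariant under rotations in the $e_2,e_3$ plane and hence well-defined even when $\lambda_2 = \lambda_3$. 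Your spectral-gap estimate then yields $2\sum_{i<j}b_{ij}^2 \geq \rho^n|\dot v|^2$, and your Cauchy--Schwarz step, rewritten as $\rho\dot\rho = \mathrm{tr}(u\dot u) = \sum_i(\lambda_i - \tfrac13)b_{ii}$, gives $\sum_i b_{ii}^2 \geq C^{-1}\dot\rho^2$ for any choice of eigenbasis. With this adjustment the pointwise inequality holds a.e.\ without ever differentiating the frame.
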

\begin{proof}
To prove this we write $v = Q(u)$, and notice that, if ${\rm dist}(u;{\cal P})$ is small enough, then
$$
{\rm dist}^2(u;{\cal P}) = \abs{u-v}^2 = (1-\lambda_1)^2 + \lambda_2^2 + \lambda_3^2.
$$
Recall that in this lemma we have $u \in \Sigma$, so $1\geq \lambda_1\geq \lambda_2 \geq\lambda_3 \geq 0$.  In particular, if ${\rm dist}(u;{\cal P}) < r$, we have
$$
\abs{1-\lambda_1} = 1-\lambda_1 \leq {\rm dist}(u;{\cal P})< r,
$$
so $\lambda_1 > 1-r$.  We also have $0 \leq \lambda_2 < r$.  This shows that 
\begin{equation}\label{eigenvals_different}
\lambda_1 - \lambda_3 \geq \lambda_1 - \lambda_2 > 1-2r.
\end{equation}
Recall next that
$$
\frac{\partial v}{\partial x_j} = (D_uQ)(u)\left ( \frac{\partial u}{\partial x_j}\right ).
$$
By the previous lemma we have
$$
\frac{\partial v}{\partial x_j} = v \frac{\partial u}{\partial x_j}(\lambda_1(u)I_3-u)^\dagger + (\lambda_1(u)I_3-u)^\dagger \frac{\partial u}{\partial x_j}v.
$$
For $v\in \cal P$ and $A \in M_s^3(\R{})$ we write
$$
T_v(A) = vA(I_3-v) + (I_3-v)Av,
$$
the projection of $A$ onto $T_v \cal P$.  Then, denoting $\nabla_e f = e\cdot \nabla f$ where $e \in \R{2}$ is a unit vector, we observe that
\begin{equation}\label{split_nabla_tan_perp}
\abs{\nabla_e u}^2 = \abs{T_v(\nabla_e u)}^2 + \abs{(I_{3\times 3}-T_v)(\nabla_e u)}^2.
\end{equation}
We recall that $I_{3\times 3}$ denotes the identity in $M^3(\R{})$.  This last equality holds because $T_v$ is an orthogonal projection in $M^3(\R{})$.  Now we have the following
\begin{align*}
\frac{\partial v}{\partial x_j} &= v \frac{\partial u}{\partial x_j}(\lambda_1(u)I_3-u)^\dagger + (\lambda_1(u)I_3-u)^\dagger \frac{\partial u}{\partial x_j}v \\
&= T_v\left ( \frac{\partial u}{\partial x_j}\right ) \\
&+ v\frac{\partial u}{\partial x_j}((\lambda_1I_3-u)^\dagger - (I_3-v))\\
&+((\lambda_1I_3-u)^\dagger - (I_3-v))\frac{\partial u}{\partial x_j}v.
\end{align*}
This shows that
\begin{align}
\abs{\nabla_e v}^2 = &\abs{T_v(\nabla_e u)}^2 \nonumber \\
&+ \abs{ v(\nabla_e u)((\lambda_1I_3-u)^\dagger - (I_3-v))
+((\lambda_1I_3-u)^\dagger - (I_3-v))(\nabla_e u)v}^2 \nonumber \\
&+ 2\langle T_v(\nabla_e u); v(\nabla_e u)((\lambda_1I_3-u)^\dagger - (I_3-v))
\rangle. \label{diff_moore_pen_I_min_v}
\end{align}
Let us recall now that
$$
u = \lambda_1 v + \lambda_2v_2 + \lambda_3v_3.
$$
for some rank-one projections $v_2, v_3 \in \cal P$ (with $v_iv_j=v_jv_i=\delta_{i,j}v_j$, $j=1$, ..., $3$, $v_1=v$) since $u \in \Sigma$.
Thus we have
$$
(\lambda_1I_3 - u)^\dagger = \frac{1}{\lambda_1-\lambda_2}v_2+\frac{1}{\lambda_1-\lambda_3}v_3,
$$
and also
$$
(\lambda_1I_3 - u)^\dagger - (I_3-v) = \left ( \frac{1}{\lambda_1-\lambda_2} -1 \right )v_2+\left (\frac{1}{\lambda_1-\lambda_3}-1 \right )v_3.
$$
Notice that, for $r>0$ small enough and ${\rm dist}(u;{\cal P}) < r$, this expression makes sense because of (\ref{eigenvals_different}).

Using the fact that the $\lambda_j$ are decreasing in $j$ and that $\lambda_2 \leq 1-\lambda_1$ we see that
$$
\lambda_1-\lambda_2 \leq \lambda_1-\lambda_3,
$$
and then
$$
\abs{(\lambda_1I_3 - u)^\dagger - (I_3-v)} \leq 2\frac{1-\lambda_1+\lambda_2}{\lambda_1-\lambda_2} \leq \frac{C (1-\lambda_1)}{1-2r}.
$$
Here the constant $C > 0$ is independent of $u$ and $r>0$.

Using this expression we can now go back to (\ref{diff_moore_pen_I_min_v}) to obtain
$$
\abs{\nabla_e v}^2 \leq \abs{T_v(\nabla_e u)}^2 + \frac{C(1-\lambda_1)}{1-2r}\abs{\nabla_e u}^2.
$$
By choosing, for example $0 < r < 1/4$, we get
\begin{equation}\label{est_v}
\abs{\nabla_e v}^2 \leq \abs{T_v(\nabla_e u)}^2 + C(1-\lambda_1)\abs{\nabla_e u}^2,
\end{equation}
where $C > 0$ is independent of $r \in ]0,1/4]$.

Next, we observe that 
$$
\frac{\partial \abs{u}^2}{\partial x_j} = 2\abs{u} \frac{\partial \abs{u}}{\partial x_j} = 2\left\langle u; \frac{\partial u}{\partial x_j} \right\rangle.
$$
In other words, 
$$
\frac{\partial \abs{u}}{\partial x_j} = \left\langle \frac{u}{\abs{u}}; \frac{\partial u}{\partial x_j} \right\rangle.
$$
From here we obtain
\begin{align*}
\frac{\partial \abs{u}}{\partial x_j} &= \left\langle \frac{u}{\abs{u}} - v; \frac{\partial u}{\partial x_j} \right\rangle + \left\langle v; \frac{\partial u}{\partial x_j} \right\rangle.
\end{align*}
Since $(I_{3\times 3}-T_v)(v) = v$ and $\abs{v}=1$, we find that
$$
\abs{\nabla_e \abs{u}} \leq 2\frac{\abs{u-v}}{\abs{u}}\abs{\nabla_e u} + \abs{(I_{3\times 3}-T_v)(\nabla_e u ) }.
$$
Further, due to $0 \leq \lambda_3 \leq \lambda_2 \leq 1-\lambda_1$, we have that $\abs{u-v} \leq 3(1-\lambda_1)$.  This and $\abs{u}\geq \lambda_1> 1-r$ lead to the following inequality
\begin{equation}\label{est_modulus}
\abs{\nabla_e \abs{u}}^2 \leq C(1-\lambda_1)\abs{\nabla_e u}^2 + C\abs{(I_{3\times 3}-T_v)(\nabla_e u) }^2,
\end{equation}
where $C>0$ can be chosen independent of $r\in ]0,\frac{1}{4}]$.  We now use (\ref{est_modulus}) and (\ref{est_v}) in (\ref{split_nabla_tan_perp}) to obtain
$$
\abs{\nabla_e u}^2 \geq \abs{\nabla_e v} + \frac{1}{C}\abs{\nabla_e \abs{u}}^2 - C(1-\lambda_1)\abs{\nabla_e u}^2,
$$
or
$$
(1+C(1-\lambda_1))\abs{\nabla_e u}^2 \geq \abs{\nabla_e v}^2 + \frac{1}{C}\abs{\nabla_e \abs{u}}^2.
$$
This implies that
\begin{equation}\label{almost_rad_phase}
\abs{\nabla_e u}^2 \geq (1-C(1-\lambda_1))\abs{\nabla_e v}^2 + \frac{1}{C}\abs{\nabla_e \abs{u}}^2.
\end{equation}
Next, observe that, since the eigenvalues of $u$ are non-negative and add up to $1$, it follows that $\abs{u}\leq 1$ and
$$
1 = (\lambda_1 + \lambda_2 + \lambda_3)^2.
$$
From here we find that
$$
2(1-\abs{u}) \geq 1-\abs{u}^2 = 1-\lambda^2_1-\lambda^2_2-\lambda^2_3 = 2(\lambda_1\lambda_2+\lambda_1\lambda_3+\lambda_2\lambda_3).
$$
This implies that
$$
(1-\abs{u}) \geq \lambda_1(1-\lambda_1) \geq (1-r)(1-\lambda_1),
$$
so
$$
1-\lambda_1 \leq \frac{1-\abs{u}}{1-r}.
$$
Next, let $n \geq 1$ be an integer to be chosen later and write
\begin{align*}
1-C(1-\lambda_1) &= \abs{u}^n + (1-\abs{u}^n) - C(1-\lambda_1) \\
&= \abs{u}^n + (1-\abs{u})\sum_{k=0}^{n-1}\abs{u}^k - C(1-\lambda_1) \\
&\geq \abs{u}^n + (1-\abs{u})\left ( \sum_{k=0}^{n-1}\abs{u}^k - \frac{C}{1-r}\right ) \\
&\geq  \abs{u}^n + (1-\abs{u})\left ( \sum_{k=0}^{n-1}(1-r)^k - \frac{C}{1-r}\right ).
\end{align*}
Now it is clear that we can choose $r > 0$ small enough and $n \geq 1$ large enough so that
$$
\sum_{k=0}^{n-1}(1-r)^k - \frac{C}{1-r} = \frac{1-(1-r)^n}{r} - \frac{C}{1-r}\geq 0.
$$
With such $r>0$ and $n \geq 1$ we obtain
$$
1-C(1-\lambda_1) \geq \abs{u}^n
$$
if ${\rm dist}(u; {\cal P}) < r$.  Going back to (\ref{almost_rad_phase}), we obtain
$$
\abs{\nabla_e u}^2 \geq \abs{u}^n\abs{\nabla_e v}^2 + \frac{1}{C}\abs{\nabla_e \abs{u}}^2.
$$
Finally, we observe that $2(1-\lambda_1) \geq 1-\lambda_1^2\geq 0$, so
\begin{align*}
8W(u) &\geq 4\lambda_1^2(1-\lambda_1)^2 \geq (1-r)^2(1-\lambda_1^2)^2 \\
&\geq (1-r)^2(1-\abs{u}^2)^2 \geq (1-r)^2(1-\abs{u})^2.
\end{align*}
Therefore, if $r>0$ is small enough, ${\rm dist}(u;{\cal P}) < r$, and $n \geq 1$ is large enough, then
\begin{align*}
e_\eps(u) &=  \frac{\abs{\nabla u}^2}{2} + \frac{W(u)}{\eps^2}\\
&\geq \frac{\abs{u}^n \abs{\nabla_\tau Q(u)}^2}{2} + \frac{\abs{\nabla_\tau \abs{u}}^2}{C} + \frac{1}{C\eps^2}\abs{1-\abs{u}}^2.
\end{align*}
The conclusion of the lemma follows since $\rho = \abs{u}$.
\qed
\end{proof}
Next we recall several lemmas that can be proven exactly as in \cite{BBH}.
\begin{lemma}\label{bdd_potential_int}
If $\Omega$ is star-shaped, there exists a constant $C > 0$ independent of $\eps > 0$ such that
$$
\frac{1}{\eps^2} \int_\Omega W(u) + \int_{\partial \Omega} \abs{\nabla u \cdot \nu}^2 \leq C.
$$
\end{lemma}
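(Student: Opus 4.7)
The argument is a Pohozaev identity adapted to our matrix-valued, constrained setting, parallel to Theorem~III.2 of \cite{BBH}. First, the minimizer $u_\eps$ satisfies the constrained Euler--Lagrange equation
\[
-\Delta u_\eps + \frac{1}{\eps^2}\nabla_u W(u_\eps) = \lambda_\eps I_3,
\]
where $\lambda_\eps$ is a scalar Lagrange multiplier arising from the pointwise trace constraint $\tr(u_\eps)=1$; because the orthogonal complement of $F_0 = \{A \in M_s^3(\R{}) : \tr(A) = 0\}$ inside $M_s^3(\R{})$ is spanned by $I_3$, no other multiplier can appear. Standard elliptic regularity for smooth boundary data gives $u_\eps \in C^\infty(\overline{\Omega}; F_1)$, which legitimizes all the pointwise manipulations below.

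The plan is to pick a center $x_0$ about which $\Omega$ is star-shaped, take the Hilbert--Schmidt inner product of the Euler--Lagrange equation with the matrix-valued vector field $(x-x_0)\cdot \nabla u_\eps$, and integrate over $\Omega$. The Lagrange multiplier contribution vanishes identically since
\[
\langle I_3,\, (x-x_0)\cdot \nabla u_\eps\rangle = (x-x_0)\cdot \nabla(\tr u_\eps) = 0.
\]
Applying the scalar Pohozaev identity componentwise (in two dimensions the bulk $\abs{\nabla u}^2$ term does not appear), using $(x-x_0)\cdot\nabla(W\circ u_\eps) = \langle \nabla_u W(u_\eps),(x-x_0)\cdot\nabla u_\eps\rangle$, and observing that $W(u_\eps)\equiv 0$ on $\partial\Omega$ because $g$ takes values in $\cal P$, one arrives after decomposing $\nabla u_\eps = (\nabla u_\eps\cdot\nu)\nu + (dg/d\tau)\tau$ on $\partial\Omega$ at
\begin{align*}
\frac{2}{\eps^2}\int_\Omega W(u_\eps) &+ \frac{1}{2}\int_{\partial\Omega}((x-x_0)\cdot\nu)\,\abs{\nabla u_\eps\cdot\nu}^2 \\
&= \frac{1}{2}\int_{\partial\Omega}((x-x_0)\cdot\nu)\,\abs{dg/d\tau}^2 \;-\;\int_{\partial\Omega}((x-x_0)\cdot\tau)\left\langle \nabla u_\eps\cdot\nu,\, \frac{dg}{d\tau}\right\rangle.
\end{align*}

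To close the estimate I would invoke star-shapedness, which gives $(x-x_0)\cdot\nu \geq \delta > 0$ on $\partial\Omega$. The first boundary integral on the right-hand side is bounded by a constant depending only on $g$ and $\Omega$. The cross term is controlled by Young's inequality,
\[
\abs{((x-x_0)\cdot\tau)\langle \nabla u_\eps\cdot\nu,\, dg/d\tau\rangle} \leq \tfrac{\delta}{4}\abs{\nabla u_\eps\cdot\nu}^2 + C_\delta\,\abs{dg/d\tau}^2,
\]
so that $\tfrac{\delta}{4}\int_{\partial\Omega}\abs{\nabla u_\eps\cdot\nu}^2$ can be absorbed into the left-hand side. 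The remaining inequality simultaneously controls $\eps^{-2}\int_\Omega W(u_\eps)$ and $\int_{\partial\Omega}\abs{\nabla u_\eps\cdot\nu}^2$ by a constant depending only on $g$ and $\Omega$, which is exactly the claim. The only mildly subtle points are the precise form of the Lagrange multiplier and the up-to-the-boundary regularity needed to manipulate the equation pointwise; everything else is a routine componentwise extension of the scalar Pohozaev identity, exactly as in \cite{BBH}.
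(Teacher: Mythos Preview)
Your proof is correct and is precisely the argument the paper intends: the authors' proof is a single line, ``This follows from Pohozaev's identity, as in \cite{BBH},'' and you have simply written out that Pohozaev computation in the constrained matrix-valued setting, including the observation that the Lagrange multiplier term drops out because $\tr u_\eps$ is constant. There is no substantive difference in approach.
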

\begin{proof}
This follows from Pohozaev's identity, as in \cite{BBH}.
\qed
\end{proof}
\begin{lemma}\label{gd_ball_struct}
Let $C > 0$ be such that
\begin{equation}\label{potential_lips}
\abs{\nabla_x W(u)} \leq \frac{C}{\eps}
\end{equation}
in $\Omega$.  For all $0 < r \leq 2C$ there are positive numbers $\lambda_0, \mu_0 > 0$ such that for all $l \geq \lambda_0\eps$ and all $x_0 \in \overline{\Omega}$, 
$$
\frac{1}{\eps^2}\int_{\Omega \cap B_{2l}(x_0)} W(u) \leq \mu_0 \,\,\,\, \Rightarrow \,\,\,\, W(u(x)) \leq r \,\,\,\,\mbox{for all} \,\,\,\, x \in \Omega \cap B_l(x_0).
$$
\end{lemma}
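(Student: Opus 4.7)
The plan is to prove Lemma \ref{gd_ball_struct} by a straightforward contradiction argument exploiting the hypothesis that $W(u)$ is Lipschitz with constant $C/\eps$. Suppose, contrary to the conclusion, that there exists a point $x \in \Omega \cap B_l(x_0)$ at which $W(u(x)) > r$. Using the gradient bound (\ref{potential_lips}), the mean value theorem gives
$$
W(u(y)) \geq W(u(x)) - \frac{C}{\eps}|y-x| \geq r - \frac{C}{\eps}|y-x|
$$
for every $y \in \R{2}$. In particular $W(u(y)) \geq r/2$ for every $y$ in the ball $B_\rho(x)$ with radius $\rho = \eps r/(2C)$.

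Next, I want this small ball to sit inside $B_{2l}(x_0)$ so that the mass it contributes is controlled by the hypothesis. Since $x \in B_l(x_0)$, $B_\rho(x) \subset B_{l+\rho}(x_0)$, so it suffices to require $\rho \leq l$, i.e. $l \geq \eps r/(2C)$. This fixes the choice $\lambda_0 = r/(2C)$. The other piece is a uniform lower bound on the Lebesgue measure of $B_\rho(x) \cap \Omega$ valid even when $x$ is near $\partial \Omega$: because $\Omega$ is smooth and bounded, there is a constant $\kappa > 0$ (depending only on $\Omega$) such that $|B_\rho(x) \cap \Omega| \geq \kappa \rho^2$ for every $x \in \overline{\Omega}$ and every $\rho$ smaller than some fixed geometric scale (this is the standard interior cone property). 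By shrinking $\lambda_0$ further if necessary we may assume $\rho$ is below that scale.

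Combining these two observations,
$$
\frac{1}{\eps^2}\int_{\Omega \cap B_{2l}(x_0)} W(u) \geq \frac{1}{\eps^2}\int_{\Omega \cap B_\rho(x)} W(u) \geq \frac{1}{\eps^2}\cdot\frac{r}{2}\cdot \kappa \rho^2 = \frac{\kappa r^3}{8 C^2}.
$$
Choosing $\mu_0$ to be any positive number strictly less than $\kappa r^3/(8C^2)$ contradicts the hypothesis that the rescaled integral is $\leq \mu_0$, and thereby forces $W(u(x)) \leq r$ throughout $\Omega \cap B_l(x_0)$.

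The only nontrivial point in the argument is the uniform lower bound on $|B_\rho(x) \cap \Omega|$ when $x$ approaches $\partial \Omega$; this is where the smoothness of $\Omega$ gets used, and it is what makes the conclusion hold on $\overline{\Omega}$ rather than merely in the interior. Everything else is a direct transcription of the Bethuel--Brezis--Hélein interior-estimate lemma (see \cite{BBH}), whose proof uses neither the target manifold nor the specific form of the potential beyond the Lipschitz bound (\ref{potential_lips}).
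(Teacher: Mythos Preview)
Your argument is correct and essentially identical to the paper's proof, which also proceeds by contradiction, chooses the same $\rho = \eps r/(2C)$ and $\lambda_0 = r/(2C)$, and invokes the same lower bound $\abs{\Omega \cap B_\rho(x)} \geq \kappa \rho^2$ coming from smoothness of $\Omega$. One small slip in your write-up: shrinking $\lambda_0$ has no effect on $\rho$, but since the hypothesis $r \leq 2C$ already gives $\rho = \eps r/(2C) \leq \eps$, the geometric lower bound applies automatically for small $\eps$ without any adjustment.
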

\begin{proof}
Again, the proof of this statement is exactly as in \cite{BBH}.  We pick $x_0 \in \overline{\Omega}$ and assume that there is $y_0 \in B_l(x_0)$ with $W(u(y_0)) \geq r$.  From (\ref{potential_lips}) we obtain
\begin{align*}
W(u(x)) &= W(u(y_0)) + W(u(x)) - W(u(y_0)) \\
&\geq W(u(y_0)) - \frac{C}{\eps}\abs{x - y_0} \\
&\geq r - \frac{C\rho}{\eps},
\end{align*}
for all $x \in B_\rho(y_0)$.  Choose $\rho = \frac{\eps r}{2C}$.  Then,
$$
W(u(x)) \geq \frac{r}{2}
$$
for all $x \in B_\rho(y_0)$.

Observe now that there is a number $\alpha > 0$ such that $\abs{\Omega \cap B_r(x)} \geq \alpha r^2$ for all $x \in \overline{\Omega}$ and all $0 < r \leq 1$. Further, $y_0 \in B_\rho(x_0)$ implies $B_\rho(y_0) \subset B_{2l}(x_0)$, whenever $l \geq \rho = \frac{\eps r}{2C}$. We conclude that
$$
\int_{\Omega \cap B_{2l}(x_0)} W(u) \geq \int_{\Omega \cap B_{\rho}(y_0)} W(u) \geq \frac{r}{2} \alpha \rho^2 = \frac{\alpha r^3 \eps^2}{4C^2}.
$$
Set $\lambda_0 = \frac{r}{2C}$ and $0 < \mu_0 < \frac{\alpha r^3}{4C^2}$.  This proves the lemma.
\qed
\end{proof}

We assume now that $g: \partial \Omega \to \cal P$ represents a non-contractible curve in $\cal P$.  Recall the definition of the smooth open set $\Omega_L$ given in the first paragraph of this section. In particular, we may consider $u_\eps$ to be defined in $\Omega_L$, but independent of $\eps$ in $\Omega_L\setminus \Omega$.  Our next lemma is the following
\begin{lemma}\label{lemma_one_sing}
Let $u_\eps \in W^{1,2}(\Omega;F_1)$ be a minimizer of $E_\eps$ among $u \in W^{1,2}(\Omega;F_1)$ such that $u=g$ on $\partial \Omega$.  There is a single $a \in \overline{\Omega}$ with the following property: there is a constant $C > 0$ such that, for every $R > 0$ there is an $\eps_0> 0$ such  that, for every $0 < \eps \leq \eps_0$ we have
$$
\int_{\Omega_L \setminus B_R(a)}e_\eps(u_\eps) \leq C.
$$
\end{lemma}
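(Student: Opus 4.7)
The plan is to adapt the Bethuel-Brezis-H\'elein strategy for Ginzburg-Landau vortices to the $\cal P$-valued setting, using that $\pi_1(\cal P)=\mathbb{Z}/2$ with minimum closed-geodesic length $L_0$. The first step is a sharp upper bound $E_\eps(u_\eps)\le\frac{L_0^2}{4\pi}|\ln\eps|+C$, obtained by constructing an explicit competitor: fix $a^*\in\Omega$ and a closed geodesic $\gamma:S^1\to\cal P$ of length $L_0$ in the homotopy class of $g$; interpolate $g$ with $\gamma(\theta_{a^*})$ in a collar of $\partial\Omega$, use $\gamma(\theta_{a^*})$ in the bulk outside $B_\eps(a^*)$, and fill $B_\eps(a^*)$ by a linear interpolation in $F_1$ to a fixed center value. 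The polar-coordinate computation is standard, and minimality of $u_\eps$ transfers the bound.

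Next I localize the energy concentration. \lemref{bdd_potential_int} together with \lemref{gd_ball_struct} yields a covering of $\{W(u_\eps)>r\}$ by finitely many balls $B_{\lambda_0\eps}(x_i^\eps)$; their number is uniformly bounded, since each contributes at least $\mu_0$ to $\eps^{-2}\int W(u_\eps)$. A BBH-style Vitali merging produces a refined collection $\{B_{M\eps}(y_j^\eps)\}_{j=1}^K$ of mutually $10M\eps$-separated balls still covering the bad set, with $K$ uniformly bounded and $y_j^\eps\to a_j\in\overline{\Omega_L}$ along a subsequence. Outside these balls $u_\eps$ lies in $\{\mathrm{dist}(\cdot;\cal P)<r\}$, so \lemref{Jerrard_form} applies on every circle $\partial B_s(a_j)$ with $M\eps<s<\rho$ (fixing $\rho$ so that the $B_{2\rho}(a_j)$ are disjoint). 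Writing $v:=Q(u_\eps)$ and using Cauchy-Schwarz on the tangential gradient,
\[
\int_{\partial B_s(a_j)} e_\eps(u_\eps)\;\ge\;\frac{L_j(s)^2}{4\pi s}\;-\;\text{(lower order)},
\]
where $L_j(s)$ is the $\cal P$-length of $\theta\mapsto v(a_j+s\hat r_\theta)$. Its homotopy class $d_j\in\mathbb{Z}/2$ is independent of $s$, and a standard homotopy argument on an enclosing curve forces $\sum_j d_j\equiv 1\pmod 2$. Whenever $d_j=1$, $L_j(s)\ge L_0$; integrating in $s\in[M\eps,\rho]$ gives
\[
\int_{B_\rho(a_j)\setminus B_{M\eps}(y_j^\eps)} e_\eps(u_\eps)\;\ge\;\frac{L_0^2}{4\pi}|\ln\eps|-C.
\]

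Summing over the non-trivial $d_j$'s and comparing with the upper bound forces \emph{exactly one} index $j$ with $d_j=1$; this defines the point $a$. The remaining ``trivial'' clusters (with $d_j=0$) are ruled out by a surgery argument: on $B_\rho(a_j)$, replace $u_\eps$ with the $\cal P$-valued harmonic extension of $v|_{\partial B_\rho(a_j)}$, which exists because the boundary loop is null-homotopic in $\cal P$; the resulting competitor, viewed in $F_1$, has $W=0$ on $B_\rho(a_j)$ and smaller Dirichlet energy (with \lemref{proj_lemma} ensuring that the matching across $\partial B_\rho(a_j)$ does not raise $E_\eps$), contradicting the minimality of $u_\eps$. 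Hence all bad balls concentrate at $a$; for fixed $R>0$ and small enough $\eps$ the region $\Omega_L\setminus B_R(a)$ is bad-ball-free, and subtracting the sharp lower bound on $B_R(a)$ from the total upper bound yields the uniform estimate $\int_{\Omega_L\setminus B_R(a)} e_\eps(u_\eps)\le C$. The main obstacle is the careful $\mathbb{Z}/2$-degree bookkeeping combined with the surgery step: the latter must respect the $F_1$ constraint and the Dirichlet data on $\partial\Omega$, and must not inadvertently raise the bulk potential $W$---this is where \lemref{proj_lemma} is essential.
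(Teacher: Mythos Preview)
Your overall strategy---sharp upper bound, bad-ball covering via \lemref{bdd_potential_int} and \lemref{gd_ball_struct}, sharp lower bound on one ball, then subtraction---matches the paper's. There are, however, two places where your execution diverges from the paper's and where the argument as written is incomplete.

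\medskip
\textbf{The annular lower bound.} You merge at scale $\eps$, pass to limit points $a_j$, and then claim that \lemref{Jerrard_form} applies on \emph{every} circle $\partial B_s(a_j)$ with $M\eps<s<\rho$. This is only true if exactly one merged bad ball lies in $B_\rho(a_j)$. Nothing in your merging prevents, say, two bad balls at mutual distance $\eps^{1/2}$ from converging to the same $a_j$; in that case the circles $\partial B_s(a_j)$ cross a bad ball for $s$ near $\eps^{1/2}$, and the $\mathbb{Z}/2$-degree on the outer annulus may vanish, so the naive integration over $[M\eps,\rho]$ does not recover the full $\frac{L_0^2}{4\pi}\abs{\ln\eps}$. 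The paper handles precisely this multi-scale situation by running the Jerrard--Sandier ball-growth/merging iteration inside $B_R(a)$ (the construction with $x_j^m,\rho_j^m,t_m,\delta_j^m$ and the function $\lambda_\eps(s)$ from \cite{jerrard_lower}), which telescopes the logarithms across successive merges and yields the lower bound (\ref{lower_ball}) regardless of how the bad balls cluster. Your sketch does not contain this mechanism.

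\medskip
\textbf{The surgery step.} Your elimination of ``trivial'' clusters ($d_j=0$) by replacing $u_\eps$ on $B_\rho(a_j)$ with a $\cal P$-valued harmonic extension of $v=Q(u_\eps)|_{\partial B_\rho(a_j)}$ does not produce an admissible competitor: on $\partial B_\rho(a_j)$ the replacement equals $Q(u_\eps)$, not $u_\eps$, so the glued map is not in $W^{1,2}$. \lemref{proj_lemma} projects onto the convex hull $\Sigma$, not onto $\cal P$, and does not repair this boundary mismatch. Moreover, even if one patched the trace, the claim that the $\cal P$-valued harmonic extension has smaller Dirichlet energy than $u_\eps|_{B_\rho(a_j)}$ (an $F_1$-valued map) is not justified. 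More to the point, the step is unnecessary: once the lower bound (\ref{lower_ball}) holds at the single essential point $a$, the subtraction from the upper bound (\ref{easy_upper_bound}) already gives the conclusion of the lemma, with no need to know that $\Omega_L\setminus B_R(a)$ is ``bad-ball free.'' The paper does not perform any surgery; it simply exhibits one $a$ with the sharp lower bound and subtracts.
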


\begin{proof}
We start with the following simple observation: for any $b \in \Omega$ and $r_1 > 0$ such that $B_{2r_1}(b) \subset \Omega$, one can build a function $v_\eps \in W^{1,2}(\Omega;F_1)$ such that $v_\eps = g$ on $\partial \Omega$, $v_\eps$ equal $g_0$ on $\partial B_{r_1}(b)$, where $g_0$ is a (fixed) closed geodesic in $\cal P$ appropriately parametrized and
\begin{equation}\label{easy_upper_bound}
E_\eps(u_\eps) \leq E_\eps(v_\eps) \leq \frac{L_0^2}{4\pi}\ln\left ( \frac{1}{\eps}\right ) + C,
\end{equation}
where $C > 0$ is a constant that depends on $b \in \Omega$ and $r_1>0$, but is independent of $\eps$.

We will show next that there is a single $a \in \overline{\Omega}$ with the following property: there is a constant $C>0$ such that, for any $R>0$ with $B_R(a) \subset \Omega_L$, there is $\eps_0>0$ such that, for all $0<\eps\leq \eps_0$ we have
\begin{equation}\label{lower_ball}
\int_{B_R(a)}e_\eps(u_\eps) \geq \frac{L_0^2}{4\pi}\ln\left (\frac{R}{\eps} \right) -C.
\end{equation}
These two observations will lead to the conclusion of the lemma.

The proof of (\ref{lower_ball}) is a combination of arguments from \cite{BBH}, \cite{jerrard_lower} and \cite{sandier}, that we can use because of \lemref{Jerrard_form}.

We argue first in the following manner as in \cite{BBH}

Let $r > 0$ be such that the hypotheses of \lemref{Jerrard_form} are satisfied and such that $Q(u)$ is well defined for every $u \in \Sigma$ with $W(u) < r$.  Using this $r$, choose $\lambda_0, \mu_0 > 0$ as in \lemref{gd_ball_struct}.  We can select a collection of points ${\cal C} = \{x_i\}_{i \in I} \subset \Omega$ such that
\begin{enumerate}
\item $\overline{\Omega} \subset \bigcup_{i \in I}B_{\lambda_0 \eps}(x_i)$
\item $B_{\frac{\lambda_0 \eps}{2}}(x_i) \cap B_{\frac{\lambda_0 \eps}{2}}(x_j) = \emptyset$ whenever $i,j \in I$, $i \neq j$.
\end{enumerate}
Observe that the second condition, plus simple geometry demonstrate the existence of a non-negative integer $\iota$ with the following property: every $x \in \overline{\Omega}$ has
$$
{\rm card}(\{j \in I : x\in B_{2\lambda_0 \eps}(x_j)\}) \leq \iota.
$$
From here we define
$$
J_\eps = \{i \in I : \frac{1}{\eps^2} \int_{B_{2\rho_\eps}(x_i)} W(u) > \mu_0\},
$$
where $\rho_\eps = \lambda_0 \eps$.  We find that there is a natural number $N$ independent of $\eps>0$ such that
$$
{\rm card}(J_\eps) \leq N.
$$
Next we iteratively build finite families of balls, ${\cal C}_i = \{B_{r^i_j}(x^i_j)\}_{j=1}^{N(i)}$, that will contain the main part of the energy $E_\eps(u_\eps; B_R(a))$.  This construction follows very closely the Jerrard/Sandier arguments from \cite{jerrard_lower} and \cite{sandier} and starts with ${\cal C}_0 = \{B_{r_i}(x_i): i \in J_\eps\}$.  Here $r_i = \rho_\eps = \lambda_0\eps$ for all $i\in J_\eps$.  We then use a merger argument as follows:  if $i,j \in J_\eps$, $i \neq j$ are such that
$$
\abs{x_i - x_j} \leq r_i + r_j,
$$
we replace the balls $B_{r_i}(x_i)$, $B_{r_j}(x_j)$ by a single ball centered at
$$
x = \frac{r_i}{r_i+r_j}x_i + +\frac{r_j}{r_i+r_j}x_j
$$
with radius $r = r_i + r_j$.  It is straightforward to check both that the original balls are contained in the new one and that we forced the radius of the new ball to be the sum of the radii of the original balls.  We continue this procedure until we have a family of balls $\{B_{r_i}(x_i)\}_{i \in J}$  such that
\begin{enumerate}
\item $\abs{J} \leq N$
\item $r_i \geq \rho_\eps$ for all $i \in J_\eps$, $\sum_{i \in J} r_i \leq N\lambda_0 \eps$
\item $\abs{x_i - x_j} > r_i + r_j$ for all $i,j \in J$, $i \neq j$.
\end{enumerate}
Observe that we have
$$
W(u(x)) \leq r,
$$
for all $x \in \Omega \setminus \bigcup_{i \in J} B_{r_i}(x_i)$.

Let us now denote by $a_1,...,a_m$ the distinct limits of the $\{x_i\}_{i \in J}$ as $\eps \to 0$ and choose $R > 0$ such that
$$
B_{2R}(a_j) \cap B_{2R}(a_k) = \emptyset, \,\,\,\, B_{2R}(a_j) \subset \Omega_L
$$
for all $1 \leq j,k \leq m$, $j\neq k$.  Let also $\eps_0 >0$ be small enough so that, for all $0 < \eps \leq \eps_0$, every $x_j \in B_{R/4}(a_k)$ for some $k = 1,...,m$.  Note that $Q(u_\eps)$ is well-defined on each $\partial B_R(a_j)$.

Suppose that $a_j$ is such that $Q(u_\eps)$ is non-contractible on $\partial B_R(a_j)$.  Denote $a_j$ by $a$ and set
$$
J_1=\{j : B_{r_j}(x_j)\subset B_R(a)\}.
$$
Possibly by relabeling, we assume that $J_1=\{1, ..., k(1)\}$ and write $x_j^1$ and $\rho_j^1$ instead of $x_j$ and $r_j$, respectively.  Observe that so far we know that
$$
\sum_{j \in J_1} \rho_j^1 \leq \lambda_0 N \eps \,\,\,\,\mbox{and}\,\,\,\, i,j \in J_1, i \neq j \implies \abs{x^1_j-x^1_i} > \rho^1_j + \rho^1_i.
$$
Let
\begin{align*}
\delta^1_j = \left \{ 
\begin{array}{cc} 
1, & \mbox{if }Q(u_\eps)\mbox{ is non-contractible on }\partial B_{\rho^1_j}(x^1_j),\\
0, &\mbox{otherwise.} 
\end{array}
\right.
\end{align*}
Since $Q(u_\eps)$ is non-contractible on $\partial B_R(a)$, at least one $\delta_j^1 = 1$.  Define also
\begin{align*}
t_1 = \sup \left\{t>0 : B_{\rho^1_j+\delta^1_jt}(x^1_j) \subset \Omega_L, \, j=1,...,k, \,\,\,\, \mbox{and} \right.\\
\left.B_{\rho^1_j+\delta^1_jt}(x^1_j) \cap B_{\rho^1_i+\delta^1_it}(x^1_i) = \emptyset \,\,\,\mbox{if}\,\,\, \,i \neq j\right\}.
\end{align*}
Since at least one $\delta^1_j = 1$, then $0 < t_1 < +\infty$. There are two mutually exclusive options for $t_1$: 
\begin{enumerate}
\item There is a $j \in \{1,..,k(1)\}$ such that $B_{\rho^1_j+\delta^1_jt_1}(x^1_j)$ touches the boundary of $\Omega_L$,
\item Two or more balls $B_{\rho^1_j+\delta^1_jt_1}(x^1_j)$ touch each other without either touching $\partial \Omega_L$.
\end{enumerate}
In the first case the procedure terminates.  If, for example, $x_j^1$ is the point for which $B_{r_j^1+t_1\delta_j^1}(x_j^1)$ touches $\partial \Omega_L$, we must have $r_j^1+\delta_j^1t_1 = {\rm dist}(x^1_j, \Omega_L)$.  By the choice of $R>0$, we have that
$$
2R\leq {\rm dist}(a, \Omega_L) \leq \abs{x^1_j-a}+{\rm dist}(x^1_j, \Omega_L) \leq \frac{R}{4}+r_j^1+\delta_j^1t_1.
$$
Hence $r_j^1+\delta_j^1t_1\geq \frac{7R}{4}$ and $\delta_j^1 = 1$.  Now we also have that $B_{\frac{R}{4}}(a)\subset B_{\rho_j^1+t_1}(x_j^1)$ and therefore $k(1)=1$. That is, there is only one $x^1_j$.

In the second case two or more balls touch each other.  Set
$$
\rho_j^2 = \rho_j^1+\delta_j^1t_1, \,\,\,\, x_j^2 = x_j^1.
$$
Again, replace each pair of balls that touch, say $B_{\rho_i^2}(x_i^2)$ and $B_{\rho_j^2}(x_j^2)$, with a single ball with radius equal to the sum of the radii of the original balls  and the center at
$$
x = \frac{\rho_j^2}{\rho_j^2+\rho_j^2}x_j^2 + \frac{\rho_i^2}{\rho_j^2+\rho_j^2}x_i^2.
$$
Observe that the new ball contains both balls $B_{\rho_i^2}(x_i^2)$ and $B_{\rho_j^2}(x_j^2)$.  Repeat this procedure until we arrive at a set of balls with disjoint closures.  With a slight abuse notation, denote the centers and radii of these balls by $x_j^2$ and $\rho_j^2$, $j=1,...,k(2)$, respectively.  Set
$$
J_j^2 = \{i : B_{\rho_i^1+t_1\delta_i^1}(x_i^1) \subset B_{\rho_j^2}(x_j^2)\},
$$
and observe that
$$
\bigcup_{i \in J_j^2} B_{\rho_i^1+t_1\delta_i^1}(x_i^1) \subset B_{\rho_j^2}(x_j^2) \,\,\,\,\mbox{and}\,\,\,\, \rho_j^2 = \sum_{i \in J_j^2} (\rho_i^1 + t_1\delta_j^1).
$$
We point out that the $x_j^2$ are in the convex envelope of the $x_j^1$ and hence they are always in $B_{R/4}(a)$.  In particular, if $k(2) \geq 2$, then $\rho_j^2 \leq R/2$ for all $j=1,...,k(2)$.  We iterate this procedure until one of the balls touches $\partial \Omega_L$.  Suppose that this occurs at the step $M$.  At this point, for each $1 \leq m \leq M$ we have the following.
\begin{enumerate}
\item There is a collection of points $x_j^m \in \overline{\Omega}$, integers $\delta_j^m \in \{0,1\}$, and real numbers $t_m, \rho_j^m > 0$, $j=1,...,k(m)$ with $x_j^m \in B_{R/4}(a)$ such that
$$
{\rm meas}(B_{\rho_j^m+t_m\delta_j^m}(x_j^m) \cap B_{\rho_i^m+t_m\delta_i^m}(x_i^m)) = 0,
$$
if $1 \leq i < j\leq k(m).$
\item If $m \leq M-1$, for every $j \in \{1,...,k(m)\}$ there is a $k \in \{1,...,k(m+1)\}$ such that $B_{\rho_j^m+t_m\delta_j^m}(x_j^m) \subset B_{\rho_k^{m+1}}(x_k^{m+1}).$
\item $\displaystyle \rho_k^m = \sum_{i \in J_k^m} (\rho_i^{m-1} + t_{m-1}\delta_i^{m-1}),$
where $J_k^m = \{i: B_{\rho_i^{m-1}}(x_i^{m-1}) \subset B_{\rho_k^{m}}(x_k^{m})\},$ for $m \geq 2.$
\item There is at least one $i \in J_j^m$ such that $\delta_i^{m-1} = 1$ if $m \geq 2$ and $\delta_m^j = 1.$
\item $\displaystyle \sum_{j=1}^{k(1)} \rho_1^1 \leq \lambda_0 N \eps.$
\item $k(M) = 1$, $\rho_1^M + t_M \geq R$, $\delta_1^M = 1$ and $B_{\rho_1^M+t_M}(x_1^M) \subset \Omega_L.$
\end{enumerate}

Once the sets ${\cal C}_i$ are built, we need to estimate the integral of $e_\eps(u)$ over ${\cal C}_i$.  To this end, recall the following definition from \cite{jerrard_lower}:
\begin{equation}\label{little_lambda_jerrard}
\lambda_\eps(s) = \min_{m \in [0,1]} \left ( \frac{L_0^2m^n}{4\pi s} + \frac{1}{C\eps}(1-m)^N \right ).
\end{equation}
We observe that \lemref{Jerrard_form} and Theorem 2.1 in \cite{jerrard_lower} imply that there are constants $C,N>0$ such that, whenever $Q(u_\eps)$ is non-contractible on the circle $\partial B_s(x)$ (this is the case 
when $W(u_\eps) < r$ on $\partial B_s(x)$ and $r>0$ is as defined in \lemref{Jerrard_form}), then for $s > \eps$ it follows that 
\begin{align*}
\int_{\partial B_s(a)} e_\eps(u) &\geq \int_{\partial B_s(a)} \left ( \frac{\rho^n \abs{\nabla_\tau Q(u)}^2}{2} + \frac{\abs{\nabla_\tau \rho}^2}{C} + \frac{1}{C\eps^2}\abs{1-\rho}^2\right ).
\end{align*}
Next we define $m_s = \min\{\abs{u(x)}: x\in \partial B_s(a)\}$ and use Lemma 2.3 from \cite{jerrard_lower} to obtain
\begin{align*}
\int_{\partial B_s(a)} e_\eps(u) &\geq \frac{m_s^n}{2}\int_{\partial B_s(a)} \abs{\nabla_\tau Q(u)}^2\,dl + \frac{1}{C\eps}\abs{1-m_s}^M \\
&\geq \frac{m_s^nL_0^2}{4\pi s} + \frac{1}{C\eps}\abs{1-m_s}^M,
\end{align*}
for some $M > 1$.  By definition, 
$$
\int_{\partial B_s(x)}e_\eps(u_\eps) \geq \lambda_\eps(s).
$$
Furthermore, also from \cite{jerrard_lower}, we have
$$
\lambda_\eps(s) \geq \frac{L_0^2}{4\pi s}\left (1 - C\frac{\eps^\alpha}{s^\alpha}\right ),
$$
for some constants $C,\alpha > 0$ that do not depend on $s, \eps$.  This shows that when both $Q(u_\eps)$ is well-defined and non-contractible and $W(u_\eps)<r$ in an annulus $B_{s_1}\setminus B_{s_0}(x)$, where $s_0 > \eps$ then
\begin{equation}\label{lower_annulus}
\int_{B_{s_1}\setminus B_{s_0}(x)} e_\eps \geq  \frac{L_0^2}{4\pi}\ln\left ( \frac{s_1}{s_0}\right ) - C,
\end{equation}
for some constant $C > 0$ independent of $\eps$ and independent of $s_0, s_1 \in ]\eps,{\rm diam}(\Omega_L)]$.

Finally we compute:
\begin{align*}
\int_{B_{\rho_1^M+t_M}(x_1^M)} e_\eps(u_\eps) &= \int_{(B_{\rho_1^M+t_M}\setminus B_{\rho_1^M})(x_1^M)} e_\eps(u_\eps)+\int_{B_{\rho_1^M}(x_1^M)} e_\eps(u_\eps)  \\
&\geq  \frac{L_0^2}{4\pi}\ln\left ( \frac{\rho_1^M+t_M}{\rho_1^M}\right ) + \sum_{j \in J_1^M} \int_{(B_{\rho_j^{M-1}+t_{M-1}\delta_j^{M-1}}\setminus B_{\rho_j^{M-1}})(x_1^M)} e_\eps(u_\eps)\\
&+ \sum_{j \in J_1^M}\int_{B_{\rho_j^{M-1}}(x_1^{M-1})} e_\eps(u_\eps) -C\\
&\geq \frac{L_0^2}{4\pi} \left (\ln\left ( \frac{\rho_1^M+t_M}{\rho_1^M}\right ) + \sum_{j \in J_1^M} \ln\left ( \frac{\rho_j^{M-1}+t_{M-1}\delta_j^{M-1}}{\rho_j^{M-1}}\right ) \right )\\
&+ \sum_{j \in J_1^M}\int_{B_{\rho_j^{M-1}}(x_j^{M-1})} e_\eps(u_\eps) -C\\
&\geq \frac{L_0^2}{4\pi} \left (\ln\left ( \frac{\rho_1^M+t_M}{\rho_1^M}\right ) +  \ln\left ( 1 + \frac{ \sum_{j \in J_1^M} t_{M-1}\delta_j^{M-1}}{\sum_{j \in J_M} \rho_j^{M-1}}\right ) \right )\\
&+ \sum_{j \in J_1^M}\int_{B_{\rho_j^{M-1}}(x_j^{M-1})} e_\eps(u_\eps) -C \\
&\geq \frac{L_0^2}{4\pi} \ln\left ( \frac{\rho_1^M+t_M}{\sum_{j \in J_1^M} \rho_j^{M-1}}\right ) + \sum_{j \in J_1^M}\int_{B_{\rho_j^{M-1}}(x_j^{M-1})} e_\eps(u_\eps) -C
\end{align*}
At this point we iterate to finally arrive at
\begin{align*}
\int_{B_{\rho_1^M+t_M}(x_1^M)} e_\eps(u_\eps) \geq \frac{L_0^2}{4\pi} \ln\left ( \frac{\rho_1^M+t_M}{\sum_{j =1}^{k(1)} \rho_j^{1}}\right ) -C.
\end{align*}
Since $\sum_{j =1}^{k(1)} \rho_j^{1} \leq \lambda_0N \eps$ and $\rho_1^M + t_M \geq R$, the conclusion of the lemma follows. 
\qed
\end{proof} 

\begin{remark}\label{remark_upper_outer_ball}
Because of (\ref{easy_upper_bound}) and (\ref{lower_ball}), for the point $a$ in the previous theorem and $R>0$ such that $B_R(a)\subset \subset \Omega_L$ and $Q(u)$ is non-contractible on $\partial B_R(a)$, we have
\begin{equation}\label{upper_outer_ball}
\int_{\Omega_L \setminus B_R(a)}e_\eps(u_\eps) \leq \frac{L_0^2}{4\pi}\ln\left (\frac{1}{R} \right) +C,
\end{equation}
for a constant $C > 0$ independent of $\eps>0$ and $R>0$ with $B_R(a) \subset \subset \Omega_L$.
\end{remark}

From this last inequality we conclude that $u_\eps$ are bounded in $W^{1,2}(\Omega_L\setminus B_R(a); F_1)$ for any fixed $R > 0$, where $a\in \Omega_L$ is the point from \lemref{one_sing}.  By a standard diagonalization argument we then obtain the existence of $u_0 \in W^{1,2}_{loc}(\Omega_L \setminus \{a\}; F_1)$ such that, along a subsequence,
$$
u_\eps \rightharpoonup u_0
$$
in $W^{1,2}(\Omega_L\setminus B_R(a); F_1)$ for any fixed $R > 0$.  We will prove the
\begin{lemma}\label{convergence_strong_away_a}
Along a subsequence, we have that
$$
u_\eps \to u_0
$$
in $W^{1,2}(\Omega_L\setminus B_R(a); F_1)$ for any fixed $R > 0$ .
\end{lemma}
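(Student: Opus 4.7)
The plan is to upgrade the weak convergence $u_\eps \rightharpoonup u_0$ in $W^{1,2}(\Omega_L\setminus B_R(a); F_1)$ to strong convergence. Since $u_\eps \to u_0$ in $L^2$ by Rellich-Kondrachov, this reduces to the energy identity
$$
\lim_{\eps\to 0} \int_{\Omega_L\setminus B_R(a)} \abs{\nabla u_\eps}^2 = \int_{\Omega_L\setminus B_R(a)} \abs{\nabla u_0}^2.
$$
Weak lower semicontinuity gives the $\liminf\geq$ bound; the opposite inequality is the content of the proof, to be extracted from the minimality of $u_\eps$ via a surgery/competitor argument.

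First I would identify the target of $u_0$. The bound $E_\eps(u_\eps) \leq \frac{L_0^2}{4\pi}\abs{\ln\eps} + C$ coming from \eqref{easy_upper_bound} and \remref{remark_upper_outer_ball} yields $\int_\Omega W(u_\eps) \leq C\eps^2\abs{\ln\eps}\to 0$. Along a subsequence $u_\eps\to u_0$ a.e., so $W(u_0)=0$ and $u_0\in\cal P$ a.e.\ in $\Omega_L\setminus\{a\}$. Next, I would choose a thin annulus suitable for surgery. Fix a small $\delta>0$ with $B_{R+2\delta}(a)\subset\Omega$. From \remref{remark_upper_outer_ball} the energy density $e_\eps(u_\eps)$ has uniformly bounded integral on $B_{R+\delta}(a)\setminus B_R(a)$, so Fubini on the shell, together with the $L^2$ convergence $u_\eps\to u_0$ and a further extraction, furnishes radii $R<\rho_0<\rho_1<R+\delta$ at which $\int_{\partial B_{\rho_j}(a)}\abs{u_\eps-u_0}^2 d\sigma\to 0$ and $\int_{\partial B_{\rho_j}(a)}(\abs{\nabla u_\eps}^2 + \abs{\nabla u_0}^2) d\sigma$ is uniformly bounded for $j=0,1$. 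By \lemref{gd_ball_struct} applied on these circles, $W(u_\eps)<r$ pointwise on $\partial B_{\rho_j}(a)$ for the threshold $r>0$ of \lemref{Jerrard_form}, so $Q(u_\eps)$ is well defined and close to $u_\eps$ in $L^\infty$ on each of these circles.

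I would then construct a competitor $v_\eps\in W^{1,2}(\Omega;F_1)$ by setting $v_\eps=u_\eps$ on $\Omega\cap\overline{B_{\rho_0}(a)}$, $v_\eps=u_0$ on $\Omega\setminus B_{\rho_1}(a)$, and on the annulus $B_{\rho_1}(a)\setminus B_{\rho_0}(a)$ defining $v_\eps$ as a geodesic interpolation in $\cal P$ connecting $Q(u_\eps)(a+\rho_0\hat{r})$ to $u_0(a+\rho_1\hat{r})$ (valid because on the two circles $Q(u_\eps)$ is pointwise close to $u_0$, and the minimal rotation of \propref{def_min_rotation}/\lemref{expr_min_rot} is therefore well defined and smooth in the endpoints), with a negligibly thin inner bridge from $u_\eps$ to $Q(u_\eps)$. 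By construction $v_\eps\in F_1$ takes values in $\cal P$ throughout the annulus, matches $u_0=g$ on $\partial\Omega$, and satisfies $W(v_\eps)\equiv 0$ on the annulus. Applying \lemref{proj_lemma} once more if necessary to guarantee $v_\eps\in\Sigma$, minimality gives $E_\eps(u_\eps)\leq E_\eps(v_\eps)$, and cancelling the common contribution on $B_{\rho_0}(a)$ produces
$$
\int_{\Omega\setminus B_{\rho_0}(a)} e_\eps(u_\eps) \leq \int_{B_{\rho_1}(a)\setminus B_{\rho_0}(a)} e_\eps(v_\eps) + \frac{1}{2}\int_{\Omega\setminus B_{\rho_1}(a)} \abs{\nabla u_0}^2.
$$
The annular term is controlled via \lemref{expr_min_rot} by the tangential energies on $\partial B_{\rho_j}(a)$ (uniformly bounded) times the annulus width $O(\delta)$, plus a radial contribution proportional to the $L^2$ size of the pointwise difference of the traces, which vanishes as $\eps\to 0$. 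Letting $\eps\to 0$ and then $\delta\to 0$ yields the desired $\limsup$ bound.

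The main obstacle is the interpolation on the surgery annulus: a naive affine interpolation in $F_1$ would leave $v_\eps$ away from $\cal P$ and force a catastrophic $\eps^{-2}\int W(v_\eps)$ contribution. Working geodesically inside $\cal P$ via the minimal rotation avoids this, but requires that the two traces be pointwise (not merely $L^2$) close. Establishing this pointwise closeness---by combining \lemref{gd_ball_struct} to get $u_\eps$ close to $\cal P$ on the good circles, the $L^2$ trace convergence supplied by the Fubini step, and the one-dimensional Sobolev embedding along each circle using the uniform tangential-energy bound---is the principal technical step. Once this is in place the rest is bookkeeping of energies.
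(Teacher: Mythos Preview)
Your global surgery on an annulus around $a$ is a different route from the paper's, which works locally: for each $x\in\Omega\setminus\{a\}$ the paper selects, via Fubini and the one-dimensional Sobolev embedding, a radius $\rho$ with $u_\eps\to u_0$ uniformly on $\partial B_\rho(x)$, and then builds a competitor on $B_\rho(x)$ that \emph{exactly} matches $u_\eps$ on $\partial B_\rho(x)$. Concretely, with $R_\eps$ the harmonic extension of the minimal rotation $R(u_0,Q(u_\eps))$ and $Z_\eps$ the solution of $-\Delta Z_\eps+\eps^{-2}Z_\eps=0$ in $B_\rho(x)$ with boundary data $u_\eps-Q(u_\eps)$, the paper sets $v_\eps=Q(R_\eps u_0 R_\eps^T)+Z_\eps$, checks that $v_\eps=u_\eps$ on $\partial B_\rho(x)$, shows $v_\eps\to u_0$ strongly in $W^{1,2}(B_\rho(x))$, and reads off the energy identity from minimality. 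A covering of $\Omega\setminus B_R(a)$ by such balls (with a minor modification near $\partial\Omega$) finishes the proof. No interpolation annulus is needed at all.

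Your approach can also be made to work, but as written it has a gap at exactly the point you flag as the principal technical step. You interpolate geodesically in ${\cal P}$ between $Q(u_\eps)(a+\rho_0\hat r)$ and $u_0(a+\rho_1\hat r)$, and for the minimal rotation of \lemref{expr_min_rot} to apply you need these two endpoints to be within the injectivity radius of ${\cal P}$ for every ray $\hat r$. Your three ingredients (\lemref{gd_ball_struct}, the $L^2$ trace convergence, and the one-dimensional Sobolev embedding on each circle) yield uniform convergence $Q(u_\eps)\to u_0$ on $\partial B_{\rho_0}(a)$ and on $\partial B_{\rho_1}(a)$ \emph{separately}; they say nothing about the uniform proximity of $u_0|_{\partial B_{\rho_0}}$ to $u_0|_{\partial B_{\rho_1}}$. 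At this stage $u_0$ is only known to lie in $W^{1,2}$ on the annulus, so no radial continuity is available, and on a set of rays of positive measure the two traces may lie beyond each other's cut locus, rendering the minimal rotation undefined and the interpolation ill-posed. Correspondingly, your claim that the radial contribution ``vanishes as $\eps\to 0$'' is not correct for fixed $\delta$: after $\eps\to 0$ it leaves behind $\delta^{-1}\int_{S^1}|u_0(\rho_0,\theta)-u_0(\rho_1,\theta)|^2\,d\theta$, which is bounded by $C\int_{B_{\rho_1}\setminus B_{\rho_0}}|\nabla u_0|^2$ but need not be zero. A clean fix is to split the surgery annulus in two: on the inner half interpolate geodesically from $Q(u_\eps)(a+\rho_0\hat r)$ to $u_0(a+\rho_0\hat r)$ (these \emph{are} uniformly close by your argument), and on the outer half take $v_\eps(r,\theta)=u_0(\phi(r),\theta)$ with $\phi$ an affine diffeomorphism of the outer half-shell onto $[\rho_0,\rho_1]$; this second piece is ${\cal P}$-valued by construction, matches at both ends, and has Dirichlet energy bounded by a fixed multiple of $\int_{B_{\rho_1}\setminus B_{\rho_0}}|\nabla u_0|^2=o_\delta(1)$. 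With this modification (together with your thin bridge from $u_\eps$ to $Q(u_\eps)$) the rest of your bookkeeping goes through.
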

\begin{proof}
Let first $x \in \Omega\setminus \{a\}$, and $r > 0$ such that $a \notin \overline{B_{2r}(x)}\subset \Omega$.  We know that
\begin{equation}\label{u_eps_bdd_on_bdry_ball}
\int_{B_{2r}(x)\setminus B_r(x)}e_\eps(u_\eps) \leq C,
\end{equation}
for some constant independent of $\eps, r>0$.  We also know that, along a subsequence,
$$
\int_{B_{2r}(x)}\abs{u_\eps - u_0}^2 \to 0.
$$
By Fatou's Lemma and Fubini's Theorem there is a $\rho \in [r,2r]$ such that
$$
\int_{\partial B_\rho(x)}e_\eps(u_\eps)\leq C
$$
and
$$
\int_{\partial B_{\rho}(x)}\abs{u_\eps - u_0}^2 \to 0,
$$
along some subsequence $\eps_n \to 0$.  Dropping the index $n$ for simplicity, we observe that the $u_\eps$ are uniformly H\"older continuous on $\partial B_\rho(x)$, because the integrals $\int_{\partial B_\rho(x)} \abs{\nabla u_\eps}^2$ are uniformly bounded.  In particular, along a subsequence, $u_\eps \to u_0$ uniformly on $\partial B_\rho(x)$.  By Remark (\ref{remark_upper_outer_ball}), the map $Q(u_\eps)$ must be contractible on $\partial B_\rho(x)$.  Since
$$
\abs{Q(u_\eps)-u_0}Ê\leq \abs{Q(u_\eps)-u_\eps}+\abs{u_\eps-u_0} \leq 2\abs{u_\eps - u_0},
$$
the function $u_0$ must also be continuous and contractible on $\partial B_\rho(x)$.

Next, define $Z_\eps : B_\rho(x) \to F_0$ by
\begin{align}
-\Delta Z_\eps + \frac{1}{\eps^2}Z_\eps = 0 \,\,\,\, &\mbox{in} \,\,\,\, B_\rho(x) \nonumber\\
Z_\eps = u_\eps - Q(u_\eps) \,\,\,\, &\mbox{on} \,\,\,\, \partial B_\rho(x).
\end{align}
We have the estimate
$$
\int_{B_\rho(x)} \left \{\abs{\nabla Z_\eps}^2 + \frac{1}{\eps^2} \abs{Z_\eps}^2 \right \} \leq C\eps.
$$
As in \cite{BBH}, this follows from Pohozaev's identity applied to $Z_\eps$.  Now let $R_\eps$ satisfy
\begin{align}
-\Delta R_\eps = 0 \,\,\,\, &\mbox{in} \,\,\,\, B_\rho(x) \nonumber\\
R_\eps = R(u_0, Q(u_\eps)) \,\,\,\, &\mbox{on} \,\,\,\, \partial B_\rho(x),
\end{align}
where $R(P,Q)$ is as defined in (\ref{def_min_rotation}).  We verify that $R_\eps \to I_3$ strongly in $W^{1,2}(B_\rho(x); M^3(\R{}))$.  Indeed, observe first that $Q(u_\eps) \to u_0$ uniformly on $\partial B_\rho(x)$, hence $R_\eps \to I_3$ uniformly on $\partial B_\rho(x)$.  Because $R_\eps - I_3$ is harmonic in $B_\rho(x)$, it follows that $R_\eps \to I_3$ uniformly in $B_\rho(x)$.  Next we find that
$$
\int_{B_\rho(x)}\abs{\nabla R_\eps}^2 = \int_{\partial B_\rho(x)} \langle R_\eps ; \nabla R_\eps \cdot \nu \rangle = \int_{\partial B_\rho(x)} \langle (R_\eps -I_3); \nabla R_\eps \cdot \nu \rangle,
$$
where we used the fact that $R_\eps$ is harmonic.  Then
$$
\int_{B_\rho(x)}\abs{\nabla R_\eps}^2 \leq \left ( \int_{\partial B_\rho(x)} \abs{R_\eps - I_3}^2 \int_{\partial B_\rho(x)} \abs{\nabla R_\eps \cdot \nu}^2 \right )^{\frac{1}{2}}.
$$
However, a harmonic function on the disk $B_\rho(x)$ satisfies the classical equipartition of the energy property, that is,
$$
\int_{\partial B_\rho(x)} \abs{\nabla R_\eps \cdot \nu}^2 = \int_{\partial B_\rho(x)} \abs{\nabla R_\eps \cdot \tau}^2.
$$
We then obtain 
$$
\int_{B_\rho(x)}\abs{\nabla R_\eps}^2 \leq \left ( \int_{\partial B_\rho(x)} \abs{R_\eps - I_3}^2 \int_{\partial B_\rho(x)} \abs{\nabla R_\eps \cdot \tau}^2 \right )^{\frac{1}{2}}.
$$
Thus, $R_\eps = R(u_0,Q(u_\eps))$ on $\partial B_\rho(x)$.  \lemref{expr_min_rot} and (\ref{u_eps_bdd_on_bdry_ball}) show that $\int_{\partial B_\rho(x)} \abs{\nabla R_\eps \cdot \tau}^2$ is uniformly bounded in $\eps > 0$.  Since $Q(u_\eps) \to u_0$ uniformly, we obtain that $\int_{B_\rho(x)}\abs{\nabla R_\eps}^2 \to 0$ and $R_\eps \to I_3$ strongly in $W^{1,2}(B_\rho(x); M^3(\R{}))$.

We now define
$$
v_\eps = Q \left (R_\eps u_0 R_\eps^T \right ) + Z_\eps.
$$
Note that $W(v_\eps) \leq C\abs{Z_\eps}^2$.  It also follows easily from the previous discussion that $v_\eps \to u_0$ strongly in $W^{1,2}(B_\rho(x);M^3(\R{}))$.  Therefore
$$
\lim_{\eps \to 0} E_\eps(v_\eps;B_\rho(x)) = \int_{B_\rho(x)}\frac{\abs{\nabla u_0}^2}{2},
$$
and we have
$$
v_\eps = \pi \left (R_\eps u_0 R_\eps^T \right ) + Z_\eps = Q(u_\eps) + u_\eps - Q(u_\eps) = u_\eps.
$$
on $\partial B_\rho(x)$. This shows that
$$
E_\eps(u_\eps; B_\rho(x)) \leq E_\eps(v_\eps; B_\rho(x)).
$$
From here we deduce
\begin{align*}
\frac{1}{2} \int_{B_\rho(x)} \abs{\nabla u_0}^2 &\leq \liminf_{\eps \to 0} \frac{1}{2} \int_{B_\rho(x)} \abs{\nabla u_\eps}^2 \\
&\leq  \limsup_{\eps \to 0} \frac{1}{2} \int_{B_\rho(x)} \abs{\nabla u_\eps}^2 \\
&\leq \limsup_{\eps \to 0} E_\eps(u_\eps; B_\rho(x)) \\
&\leq \limsup_{\eps \to 0} E_\eps(v_\eps; B_\rho(x)) = \frac{1}{2} \int_{B_\rho(x)} \abs{\nabla u_0}^2.
\end{align*}
In other words, 
$$
\frac{1}{2} \int_{B_\rho(x)} \abs{\nabla u_0}^2 = \lim_{\eps \to 0} \frac{1}{2} \int_{B_\rho(x)} \abs{\nabla u_\eps}^2.
$$
Thus $u_\eps \to u_0$ strongly in $W^{1,2}(B_r(x);M^3(\R{}))$ when $B_{2r}(x) \subset \Omega$.  However, this argument also works with small modifications when $x \in \partial \Omega$ under he assumption that $B_r(x) \cap \Omega$ is strictly starshaped (with respect to a point in the interior of $B_r(x) \cap \Omega$).  This shows that, for fixed $R > 0$, the sequence $u_\eps \to u_0$ strongly in $W^{1,2}(\Omega \setminus B_R(a);M^3(\R{}))$.
\qed
\end{proof}
\begin{remark}
Once we know the conclusion of the last Lemma, a simple modification of the argument in its proof shows the following:  if $U \subset \subset \overline{\Omega}\setminus \{a\}$ is an open set, and $v \in W^{1,2}_{loc}(\Omega\setminus \{a\}; {\cal P})$ has $v = u_0$ on $\partial U$, then
$$
\int_U\abs{\nabla u_0}^2 \leq \int_U\abs{\nabla v}^2.
$$
\end{remark}

The only claim of \thmref{one_sing} that still needs to be proved is the
\begin{lemma}\label{lemma_sing_inside}
Let $a \in \overline{\Omega}$ be as in \lemref{lemma_one_sing}. Then $a$ is in the interior of $\Omega$.
\end{lemma}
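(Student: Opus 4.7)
Suppose, for contradiction, that $a\in\partial\Omega$. The strategy is to derive a lower bound on $E_\eps(u_\eps)$ near $a$ whose leading coefficient is $\frac{L_0^2}{2\pi}$---twice that of the universal upper bound $E_\eps(u_\eps)\leq \frac{L_0^2}{4\pi}\ln(1/\eps)+C$ from (\ref{easy_upper_bound})---thereby reaching a contradiction for $\eps$ small. The factor of two is topological: if $a\in\partial\Omega$, then the non-trivial class $[g]\in\pi_1({\cal P})={\mathbb Z}/2$ must be realized on \emph{half}-circles $\partial B_s(a)\cap\Omega$ of angular extent $\pi$ rather than on full circles, halving the available angular range and (by Cauchy--Schwarz) doubling the $L^2$ lower bound on the angular derivative of $Q(u_\eps)$ for a fixed loop length.

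To implement this, fix $\rho>0$ small enough that $\partial\Omega\cap B_{2\rho}(a)$ is smooth. For $\eps$ small, the bad-ball analysis in the proof of \lemref{lemma_one_sing} combined with \lemref{gd_ball_struct} shows that the set $\{x\in\Omega:W(u_\eps(x))\geq r\}$ is contained in a ball $B_{s_\eps}(a)$ with $s_\eps\to 0$; a refinement (see the obstacle paragraph) gives $s_\eps=O(\eps)$. For every $s\in[s_\eps,\rho]$ the projection $Q(u_\eps)$ is therefore well-defined and continuous on $\overline\Omega\setminus B_s(a)$. Since $a\in\partial\Omega$, this open set deformation-retracts onto the arc $\partial\Omega\setminus B_s(a)$ and is in particular simply connected, so the boundary loop consisting of $g$ along $\partial\Omega\setminus B_s(a)$ followed by $Q(u_\eps)$ along the half-circle $\partial B_s(a)\cap\Omega$ is null-homotopic in ${\cal P}$ (it bounds the continuous map $Q(u_\eps)$). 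Because $[g]$ is the non-trivial element of $\pi_1({\cal P})$, the complementary \emph{inner} loop on $\partial(B_s(a)\cap\Omega)$---$g$ on $\partial\Omega\cap B_s(a)$ followed by $Q(u_\eps)$ on the half-circle (with the appropriate orientation)---must represent $[g]$, hence has length at least $L_0$ in ${\cal P}$. Since $g$ is smooth the short-arc contribution is $O(s)$, so the half-circle contribution is bounded below by $L_0-O(s)$.

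A half-circle adaptation of \lemref{Jerrard_form}, proved exactly as the full-circle version but with Cauchy--Schwarz applied on an arclength-$\pi s$ arc, then gives
$$
\int_{\partial B_s(a)\cap\Omega} e_\eps(u_\eps)\;\geq\;\frac{(L_0-O(s))^2}{2\pi s}\;\geq\;\frac{L_0^2}{2\pi s}-C
$$
for every $s\in[s_\eps,\rho]$. Integrating in $s$ yields $E_\eps(u_\eps;\,B_\rho(a)\cap\Omega)\geq \frac{L_0^2}{2\pi}\ln(1/\eps)-C'$, which is incompatible with (\ref{easy_upper_bound}) once $\eps$ is small: the two together give $\frac{L_0^2}{4\pi}\ln(1/\eps)\leq C''+O(\ln(1/\rho))$, a contradiction. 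Thus $a$ must lie in the interior of $\Omega$.

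The principal obstacle is the quantitative bad-ball localization $s_\eps=O(\eps)$: \lemref{lemma_one_sing} only asserts that the bad-ball centers converge to $a$ as $\eps\to 0$, whereas here a uniform rate is needed. One route is to re-run the merging procedure from the proof of \lemref{lemma_one_sing} at scale $\eps$ rather than at scale $R$, using the Pohozaev bound $\frac{1}{\eps^2}\int_\Omega W(u_\eps)\leq C$ of \lemref{bdd_potential_int} to control the spread. Alternatively one can bootstrap within the half-circle argument itself: if $s_\eps$ went to zero more slowly than $\eps$, combining the full-circle lower bound from the proof of \lemref{lemma_one_sing} on $B_{s_\eps}(a)$ with the half-circle lower bound above on $B_\rho(a)\cap\Omega\setminus B_{s_\eps}(a)$ would already outpace (\ref{easy_upper_bound}), yielding the same contradiction directly.
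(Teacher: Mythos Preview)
Your strategy is correct and the underlying geometric idea---that a boundary singularity forces the topological constraint onto arcs of half the angular extent, doubling the leading coefficient of the lower bound---is exactly the mechanism at work in the paper. However, the paper carries it out in a considerably simpler way, and the difference is worth noting.

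The paper does not work at the $\eps$-level at all. By this point in the argument the strong convergence $u_\eps\to u_0$ in $W^{1,2}(\Omega_L\setminus B_R(a))$ is already available (\lemref{convergence_strong_away_a}), so for almost every $r>0$ the limiting map $u_0$ is continuous and non-contractible on the \emph{full} circle $\partial B_r(a)\subset\Omega_L$. The extension $u_g$ to $\Omega_L\setminus\Omega$---set up at the start of the section precisely for moments like this---is smooth, so the portion of $\int_{\partial B_r(a)}\abs{\nabla u_0}$ lying outside $\Omega$ is $O(r)$. Cauchy--Schwarz on the remaining arc $\partial B_r(a)\cap\Omega$, whose length is at most $\tfrac{3\pi r}{2}$ for small $r$, then gives
\[
\int_{\partial B_r(a)\cap\Omega}\abs{\nabla u_0}^2\;\ge\;\frac{2L_0^2}{3\pi r}-C,
\]
and integrating in $r$ over $[\eta,r_0]$ yields a lower bound with leading constant $\tfrac{2L_0^2}{3\pi}>\tfrac{L_0^2}{4\pi}$, contradicting the weak-limit consequence of (\ref{upper_outer_ball}) as $\eta\to 0$.

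What this buys over your route: there is no need for a quantitative bad-ball radius $s_\eps=O(\eps)$, no half-circle adaptation of \lemref{Jerrard_form} or of Jerrard's $\lambda_\eps$ machinery, and no separate homotopy argument on $\overline\Omega\setminus B_s(a)$---the non-contractibility is inherited on full circles in $\Omega_L$, and the ``half-circle'' gain comes simply from the smoothness of the fixed extension outside $\Omega$. Your bootstrap alternative for the localization does close, but once one remembers that the limit $u_0$ and the extended domain $\Omega_L$ are already in hand, the whole $\eps$-level apparatus becomes unnecessary.
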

\begin{proof}
The proof of this lemma is along the lines of a similar result in in \cite{BBH}.  First, assume $a \in \partial \Omega$ and observe that, for almost every $r > 0$, the sequence $u_\eps \to u_0$ strongly in $W^{1,2}(\partial B_r(a);\cal P)$.  Hence, for almost every $r > 0$, the function $u_0$ is continuous on $\partial B_r(a)$ and non-contractible on $\partial B_r(a)$.  It follows that
$$
L_0 \leq \int_{\partial B_r(a)}\abs{\nabla u_0} = \int_{\partial B_r(a)\cap \Omega}\abs{\nabla u_0} + \int_{\partial B_r(a)\setminus \Omega}\abs{\nabla u_0}.
$$
However, $u_0$ is smooth outside $\overline{\Omega}$.  Hence there is a constant $C > 0$, independent of $r>0$, such that
$$
L_0 \leq \int_{\partial B_r(a)\cap \Omega}\abs{\nabla u_0} + CH^{(1)}(\partial B_r(a)\setminus \Omega).
$$
From here we obtain
$$
L_0^2 - CH^{(1)}(\partial B_r(a)\setminus \Omega) \leq H^{(1)}(\partial B_r(a)\cap \Omega)\int_{\partial B_r(a)\cap \Omega}\abs{\nabla u_0}^2.
$$
Then
\begin{equation}\label{est_half_ball}
\frac{L_0^2}{H^{(1)}(\partial B_r(a)\cap \Omega)}-C\frac{H^{(1)}(\partial B_r(a)\setminus \Omega)}{H^{(1)}(\partial B_r(a)\cap \Omega)}\leq \int_{\partial B_r(a)\cap \Omega}\abs{\nabla u_0}^2.
\end{equation}
Since $\Omega$ is smooth, there are constants $r_0> 0$ and $\alpha > 0$ such that
$$
\frac{3\pi}{2}\geq H^{(1)}(\partial B_r(a)\cap \Omega) \,\,\,\,\mbox{and}\,\,\,\, \frac{H^{(1)}(\partial B_r(a)\setminus \Omega)}{H^{(1)}(\partial B_r(a)\cap \Omega)}\leq \alpha
$$
for all $r \in ]0,r_0]$.  Integrating (\ref{est_half_ball}) over $[\eta,r_0]$, we obtain
$$
\frac{2L_0^2}{3\pi}\ln\left(\frac{r_0}{\eta}\right ) - C \leq \int_{\Omega \cap (B_{r_0}(a)\setminus B_\eta(a)}\abs{\nabla u_0}^2.
$$
For $\eta>0$ sufficiently small this contradicts (\ref{upper_outer_ball}), because $u_\eps \rightharpoonup u_0$ in $W^{1,2}(B_{r_0}(a)\setminus B_\eta(a);F_1)$.  It follows that $a$ is in the interior of $\Omega$.
\qed
\end{proof}

\section{Proof of \thmref{main_theo}}

We now turn our attention to the proof of \thmref{main_theo}:
\begin{proof}
We will divide the proof into four steps.  Throughout this proof $r$ will denote a positive number, and $r_a = r_a(x) = \abs{x-a}$ will denote the distance from $x$ to $a$.

{\em Step 1---Basic properties of $u_{0}$}. Because of the minimizing property of $u_0$ stated in \thmref{one_sing}, this map satisfies the equation
$$
-\Delta u_0 = 2(\abs{\nabla u_0}^2 u_0 - (u_{0,x}^2+ u_{0,y}^2) )
$$
in $\Omega \setminus \{a\}$.  Here $u_{0,x}, u_{0,y}$ denote the derivatives of $u_0$, and $u_{0,x}^2$ denotes the matrix $u_{0,x}$ multiplied by itself.  Observe that the right hand side of the equation that $u_0$ satisfies is normal to the tangent to $\cal P$ at $u_0(x)$ and that it commutes with $u_0$.  Let us define
$$
v(x) = \nabla u_0\cdot F,
$$
where $F = (f_1,f_2)$ is any vector field (with real entries) in $\R{2}$.  Taking the scalar product of $v$ with both sides of the equation satisfied by $u_0$, we obtain
$$
v(x) \cdot \Delta u_0 = 0.
$$
Following the standard Pohozaev trick we further obtain 
$$
{\rm div}\left ( \left ( \frac{\abs{\nabla u_0}^2}{2} I_2 - Du_0^TDu_0 \right )F \right ) = \left ( \frac{\abs{\nabla u_0}^2}{2} I_2 - Du_0^TDu_0 \right )\cdot DF.
$$
Here
$$
Du_0^TDu_0 = \left ( \begin{array}{cc} u_{0,x} \cdot u_{0,x} & u_{0,x} \cdot u_{0,y} \\ u_{0,x} \cdot u_{0,y} & u_{0,y} \cdot u_{0,y} \end{array} \right )
$$
is the first fundamental form of $u_0$, and $DF$ is the Jacobian matrix of $F$.  We choose now $0<r_0 \leq r_1$ with $B_{r_1}(a) \subset \subset \Omega$, Let $F(x) = x-a$, and integrate this last equation over $B_{r_1}(a) \setminus B_{r_0}(a)$.  We get
$$
r_1 \int_{\partial B_{r_1}(a)} \left ( \frac{\abs{\nabla u_0}^2}{2} - \abs{\nabla u_0 \cdot \nu}^2\right ) = r_0 \int_{\partial B_{r_0}(a)} \left ( \frac{\abs{\nabla u_0}^2}{2} - \abs{\nabla u_0 \cdot \nu}^2\right ).
$$
It then follows that the function
$$
\xi(r) = r \int_{\partial B_{r}(a)} \left ( \frac{\abs{\nabla u_0}^2}{2} - \abs{\nabla u_0 \cdot \nu}^2\right )
$$
is constant.  Denote this constant by $\lambda/2$, so that
\begin{equation}\label{poho_cons}
\int_{\partial B_{r}(a)} \abs{\nabla u_0 \cdot \tau}^2 = \frac{\lambda}{r} + \int_{\partial B_{r}(a)} \abs{\nabla u_0 \cdot \nu}^2.
\end{equation}
Integrating this last identity over $[r_0, r_1]$ we obtain
$$
\int_{B_{r_1}(a) \setminus B_{r_0}(a)} \abs{\nabla u_0}^2 = \lambda \ln\left ( \frac{r_1}{r_0}\right ) + 2\int_{B_{r_1}(a) \setminus B_{r_0}(a)} \abs{\nabla u_0 \cdot \nu}^2 \geq \lambda \ln\left ( \frac{r_1}{r_0}\right ).
$$
Considering (\ref{upper_outer_ball}), the fact that $u_\eps \rightharpoonup u_0$, and letting $r_0 \to 0$, we conclude that
$$
\lambda \leq \frac{L_0^2}{2\pi}.
$$
In fact, (\ref{upper_outer_ball}) gives us
\begin{align*}
\frac{L_0^2}{2\pi} \ln\left ( \frac{1}{r_0}\right ) +C &\geq \int_{B_{r_1}(a) \setminus B_{r_0}(a)} \abs{\nabla u_0}^2 \\ &= \int_{B_{r_1}(a) \setminus B_{r_0}(a)} \abs{\nabla u_0 \cdot \tau}^2 + \int_{B_{r_1}(a) \setminus B_{r_0}(a)} \abs{\nabla u_0 \cdot \nu}^2.
\end{align*}
Observing that
\begin{align*}
\int_{\partial B_r(a)}\abs{\nabla u_0\cdot \tau}^2 \geq \frac{1}{2\pi r}\left ( \int_{\partial B_r(a)}\abs{\nabla u_0\cdot \tau} \right )^2 \geq \frac{L_0^2}{2\pi r},
\end{align*}
we conclude that
$$
\int_{B_{r_1}(a) \setminus B_{r_0}(a)}\abs{\nabla u_0\cdot \nu}^2 \leq C
$$
for any $0 < r_0 \leq r_1$, where $C = C(r_1)$.  In light of (\ref{upper_outer_ball}) we then have
\begin{equation}\label{normal_part_integrable}
\int_{\Omega}\abs{\nabla u_0\cdot \nu}^2 < +\infty.
\end{equation}
Next we integrate (\ref{poho_cons}) with respect to $r$ over $[r_0, r_1]$ to obtain
$$
\frac{L_0^2}{2\pi} \ln\left ( \frac{r_1}{r_0}\right ) \leq \int_{B_{r_1}(a) \setminus B_{r_0}(a)}\abs{\nabla u_0\cdot \tau}^2 = \lambda \ln\left ( \frac{r_1}{r_0}\right )  +\int_{B_{r_1}(a) \setminus B_{r_0}(a)}\abs{\nabla u_0\cdot \nu}^2.
$$
Letting $r_0 \to 0$ in the last equation, we see through (\ref{normal_part_integrable}) that
$$
\lambda \geq \frac{L_0^2}{2\pi}.
$$
We conclude that
\begin{equation}\label{right_poho_constant}
\lambda = \frac{L_0^2}{2\pi}.
\end{equation}
Let now $r>0$ and pick $\rho = \rho(r) \in [\frac{r}{2},r]$ such that
\begin{equation}\label{choice_rho}
\int_{\partial B_{\rho(r)}(a)}\abs{\nabla u_0 \cdot \nu}^2 \leq \frac{2}{r}\int_{B_r(a) \setminus B_{\frac{r}{2}}(a)}\abs{\nabla u_0 \cdot \nu}^2.
\end{equation}
Since $\int_{\Omega}\abs{\nabla u_0\cdot \nu}^2 < +\infty$, we conclude that
$$
\lim_{r \to 0} \left ( \rho(r)\int_{\partial B_{\rho(r)}(a)}\abs{\nabla u_0 \cdot \nu}^2  \right ) = 0.
$$
Set
\begin{equation}\label{def_gamma_rho}
\gamma_r(\theta) = u_0(\rho(r),\theta).
\end{equation}
$\gamma_r$ represents a non-contractible curve in $\cal P$.  (\ref{poho_cons}) and $\lambda = \frac{L_0^2}{2\pi}$ give
$$
\int_0^{2\pi} \abs{\frac{d\gamma_r}{d\theta}}^2 = \frac{L_0^2}{2\pi} + \rho(r) \int_{\partial B_{\rho(r)}(a)}\abs{\nabla u_0 \cdot \nu}^2,
$$
from where we deduce that
$$
\lim_{r \to 0} \int_0^{2\pi} \abs{\frac{d\gamma_r}{d\theta}}^2 = \frac{L_0^2}{2\pi}.
$$
In particular, $\int_0^{2\pi} \abs{\frac{d\gamma_r}{d\theta}}^2$ is bounded in $r$.  Hence, we can choose a sequence $r_n \to 0$, as $n \to \infty$, and a curve $\gamma_0 \in W^{1,2}([0,2\pi];\cal P)$ such that $\gamma_n = \gamma_{r_n} \rightharpoonup \gamma_0$.  We have
$$
\frac{L_0^2}{2\pi}\leq \int_0^{2\pi} \abs{\frac{d\gamma_0}{d\theta}}^2 \leq \lim_{n \to \infty} \int_0^{2\pi} \abs{\frac{d\gamma_n}{d\theta}}^2 = \lim_{n \to \infty} \int_0^{2\pi} \abs{\frac{\partial u_0}{\partial \theta}(\rho(r_n), \theta)}^2\,d\theta  = \frac{L_0^2}{2\pi},
$$
which implies that $\gamma_0$ is a geodesic, and that $\gamma_n \to \gamma_0$ strong in $W^{1,2}([0,2\pi];\cal P).$
Let now
$$
\Lambda_n = \int_{\partial B_{\rho(r_n)}(a)} j(u_0)\cdot \tau.
$$
The convergences we just proved show that, as $n\to \infty$, $\Lambda_n \to \Lambda_0$, the anti-symmetric representative of $\gamma_0$.

\medskip
\medskip
\medskip

{\em Step 2---Analysis of $j(u_0)$.} Define now
$$
V(x) = j(u_0) - \frac{1}{2\pi r_a}\hat{\theta}_a\Lambda_0,
$$
where we interpret $\hat{\theta}_a\Lambda_0$ according to (\ref{vector_matrix_entries}).  The fact that $[u_0; \Delta u_0] = 0$ implies that
$$
{\rm div}(V) = 0.
$$
Observe that, for $R > r$, with the same choice of $\rho(r)$ we have made above, we know that
$$
\int_{\partial B_R(a)} V\cdot \nu = \int_{\partial B_{\rho(r)}(a)} V\cdot \nu =  \int_{\partial B_{\rho(r)}(a)} j(u_0) \cdot \nu \to 0 \,\,\,\, \mbox{as}\,\,\,\, r \to 0.
$$
Hence, there is a function $\psi_0 : \Omega \to M_a^3(\R{})$, such that $\psi_0 \in W^{1,2}(\Omega_{a,r}; M_a^3(\R{}))$ for all $r > 0$, and such that $V = \nabla ^\perp \psi_0$, that is,
$$
j(u_0) = \frac{1}{2\pi r_a}\hat{\theta}_a\Lambda_0 + \nabla^\perp \psi_0
$$
in almost all of $\Omega$.  As a matter of fact we have
$$
j(u_0) = \nabla^\perp \left ( -\frac{\ln(r_a)}{2\pi}\Lambda_0 + \psi_0 \right ).
$$
Denote
$$
\Psi_0 = -\frac{\ln(r_a)}{2\pi}\Lambda_0 + \psi_0 .
$$
Since $j(u_0) = \nabla^\perp \Psi_0$, we obtain
$$
-\Delta \Psi_0 = \nabla^\perp \cdot j(u_0) = -2\left [  \left [u_0; \frac{\partial u_0}{\partial x} \right ]; \left [u_0; \frac{\partial u_0}{\partial y}\right ]\right ] = -2\left [\frac{\partial \Psi_0}{\partial x}; \frac{\partial \Psi_0}{\partial y} \right ].
$$
In other words,
$$
\Delta \Psi_0 = 2\left [\frac{\partial \Psi_0}{\partial x}; \frac{\partial \Psi_0}{\partial y} \right ].
$$
The standard isomorphism of Lie Algebras between $\R{3}$ with the cross product and $M^3_a(\R{})$ with $[A;B]=AB-BA$, shows that this is the constant mean curvature (CMC) equation for $\Psi_0$ (see, for instance, \cite{riviere}).  Let us also observe that, by (\ref{upper_outer_ball}), for $r_1 > 0$, there is a constant $C > 0$, that depends only on $r_1$, such that
$$
\int_{\Omega \setminus B_{r_1}(a)} \abs{\nabla u_0}^2 = \int_{\Omega \setminus B_{r_1}(a)} \abs{j(u_0)}^2 = \int_{\Omega \setminus B_{r_1}(a)} \abs{\nabla \Psi_0}^2 \leq C.
$$
The proof of the regularity of the solutions of the CMC equation, as shown for instance in \cite{riviere}, shows then that $\Psi_0$ is smooth in $\Omega \setminus B_{r_1}(a)$.  However, it is easy to see that
$$
\int_\Omega \abs{\nabla \Psi_0}^2 = +\infty.
$$
In other words, we cannot apply the known results regarding regularity of solutions of the CMC equation to $\Psi_0$ in the whole of $\Omega$.

\medskip
\medskip
\medskip

{\em Step 3---Analysis of $\Psi_0$.} Now we show that
$$
\psi_0 = \Psi_0 + \frac{\ln(r_a)}{2\pi}\Lambda_0,
$$
satisfies $\psi_0 \in W^{1,2}(\Omega; M_a^3(\R{}))$, and also
$$
\Delta \psi_0 = 2\left [ \frac{\partial \psi_0}{\partial x}; \frac{\partial \psi_0}{\partial y}\right ] + \frac{1}{\pi r_a} \left [ \nabla \psi_0 \cdot \hat{\theta}_a ; \Lambda_0\right ].
$$
This, plus some extra work, will give us that $\psi_0 \in L^\infty(\Omega; M_a^3(\R{}))$.  To this end let us observe that
$$
j(u_0) = \frac{\hat{\theta}_a}{2\pi r_a}\Lambda_0 + \nabla^\perp \psi_0.
$$
In particular,
$$
\int_\Omega \abs{\nabla \psi_0 \cdot \hat{\theta}_a}^2 = \int_\Omega \abs{j(u_0)\cdot \hat{r}_a}^2 < +\infty.
$$
Next, a direct computation shows that
$$
\Delta \psi_0 = \nabla^\perp\cdot  j(\psi_0) + \frac{1}{\pi r_a} [\nabla \psi_0 \cdot \hat{\theta}_a; \Lambda_0].
$$
Dotting this equation into $\phi = \nabla \psi_0 \cdot B$, for $B(x) = x-a$, and following the standard Pohozaev's identity trick, we arrive at
\begin{align}
{\rm div}\left ( \left ( \frac{\abs{\nabla \psi_0}^2}{2} I_2 - D\psi_0^T D\psi_0 \right ) B \right ) &= -\frac{1}{\pi} \Lambda_0 \cdot [\nabla \psi_0\cdot \hat{r}_a; \nabla \psi_0 \cdot \hat{\theta}_a] \nonumber\\ 
&= -\frac{1}{2\pi}\Lambda_0 \cdot \left ( \nabla^\perp \cdot j(\psi) \right ) \label{poho_psi}.
\end{align}
Let us recall now from the end of Step 1, that we found a sequence $r_n \to 0$ as $n \to \infty$, with its corresponding $\rho(r_n)$ from (\ref{choice_rho}), such that 
$$
\gamma_n(\theta) = u_0(\rho(r_n), \theta) \to \gamma_0 \,\,\,\mbox{strongly in} \,\,\, W^{1,2}([0, 2\pi]; {\cal P}).
$$
We pick an arbitrary $r > 0$ with $\overline{B_r(a)}\subset \Omega$, any $n$ such that $r_n < r$, and integrate (\ref{poho_psi}) over $B_r(a) \setminus B_{\rho(r_n)}(a)$.  Since $\Lambda_0$ is constant, and writing $\rho_n$ instead of $\rho(r_n)$, we obtain
\begin{align}
r\int_{\partial B_{r}(a)} \left ( \frac{\abs{\nabla \psi_0}^2}{2}  - \abs{\nabla \psi_0 \cdot \nu}^2 \right ) &- \rho_n \int_{\partial B_{\rho_n}(a)} \left ( \frac{\abs{\nabla \psi_0}^2}{2}  - \abs{\nabla \psi_0 \cdot \nu}^2 \right ) \nonumber \\ 
&= -\frac{\Lambda_0}{2\pi} \cdot \int_{B_{r}(a) \setminus B_{\rho_n}(a)} \nabla^\perp \cdot j(\psi_0) \nonumber\\
&= -\frac{\Lambda_0}{2\pi} \cdot \int_{\partial (B_{r}(a) \setminus B_{\rho_n}(a))} j(\psi_0) \cdot \tau.\label{poho_for_psi_rho}
\end{align}
Let us recall now that
$$
j(u_0) = \frac{\hat{\theta}_a}{2\pi r_a} \Lambda_0 + \nabla^\perp \psi_0.
$$
This says that
\begin{align*}
\rho_n \int_{\partial B_{\rho_n}(a)} \abs{\nabla \psi_0\cdot \nu }^2 &= \rho_n \int_{\partial B_{\rho_n}(a)} \abs{\left ( j(u_0) - \frac{\hat{\theta}_a}{2\pi r_a} \Lambda_0\right ) \cdot \tau }^2 \\
&=  \int_0^{2\pi} \abs{\left [\gamma_n; \frac{d\gamma_n}{d\theta}\right ] - \frac{1}{2\pi} \Lambda_0 }^2 \to 0
\end{align*}
as $n \to \infty$.

Next, the definition of $j(u_0)$ shows that
$$
\rho_n\int_{\partial B_{\rho_n}(a)}\abs{\nabla \psi_0\cdot \tau}^2 = \rho_n\int_{\partial B_{\rho_n}(a)}\abs{\nabla u_0\cdot \nu}^2.
$$
By (\ref{choice_rho}),
$$
\rho_n\int_{\partial B_{\rho_n}(a)}\abs{\nabla \psi_0\cdot \tau}^2 \to 0
$$
 as $n \to \infty$.  

Now observe that, if $A\in M_a^3(\R{})$ is constant over $\partial B_{\rho_n}(a)$, then
$$
\int_{\partial B_{\rho_n}(a)} \left [ A; \nabla \psi_0 \cdot \tau \right ] = \left [ A; \int_{\partial B_{\rho_n}(a)} \nabla \psi_0 \cdot \tau \right ] = 0
$$
because $\partial B_{\rho_n}(a)$ is a closed curve.  We use this with $A = \psi_0^{\rho_n} = \psi_0(x_{\rho_n})$ for some fixed $x_{\rho_n} \in \partial B_{\rho_n}(a)$, to obtain
\begin{equation}\label{tangent_est}
\int_{\partial B_{\rho_n}(a)} j(\psi_0) \cdot \tau = \int_{\partial B_{\rho_n}(a)} \left [ \psi_0; \nabla \psi_0 \cdot \tau \right ] = \int_{\partial B_{\rho_n}(a)} \left [ \psi_0 - \psi_0^{\rho_n}; \nabla \psi_0 \cdot \tau \right ].
\end{equation}
Let now $x \in \partial B_{\rho_n}(a)$ and call $\Gamma(x_{\rho_n},x)$ a connected portion of $\partial B_{\rho_n}(a)$ that starts at $x_{\rho_n}$ and ends at $x$.  Obviously
$$
\psi_0(x) - \psi_0^{\rho_n} = \int_{\Gamma(x_{\rho_n},x)} \nabla\psi_0 \cdot \tau.
$$
Using this in (\ref{tangent_est}) we obtain
\begin{equation}\label{tangential_estimate_2}
\abs{\int_{\partial B_{\rho_n}(a)} j(\psi_0) \cdot \tau} \leq \left ( \int_{\partial B_{\rho_n}(a)} \abs{\nabla \psi_0 \cdot \tau} \right )^2 \leq 2\pi {\rho_n} \int_{\partial B_{\rho_n}(a)} \abs{\nabla \psi_0 \cdot \tau}^2.
\end{equation}
We conclude that
$$
\abs{\int_{\partial B_{\rho_n}(a)} j(\psi_0) \cdot \tau} \to 0
$$
as $n \to \infty$.  With all these facts we go back to (\ref{poho_for_psi_rho}) and let $n \to \infty$ to obtain
\begin{equation}\label{conseq_poho_0}
r\int_{\partial B_{r}(a)} \left  ( \frac{\abs{\nabla \psi_0}^2}{2}  - \abs{\nabla \psi_0 \cdot \nu}^2 \right ) = - \frac{\Lambda_0}{2\pi} \cdot \int_{\partial B_r(a)} j(\psi_0) \cdot \tau.
\end{equation}
This is valid for any $r > 0$ such that $\overline{B_r(a)} \subset \Omega$.

Observe now that the argument we used to obtain (\ref{tangential_estimate_2}) can be applied to any $r > 0$ with $\overline{B_r(a)}\subset \Omega$.  We use this on the right-hand side of (\ref{conseq_poho_0}) to obtain
$$
\int_{\partial B_r(a)} \abs{\nabla \psi_0\cdot \nu}^2 \leq C\int_{\partial B_r(a)}\abs{\nabla \psi_0 \cdot \tau}^2,
$$
where $C$ depends on $\Lambda_0$, but is independent of $r>0$.  We recall now that
$$
\int_{\Omega} \abs{\nabla u_0\cdot \nu}^2 < +\infty.
$$
This implies that
$$
\int_{B_r(a)}\abs{\nabla \psi_0 \cdot \tau}^2
$$
is finite, and hence
$$
\int_{B_r(a)}\abs{\nabla \psi_0}^2
$$
is also finite.

\medskip
\medskip
Once we know that $\psi_0 \in W^{1,2}(\Omega; M_a^3(\R{})),$ the last assertion of \thmref{main_theo} can be proved as follows. First notice that
$$
u_0\frac{\partial u_0}{\partial x}u_0 = u_0\frac{\partial u_0}{\partial y}u_0 = 0,
$$
because $u_0$ is $\cal P$-valued.  From here we obtain that
$$
\left [u_0; \frac{\partial u_0}{\partial x} \right ] = u_0 \left [u_0; \frac{\partial u_0}{\partial x} \right ] +  \left [u_0; \frac{\partial u_0}{\partial x} \right ]u_0,
$$
and the same holds for $ \frac{\partial u_0}{\partial y}$.  This last identity, the fact that
$$
j(u_0) = \frac{1}{2\pi r_a}(\Lambda_0 \hat{\theta}_a) + \nabla^\perp \psi_0,
$$
and some algebra show that
$$
Z_{u_0}(x) = \frac{1}{2\pi r_a}(\Lambda_0 - u_0\Lambda_0 - \Lambda_0 u_0)
$$
also satisfies
$$
Z_{u_0}(x) = -((\nabla^\perp \psi_0 \cdot \hat{\theta}_a) - u_0(\nabla^\perp \psi_0 \cdot \hat{\theta}_a) - (\nabla^\perp \psi_0 \cdot \hat{\theta}_a)u_0).
$$
Since $\psi_0 \in W^{1,2}(\Omega; M_a^3(\R{}))$, it follows that $Z_{u_0} \in L^2(\Omega; M_a^3(\R{}))$.
\medskip
\medskip
\medskip

{\em Step 4---Proof of the fact that $\psi_0 \in L^\infty(\Omega; M_a^3(\R{}))$.} From all we have done so far it is clear that $\psi_0$ is bounded, and in fact smooth, away from $a$.  All we need is to analyze $\psi_0$ near $a$.  To do this, recall that
$$
\int_{\Omega} \abs{\nabla \psi_0}^2
$$
is finite.  Hence, by adding a constant to $\psi_0$ we can also impose that
$$
\int_{\Omega} \abs{\psi_0}^2
$$
be finite.  In particular we have that
$$
\lim_{\delta \to 0} \int_{B_\delta(a)} (\abs{\nabla \psi_0}^2+\abs{\psi_0}^2) = 0.
$$
Now, for any $\delta > 0$ we can choose $\eta = \eta(\delta) \in [\delta/2, \delta]$ such that
\begin{align*}
\int_{\partial B_\eta(a)} (\abs{\nabla \psi_0}^2+\abs{\psi_0}^2) \leq \frac{2}{\delta}  \int_{(B_\delta\setminus B_{\delta/2})(a)} (\abs{\nabla \psi_0}^2+\abs{\psi_0}^2)
\end{align*}
Notice that
\begin{align*}
\left (\int_{\partial B_\eta(a)}(\abs{\nabla \psi_0} + \abs{\psi_0})\right )^2 \leq C \int_{(B_\delta\setminus B_{\delta/2})(a) } (\abs{\nabla \psi_0}^2+\abs{\psi_0}^2),
\end{align*}
and hence
$$
\lim_{\delta \to 0}  \int_{\partial B_\eta(a)}(\abs{\nabla \psi_0} + \abs{\psi_0}) = 0.
$$
Let us choose now $R > 0$ such that $B_{4R}(a) \subset \Omega$, and pick $b \in B_R(a)$, $b \neq a$.  Let also $\delta > 0$ be small, and set
$$
U_\delta = B_{2R}(b) \setminus ( B_\eta(a) \cup B_\eta(b)),
$$
where $\eta = \eta(\delta)$ is as explained before.  Call
$$
G(x,y) = \frac{1}{2\pi} \ln\left ( \frac{1}{\abs{x-y}}\right ),
$$
the fundamental solution of the Laplacian in the plane.  Observe now that, away from $a$, we have
\begin{align*}
\Delta \psi_0 &= 2\left [ \frac{\partial \psi_0}{\partial x}; \frac{\partial \psi_0}{\partial y}\right ] + \frac{1}{\pi r_a} \left [ \nabla \psi_0 \cdot \hat{\theta}_a ; \Lambda_0\right ] \\
&= \nabla^\perp \cdot  j(\psi_0) + {\rm div}\left ( \frac{\hat{\theta}_a}{\pi r_a} \left [\psi_0 ; \Lambda_0\right ] \right )
\end{align*}
Using Green's identity we obtain
\begin{align}
\int_{\partial U_\delta} (G(b,y) \nabla \psi_0 &- \psi_0 \nabla G(b,y))\cdot \nu \nonumber \\&= \int_{ U_\delta}G(b,y) \left ( \nabla^\perp \cdot j(\psi_0) + {\rm div}\left ( \frac{\hat{\theta}_a}{\pi r_a} \left [\psi_0 ; \Lambda_0\right ] \right )\right ) \nonumber  \\
&= - \int_{ U_\delta} \left ( \frac{\hat{\theta}_b}{2\pi r_b} \cdot j(\psi_0) - \frac{\hat{r}_b \cdot\hat{\theta}_a}{\pi^2 r_a r_b} \left [\psi_0 ; \Lambda_0\right ] \right )\nonumber  \\
& +\int_{ \partial U_\delta}G(b,y) \tau \cdot j(\psi_0) + \int_{ \partial U_\delta}\nu \cdot \frac{\hat{\theta}_a}{\pi r_a} \left [\psi_0 ; \Lambda_0\right ].\label{green_ident}
\end{align}
We intend to let $\delta \to 0$ in this last identity.  To do this, observe that
$$
 \int_{ \partial B_\eta(a) }\nu \cdot \frac{\hat{\theta}_a}{\pi r_a}\left [\psi_0 ; \Lambda_0\right ] = 0,
$$
and, since $\psi_0$ is smooth at $b$,
$$
\lim_{\delta \to 0} \int_{ \partial B_\eta(b) }\nu \cdot \frac{\hat{\theta}_a}{\pi r_a}\left [\psi_0 ; \Lambda_0\right ] = 0.
$$
Using again that $\psi_0$ is smooth away from $a$,we obtain
$$
\lim_{\delta \to 0} \int_{ \partial B_\eta(b) }G(b,y) \tau \cdot j(\psi_0) = 0.
$$
Now, if $\psi_0^\eta = \psi_0(x_\eta)$ for a fixed $x_\eta \in \partial B_\eta(a)$, and, for $x \in \partial B_\eta(a)$, $\Gamma(x_\eta, x)$ is the shortest portion of $\partial B_\eta(a)$ that starts in $x_\eta$ and ends in $x$, then
$$
\psi_0(x) - \psi_0^\eta = \int_{\Gamma(x_\eta,x)} \nabla \psi_0 \cdot \tau,
$$
so that
$$
\abs{\psi_0(x) - \psi_0^\eta} \leq \int_{\partial B_\eta(a)} \abs{\nabla \psi_0}.
$$
Writing
\begin{align*}
\int_{ \partial B_\eta(a)}G(b,y) \tau \cdot j(\psi_0) &= \int_{ \partial B_\eta(a)}(G(b,y)-G(b,a) \tau \cdot j(\psi_0) \\&+ G(b,a)\int_{ \partial B_\eta(a)} \tau \cdot j(\psi_0),
\end{align*}
we estimate
\begin{align*}
\abs{\int_{ \partial B_\eta(a)} \tau \cdot j(\psi_0)} &= \abs{\int_{ \partial B_\eta(a)} \left [ \psi_0 - \psi_0^\tau; \nabla \psi_0\cdot \tau\right ]} \\ &\leq \left ( \int_{ \partial B_\eta(a)} \abs{\nabla \psi_0}\right )^2 \leq 2\pi\eta \int_{ \partial B_\eta(a)} \abs{\nabla \psi_0}^2 \to 0,
\end{align*}
by the choice of $\eta$.  Again, this choice gives us that
$$
\abs{\int_{ \partial B_\eta(a)}(G(b,y)-G(b,a) \tau \cdot j(\psi_0)} \leq C(a,b)\delta \int_{\partial B_\eta(a)}\abs{\psi_0}\abs{\nabla \psi_0} \to 0
$$
as $\delta \to 0$ as well.

We notice next that, because $a \neq b$, $\frac{1}{r_ar_b} \in L^p(\Omega)$ for any $p < 2$.  Since $\psi_0 \in W^{1,2}(\Omega)$, we obtain that
$$
\frac{\hat{r}_b \cdot\hat{\theta}_a}{\pi^2 r_a r_b} \left [\psi_0 ; \Lambda_0\right ]  \in L^1(\Omega).
$$
Observe now that $G(b,y)$ is smooth near $a$, while $\psi_0$ is smooth near $b$.  Because of this, and the choice of $\eta$, we conclude that
$$
\lim_{\delta \to 0}\int_{\partial B_\eta(a)} G(b,y) \abs{\nabla \psi_0} = \lim_{\delta \to 0}\int_{\partial B_\eta(b)} G(b,y) \abs{\nabla \psi_0} = 0.
$$
Using again that $G(b,y)$ is smooth near $a$ and $\psi_0$ is smooth near $b$ we conclude that
$$
\lim_{\delta \to 0}\int_{\partial B_\eta(a)} \psi_0 \nabla G(b,y))\cdot \nu = 0,
$$
and
$$
\lim_{\delta \to 0}\int_{\partial B_\eta(b)} \psi_0 \nabla G(b,y))\cdot \nu = -\psi(b).
$$
Now we observe that, for $r > \eta > 0$ and $r$ small but fixed,
$$
\int_{(B_r \setminus B_\eta)(b)}\frac{\hat{\theta}_b}{2\pi r_b} \cdot j(\psi_0) = \int_\eta^r \frac{1}{2\pi r_b} \left ( \int_{\partial B_s(b)} j(\psi_0)\cdot \hat{\theta}_b \right ) ds.
$$
By the argument we gave to go from (\ref{tangent_est}) to (\ref{tangential_estimate_2}), this integral is bounded by $\int_\Omega \abs{\nabla \psi_0}^2$.  Putting all of this together in (\ref{green_ident}), as $\delta \to 0$ we obtain
\begin{align}
\psi_0(b) &= -\int_{\partial B_{2R}(b)} (G(b,y) \nabla \psi_0 - \psi_0 \nabla G(b,y))\cdot \nu \nonumber \\
&  -\int_{B_{2R}(b)} \left ( \frac{\hat{\theta}_b}{2\pi r_b} \cdot j(\psi_0) + \frac{\hat{r}_b \cdot\hat{\theta}_a}{\pi^2 r_a r_b} \left [\psi_0 ; \Lambda_0\right ] \right )\nonumber  \\
&+  \int_{ \partial B_{2R}(b)}G(b,y) \tau \cdot j(\psi_0) + \int_{ \partial B_{2R}(b)}\nu \cdot \frac{\hat{\theta}_a}{\pi r_a} \left [\psi_0 ; \Lambda_0\right ].\label{pre_bi_polar}
\end{align}
By the choice of $R$, because $b \in B_R(a)$ and because $\psi_0$ is smooth away from $a$, the boundary integrals above are uniformly bounded in $b$.  Next, we already mentioned that the term
$$
\int_{B_{2R}(b)} \frac{\hat{\theta}_b}{2\pi r_b} \cdot j(\psi_0)
$$
can be bounded by 
$$
\int_\Omega \abs{\nabla \psi_0}^2.
$$
Finally we need to estimate
$$
\int_{B_{2R}(b)} \frac{\hat{r}_b \cdot\hat{\theta}_a}{r_a r_b} \left [\psi_0 ; \Lambda_0\right ].
$$
For simplicity let us consider the case $b = (0,0)$, and $a=(\lambda,0)$.  Then
$$
\hat{r}_b = \frac{1}{r_b}(x,y)\,\,\,\mbox{and}\,\,\, \hat{\theta}_a = \frac{1}{r_a}(-y,x-\lambda),
$$
so that
$$
\int_{B_{2R}(b)} \frac{\hat{r}_b \cdot\hat{\theta}_a}{r_a r_b} \left [\psi_0 ; \Lambda_0\right ] = -\lambda \int_{B_{2R}(b)} \frac{y}{r_a^2 r_b^2} \left [\psi_0 ; \Lambda_0\right ].
$$
Write now $B_{2R}^+(b) = \{(x,y) \in B_{2R}(b) : y \geq 0\}$.  Then
\begin{align*}
\int_{B_{2R}(b)} \frac{\hat{r}_b \cdot\hat{\theta}_a}{r_a r_b} \left [\psi_0 ; \Lambda_0\right ] = &-\lambda \int_{B_{2R}^+(b)} \frac{y}{r_a^2 r_b^2} \left ( \left [\psi_0(x,y) ; \Lambda_0\right ]- \left [\psi_0(x,-y) ; \Lambda_0\right ] \right ) \\
= &-\lambda \int_{B_{2R}^+(b)} \frac{y}{r_a^2 r_b^2} \left ( \left [\psi_0(x,y) ; \Lambda_0\right ]- \left [\psi_0(x,0) ; \Lambda_0\right ] \right ) \\& -\lambda \int_{B_{2R}^+(b)} \frac{y}{r_a^2 r_b^2} \left ( \left [\psi_0(x,-y) ; \Lambda_0\right ]- \left [\psi_0(x,0) ; \Lambda_0\right ] \right ).
\end{align*}
We estimate the first of these two integrals, as the second obviously can be estimated in the same manner.  To do this observe that
\begin{align*}
&-\lambda \int_{B_{2R}^+(b)} \frac{y}{r_a^2 r_b^2} \left ( \left [\psi_0(x,y) ; \Lambda_0\right ]- \left [\psi_0(x,0) ; \Lambda_0\right ] \right ) \\ = &-\lambda \int_{-2R}^{2R} \int_{0}^{\sqrt{2R^2-x^2}} \frac{y}{r_a^2 r_b^2}  \left [\int_{0}^y\frac{\partial \psi_0}{\partial y}(x,s)ds ; \Lambda_0\right ]\,dy\,dx \\
= &-\lambda \int_{-2R}^{2R}  \int_{0}^{\sqrt{4R^2-x^2}} \left [\frac{\partial \psi_0}{\partial y}(x,s); \Lambda_0\right ]\int_{s}^{\sqrt{4R^2-x^2}}  \frac{y}{r_a^2 r_b^2}  dy\, ds\,dx
\end{align*}
Note next that
\begin{align*}
\int_{s}^{\sqrt{4R^2-x^2}}  &\frac{y}{r_a^2 r_b^2}  = \int_{s}^{\sqrt{4R^2-x^2}} \frac{y}{((x-\lambda)^2 + y^2)(x^2+y^2)} \\
&= \int_{s}^{\sqrt{4R^2-x^2}} \left (  \frac{1}{x^2 + y^2}  -  \frac{1}{(x-\lambda)^2+y^2}  \right ) \frac{y\,dy}{\lambda^2 - 2\lambda x} \\
&=\frac{1}{2(\lambda^2 -2\lambda x)}\left (  \ln\left(\frac{4R^2}{x^2+s^2}\right) -  \ln\left(\frac{\lambda^2-2\lambda x+4R^2}{(x-\lambda)^2+s^2}\right)  \right ) \\
&= \frac{1}{2(\lambda^2 -2\lambda x)}\left (  \ln\left(\frac{(x-\lambda)^2+s^2}{x^2+s^2}\right) -  \ln\left(\frac{\lambda^2-2\lambda x+4R^2}{4R^2}\right)  \right )
\end{align*}
Let us recall now the elementary estimate $\abs{\ln(1+t)}\leq C\abs{t}$, valid for $\abs{t}\leq 1/2$.  This says that, if $\abs{\lambda^2-2x \lambda} \leq 2R^2$, then
$$
\abs{\frac{1}{2(\lambda^2 -2\lambda x)} \ln\left(\frac{\lambda^2-2\lambda x+4R^2}{4R^2}\right)} \leq \frac{C}{R^2}
$$
On the other hand, if $\abs{\lambda^2-2x \lambda} \geq 2R^2$, then
$$
\abs{\frac{1}{2(\lambda^2 -2\lambda x)} \ln\left(\frac{\lambda^2-2\lambda x+4R^2}{4R^2}\right)} \leq \frac{1}{2R^2}\abs{ \ln\left(\frac{\lambda^2-2\lambda x+4R^2}{4R^2}\right)}.
$$
All this shows that
\begin{align*}
&\abs{\frac{1}{2(\lambda^2 -2\lambda x)} \int_{-2R}^{2R}  \int_{0}^{\sqrt{4R^2-x^2}} \left [\frac{\partial \psi_0}{\partial y}(x,s); \Lambda_0\right ] \ln\left(\frac{\lambda^2-2\lambda x+4R^2}{4R^2}\right)} \\
&\leq \frac{C}{R^2}\int_{B_{2R}} \abs{\frac{\partial \psi_0}{\partial y}(x,s)}  \max\left \{1; \abs{ \ln\left(\frac{\lambda^2-2\lambda x+4R^2}{4R^2}\right)}\right \},
\end{align*}
and the last integral is controlled by $\int_\Omega \abs{\nabla \psi_0}^2$.

We are left only with the integral
$$
I =  \int_{B_{2R}}\left [\frac{\partial \psi_0}{\partial y}(x,s); \Lambda_0\right ]  \frac{\lambda}{(\lambda^2 -2\lambda x)} \ln\left(\frac{(x-\lambda)^2+s^2}{x^2+s^2}\right)  \, ds\,dx.
$$
Now by Holder's inequality
$$
\abs{I} \leq C\left ( \int_{B_{2R}} \abs{\nabla \psi_0}^2 \int_{B_{2R}}\frac{\lambda^2}{(\lambda^2 -2\lambda x)^2} \ln^2\left(\frac{(x-\lambda)^2+s^2}{x^2+s^2}\right) \right )^{1/2}.
$$
We need then to estimate
\begin{align*}
\int_{B_{2R}}\frac{\lambda^2}{(\lambda^2 -2\lambda x)^2} \ln^2\left(\frac{(x-\lambda)^2+s^2}{x^2+s^2}\right) & \leq \int_{\R{2}}\frac{1}{(\lambda -2 x)^2} \ln^2\left(\frac{(x-\lambda)^2+s^2}{x^2+s^2}\right)  \\
&= \frac{1}{4} \int_{\R{2}}\frac{1}{x^2} \ln^2\left(\frac{(x-\frac{\lambda}{2})^2+s^2}{(x+\frac{\lambda}{2})^2+s^2}\right) 
\end{align*}
A simple way to see that this last integral is finite is to introduce bipolar coordinates, with poles at $(-\frac{\lambda}{2},0)$ and $(\frac{\lambda}{2},0)$.  These coordinates are
$$
\tau =\ln\left(\frac{(x-\frac{\lambda}{2})^2+s^2}{(x+\frac{\lambda}{2})^2+s^2}\right)
$$
and the angle $(-\frac{\lambda}{2},0)-(x,y)-(\frac{\lambda}{2},0)$, that we denote $\sigma$.  With these coordinates,
$$
x = \frac{\lambda \sinh(\tau)}{\cosh(\tau) - \cos(\sigma)}
$$
and
$$
dxds = \frac{\lambda^2}{(\cosh(\tau) - \cos(\sigma))^2}.
$$
Then
$$
 \int_{\R{2}}\frac{1}{x^2} \ln^2\left(\frac{(x-\frac{\lambda}{2})^2+s^2}{(x+\frac{\lambda}{2})^2+s^2}\right)  = \int_0^{2\pi}\int_{-\infty}^\infty \frac{\tau^2}{\sinh^2(\tau)}d\tau\,d\sigma = 2\pi \int_{-\infty}^\infty \frac{\tau^2}{\sinh^2(\tau)}d\tau,
 $$
 which is clearly finite.

\qed
\end{proof}

\bibliographystyle{plain}
\bibliography{tensor_nematic}

\section*{Appendix}

Here we provide an alternative argument to Lemma \ref{proj_lemma} demonstrating that the critical points of a slightly more general version of $E_\eps$ take values in the convex hull of $\cal P$ as long as their boundary data is in $\cal P$. 
\begin{lemma}
\label{max_princ_lemma}
For any critical point $u_\eps \in W^{1,2}(\Omega; F_1)$ of the functional 
$$E_\eps^\beta(u) = \int_\Omega \left ( \frac{\abs{\nabla u}^2}{2} + \frac{W_\beta(u)}{\eps^2}\right ),$$
such that $u|_{\partial\Omega}\in\cal P$, the function $u_\eps$ takes values in the convex hull of $\cal P$.
\end{lemma}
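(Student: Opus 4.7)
The plan is to apply a maximum principle to the smallest eigenvalue $\lambda_3(u_\eps(\cdot))$ of $u_\eps$. First I would observe that the closed convex envelope $\Sigma$ of $\cal P$ inside $F_1$ coincides with $\{u\in F_1:\lambda_3(u)\geq 0\}$: the three eigenvalues of any $u\in F_1$ sum to $1$, so requiring them to lie in $[0,1]$ is the same as requiring the smallest to be nonnegative. On $\partial\Omega$ the boundary datum lies in $\cal P$, which has eigenvalues $1,0,0$, so $\lambda_3\circ u_\eps$ vanishes there. I would then argue by contradiction: suppose $m:=\min_{\overline\Omega}\lambda_3\circ u_\eps<0$ and pick an interior minimizer $x_0\in\Omega$.

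Next I would set up the Euler--Lagrange equation for the constrained problem. Because $F_1$ is an affine subspace cut out by $\tr u=1$, the equation reads $\Delta u_\eps=\eps^{-2}G(u_\eps)$ with $G(u):=\nabla W_\beta(u)-\tfrac{1}{3}(\tr\nabla W_\beta(u))\,I$ the projection of $\nabla W_\beta(u)$ onto the trace-free symmetric matrices. Using $\nabla I_2(u)=-u$ together with the Cayley--Hamilton identity $\mathrm{adj}(u)=u^2-u+I_2(u)I$ (valid for $u\in F_1$), one computes $\nabla W_\beta(u)=-4 I_2(u) u-\beta\,\mathrm{adj}(u)$ and $\tr\nabla W_\beta(u)=-(4+\beta) I_2(u)$, producing an explicit polynomial formula for $G(u)$.

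Now I would pick a unit $\lambda_3$-eigenvector $e_0$ of $u_\eps(x_0)$. The smooth scalar $f(x):=\langle e_0,u_\eps(x)e_0\rangle$ dominates $\lambda_3(u_\eps(x))$ pointwise, with equality at $x_0$, so $f$ also attains a local minimum at $x_0$. Hence $\Delta f(x_0)=\langle e_0,\Delta u_\eps(x_0)e_0\rangle\geq 0$, and the Euler--Lagrange equation gives $\langle e_0,G(u_\eps(x_0))e_0\rangle\geq 0$.

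The main obstacle is the remaining algebraic step: I need to show $\langle e_0,G(u)e_0\rangle<0$ whenever $u\in F_1$ has $\lambda_3(u)<0$ and $\beta$ is in the admissible range. Substituting eigenvalues into the formula above, the quantity equals
$$
\beta\lambda_3(1-\lambda_3)+I_2(u)\left(\frac{4-2\beta}{3}-4\lambda_3\right),
$$
a polynomial in $\lambda_1,\lambda_2,\lambda_3$ subject to $\lambda_1+\lambda_2+\lambda_3=1$ and $\lambda_1\geq\lambda_2\geq\lambda_3$. The first summand is plainly negative when $\lambda_3<0$. For the second I would use $I_2(u)=\lambda_1\lambda_2+\lambda_3(1-\lambda_3)$ and the AM--GM bound $\lambda_1\lambda_2\leq\bigl((1-\lambda_3)/2\bigr)^2$ to control $I_2$ in terms of $\lambda_3$ alone. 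In the clean case $\beta=2$ the inequality collapses to $(1-\lambda_3)(1-3\lambda_3)>0$, which is immediate; for other admissible values of $\beta$ a short case analysis splitting on the sign of $I_2$ and the magnitude of $|\lambda_3|$ closes the argument. The regularity of $u_\eps$ required to justify the pointwise Laplacian identity follows from standard elliptic regularity applied to the Euler--Lagrange system.
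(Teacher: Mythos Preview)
Your approach is correct and closely related to the paper's, but with a genuine simplification. The paper proceeds in two steps: first it shows $|u|\leq 1$ via the maximum principle applied to $\alpha=\tfrac12|u|^2$ (valid for $\beta\leq 8$), and then, using $|u|\leq 1$, it shows $\langle u;P\rangle\geq 0$ for every fixed $P\in\mathcal P$ by deriving a differential inequality for $\xi=1-\langle u;P\rangle$ and arguing subharmonicity on $\{\xi>1\}$. Your argument collapses these into a single pointwise step: you choose $P=e_0\otimes e_0$ to be the $\lambda_3$-eigenprojection at the interior minimum of $\lambda_3\circ u_\eps$, and you verify the algebraic inequality $\langle e_0,G(u)e_0\rangle<0$ directly, without ever invoking $|u|\leq 1$. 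This is a real economy.

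Your formula
\[
\langle e_0,G(u)e_0\rangle=\beta\lambda_3(1-\lambda_3)+I_2(u)\Bigl(\tfrac{4-2\beta}{3}-4\lambda_3\Bigr)
\]
is correct, and the sketch for $\beta=2$ is fine. For general $\beta$ the case split you describe needs a small correction: you should split on the sign of the \emph{coefficient} $c=\tfrac{4-2\beta}{3}-4\lambda_3$, not on the sign of $I_2$. When $c\geq 0$, bound $I_2$ from above by $\tfrac14(1-\lambda_3)(1-3\lambda_3)$ via AM--GM; when $c<0$, bound $I_2$ from below by $-\lambda_3(2+3\lambda_3)$, which is the value at the extremal configuration $\lambda_2=\lambda_3$. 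In both cases one obtains, after factoring, a quantity that is strictly negative for $\lambda_3<0$ precisely when $2\leq\beta\leq 8$, matching the range the paper obtains. Outside this range the pointwise inequality genuinely fails (e.g.\ near $\lambda_1=\lambda_2$, $\lambda_3$ slightly negative for small $\beta$), so you should state the hypothesis on $\beta$ explicitly.
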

\begin{proof} We divide the proof into two steps.

{\em Step 1.} The potential $W_\beta$ defined in \eqref{wisborn} can be written as
$$
W_\beta(u)=\frac{1}{2}(1-\abs{u}^2)^2 - \frac{\beta}{6}(1-3\abs{u}^2+2{\rm tr}(u^3)),
$$
where ${|u|}^2=\tr{u^2}$. The gradient of the potential is
$$
(\nabla_u W_\beta)(u) = 2(\abs{u}^2-1)u + \beta(u-u^2),
$$
and its trace is
$$
{\rm tr}(\nabla_u W_\beta)(u)  = (\beta-2)(1-\abs{u}^2).
$$
We first compute the inner product
$$
\langle u; (\nabla_u W_\beta)(u) \rangle = 2(\abs{u}^2-1)\abs{u}^2 + \beta(\abs{u}^2 - {\rm tr}(u^3)), 
$$
and rewrite this expression as follows
$$
\langle u; (\nabla_u W_\beta)(u) \rangle = 2(\abs{u}^2-1)^2 + 2(\abs{u}^2-1) + \beta(\abs{u}^2 - {\rm tr}(u^3)).
$$
Then
\begin{align*}
\langle u; (\nabla_u W_\beta)(u) \rangle &= 2(\abs{u}^2-1)^2  - \frac{\beta}{2}(1-3\abs{u}^2+2{\rm tr}(u^3))\\
&+ (\abs{u}^2-1)\left ( 2 - \frac{\beta}{2}\right ).
\end{align*}
Comparing this with the definition of $W_\beta$ we obtain
$$
\langle u; (\nabla_u W_\beta)(u) \rangle = 4W_{\frac{3\beta}{4}} + (\abs{u}^2-1)\left ( 2 - \frac{\beta}{2}\right ).
$$
For critical points of 
$$E_\eps^\beta(u) = \int_\Omega \left ( \frac{\abs{\nabla u}^2}{2} + \frac{W_\beta(u)}{\eps^2}\right )$$
in $F_1$, we have
$$
-\Delta u + \frac{1}{\eps^2}(\nabla_u W_\beta)(u) = \lambda I_3,
$$
where the Lagrange multiplier 
$$
\lambda = \frac{(\beta-2)(1-\abs{u}^2)}{3}.
$$
Now set
$$
\alpha:=\frac{\abs{u}^2}{2}.
$$
The computations above show that
\begin{align*}
\Delta \alpha &= \abs{\nabla u}^2 + \langle u ; (\nabla_u W_\beta)(u)\rangle - \frac{(\beta-2)(1-\abs{u}^2)}{3} \\
&= \abs{\nabla u}^2 + \frac{1}{\eps^2}\left ( 4W_{\frac{3\beta}{4}} + (\abs{u}^2-1)\left (2+\frac{\beta -2}{3}-\frac{\beta}{2}\right ) \right ).
\end{align*}
In other words,
$$
\Delta \alpha =  \abs{\nabla u}^2 + \frac{1}{\eps^2}\left ( 4W_{\frac{3\beta}{4}} + \frac{8-\beta}{6}(\abs{u}^2-1) \right ).
$$
A standard maximum principle argument then shows that $\abs{u}\leq 1$ when $\beta \leq 8$.

\medskip
\medskip
\medskip

{\em Step 2.} We now follow the same line of reasoning for $\tau = \langle u; P\rangle$, where $P \in \cal P$ is a constant projection matrix.  We observe that
$$
\Delta \tau = \langle \Delta u; P\rangle = \frac{1}{\eps^2}\left ( 2(\abs{u}^2-1)\tau + \beta \tau -\beta \langle u^2;P\rangle - \frac{(\beta -2)(1-\abs{u}^2)}{3}  \right ).
$$
If we write $u$ in terms of its eigenvalues and eigenvectors as $u = \sum_{k=1}^n \lambda_kP_k$, then $\sum_{k=1}^3 \langle P;P_k\rangle = 1$ and $\langle P;P_k\rangle\geq 0$ for all $k=1, \ldots, 3$.  By Jensen's inequality
$$
\tau^2 = \left ( \sum_{k=1}^3 \lambda_k \langle P_k;P\rangle \right )^2 \leq \sum_{k=1}^3 \lambda_k^2\langle P_k;P\rangle = \langle u^2; P\rangle.
$$
Let us assume now $8\geq\beta > 2$.  Then $\abs{u}\leq 1$ from Step 1, and from the last inequality we obtain 
$$
\Delta \tau \leq \frac{\beta}{\eps^2}    \left ( 1-\frac{2}{\beta}(1-\abs{u}^2) - \tau  \right )\tau.
$$
Set $\xi = 1-\tau$.  Then $\xi$ has
$$
\Delta \xi \geq (\xi-1)\left ( \xi -\frac{2}{\beta}(1-\abs{u}^2)\right ).
$$
In particular, if $\Omega_> = \{x\in \Omega: \xi(x)>1\}$, the function $\xi$ is subharmonic in $\Omega_>$.  Since on $\partial \Omega$ we have $\xi = 1-\langle u;P\rangle \leq 1$, if $\Omega_>$ were nonempty (and also open), $\xi$ would attain a maximum in its interior.  By the maximum principle $\xi$ would be constant in $\Omega_>$---a contradiction.  We conclude that $\xi \leq 1$ in $\Omega$.

Since $\xi = 1-\langle u;P\rangle \leq 1$ in $\Omega$, we conclude that
$$
\langle u;P\rangle \geq 0
$$
for all $P \in \cal P$ fixed.  Therefore, when $\beta > 2$, the critical points $u_\eps$ of $E_\eps$ have $\lambda_3\geq 0$ and $u_\eps$ takes values in the convex hull of $\cal P$.
\qed
\end{proof}

\end{document}